\documentclass[11pt]{article}

\usepackage[letterpaper, margin=1in]{geometry}
\usepackage[utf8]{inputenc}
\usepackage[T1]{fontenc}
\usepackage[hypertexnames=false]{hyperref}
\usepackage{color}
\hypersetup{colorlinks=true,citecolor=blue, linkcolor=blue, urlcolor=blue}
\usepackage{url}
\usepackage{booktabs}
\usepackage{amsfonts}
\usepackage{nicefrac}
\usepackage{amsmath, amsthm, amssymb}
\usepackage{microtype}
\ifxetex 
  \usepackage[libertine]{newtxmath}
\else
  \usepackage{newtxmath}
\fi
\usepackage[tt=false]{libertine} 
\usepackage{xcolor}

\usepackage{float}
\usepackage{graphicx}
\usepackage{makecell}
\usepackage{enumitem}
\usepackage{tikz}
\usepackage{aliascnt}
\usetikzlibrary{calc}

\newtheorem{theorem}{Theorem}
\newaliascnt{observation}{theorem}

\aliascntresetthe{observation}
\newaliascnt{claim}{theorem}

\aliascntresetthe{claim}
\newtheorem*{claim*}{Claim}
\newaliascnt{condition}{theorem}

\aliascntresetthe{condition}
\newaliascnt{example}{theorem}

\aliascntresetthe{example}
\newaliascnt{fact}{theorem}

\aliascntresetthe{fact}
\newaliascnt{lemma}{theorem}
\newtheorem{lemma}[lemma]{Lemma}
\aliascntresetthe{lemma}
\newaliascnt{proposition}{theorem}

\aliascntresetthe{proposition}
\newaliascnt{corollary}{theorem}
\newtheorem{corollary}[corollary]{Corollary}
\aliascntresetthe{corollary}
\theoremstyle{definition}

\newaliascnt{definition}{theorem}
\newtheorem{definition}[definition]{Definition}
\aliascntresetthe{definition}
\newaliascnt{remark}{theorem}

\aliascntresetthe{remark}
\newtheorem*{remark*}{Remark}
\newtheorem{assumption}{Assumption}

\usepackage{cleveref}
\crefname{theorem}{Theorem}{Theorems}
\Crefname{theorem}{Theorem}{Theorems}
\crefname{lemma}{Lemma}{Lemmas}
\Crefname{lemma}{Lemma}{Lemmas}
\crefname{corollary}{Corollary}{Corollaries}
\Crefname{corollary}{Corollary}{Corollaries}
\crefname{definition}{Definition}{Definitions}
\Crefname{definition}{Definition}{Definitions}
\crefname{proposition}{Proposition}{Propositions}
\Crefname{proposition}{Proposition}{Propositions}
\crefname{fact}{Fact}{Facts}
\Crefname{fact}{Fact}{Facts}
\crefname{remark}{Remark}{Remarks}
\Crefname{remark}{Remark}{Remarks}
\crefname{condition}{Condition}{Conditions}
\Crefname{condition}{Condition}{Conditions}
\crefname{observation}{Observation}{Observations}
\Crefname{observation}{Observation}{Observations}
\crefname{assumption}{Assumption}{Assumptions}
\Crefname{assumption}{Assumption}{Assumptions}
\crefname{appendix}{Appendix}{Appendices}
\Crefname{appendix}{Appendix}{Appendices}

\usepackage[linesnumbered,boxed,ruled,vlined]{algorithm2e}

\newcommand{\Erdos}{Erd\H{o}s\xspace}
\newcommand{\Renyi}{R\'enyi\xspace}
\def\*#1{\boldsymbol{#1}}
\def\+#1{\mathcal{#1}}
\def\-#1{\mathrm{#1}}
\def\^#1{\mathbb{#1}}
\def\!#1{\mathfrak{#1}}

\newcommand{\tuple}[1]{\left(#1\right)}

\newcommand{\tp}{\tuple}

\newcommand{\abs}[1]{\left\vert#1\right\vert}

\DeclareMathOperator*{\argmax}{arg\,max}

\def\oE{\mathbb{E}}
\newcommand{\E}[2][]{ \ifthenelse{\isempty{#1}}
  {\oE\left[#2\right]}
  {\oE_{#1}\left[#2\right]} }

\DeclareMathOperator*{\oVar}{\mathbf{Var}}
\newcommand{\Var}[2][]{ \ifthenelse{\isempty{#1}}
  {\oVar\left[#2\right]}
  {\oVar_{#1}\left[#2\right]} }

\def\oEnt{\mathbf{Ent}}
\newcommand{\Ent}[2][]{ \ifthenelse{\isempty{#1}}
  {\oEnt\left[#2\right]}
  {\oEnt_{#1}\left[#2\right]} }

\newcommand{\mmse}[0]{\mathrm{MMSE}}
\newcommand{\snr}[0]{\mathrm{snr}}

\newcommand{\Dkl}{D_{\mathrm{KL}}\left( \+P \| \+Q \right)}
\newcommand{\Dalpha}{D_\alpha\left( \+P \| \+Q \right)}

\title{Recovery thresholds for hidden weighted sparse graphs
\thanks{Accepted for presentation at the Conference on Learning Theory (COLT) 2026.}
}

\usepackage{authblk}

\author[1]{Zhe Hou}
\author[1]{Jingcheng Liu}

\affil[1]{State Key Laboratory for Novel Software Technology, Nanjing University}

\affil[ ]{\texttt{houzhe@smail.nju.edu.cn}, \texttt{liu@nju.edu.cn}}

\date{}

\begin{document}

\maketitle

\begin{abstract}

Recovering structural information from noisy high-dimensional data is a fundamental task in statistical inference. We investigate the information-theoretic recovery thresholds for a graph hidden in a randomly weighted complete graph.
Specifically, an unknown graph $H^* \in \mathcal H_n$ is chosen uniformly at random, and hidden in a complete graph of $n$ vertices through edge weights: for an edge $e \in H^*$, its weight is distributed independently according to $\mathcal{P}_n$; otherwise the edge weight is distributed independently according to $\mathcal{Q}_n$. The goal is to recover the hidden edge set from these noisy observations. Our primary focus on \emph{almost exact recovery}, where all but a vanishing fraction of the edges of $H^*$ must be identified.
By choosing $\mathcal{P} = \mathrm{Bern}(1), \mathcal{Q} = \mathrm{Bern}(q)$, this model captures the well-studied planted \Erdos-\Renyi recovery setting, and other weighted formulations such as Gaussian and Exponential distributions.

Assuming a local Lipschitzness of the \Renyi divergence between distributions $\mathcal{P}_n$ and $\mathcal{Q}_n$, and a mild density condition for the graphs $\mathcal H_n$, we give a unified characterization of the information-theoretic limit for recovering almost all of $H$ (also known as almost exact recovery).

Our characterization uses the KL divergence $D_{\mathrm{KL}}(\mathcal{P}_n\|\mathcal{Q}_n)$ as the signal-to-noise metric. We show that there is a critical threshold $D_c$ governed by the logarithm of the first moment threshold of $\mathcal H_n$ in the \Erdos-\Renyi random graph model $G(n,p)$.
For any constant $\eta$, if $D_{\mathrm{KL}} \ge (1+\eta)D_c$, the maximum likelihood estimator (MLE) achieves almost exact recovery; conversely if $D_{\mathrm{KL}} \le (1-\eta)D_c$, almost exact recovery is information-theoretically impossible. Our MLE analysis is based on an extension of~\cite{ding2020consistent} ;
While our lower bound involves deriving a distance-based Fano-type inequality based on Sibson's $\alpha$-mutual information, and it also extends to the task of partial recovery, in which only a constant $\lambda$-fraction of the hidden graph is required to be recovered. 
As a result, if $D_{\mathrm{KL}} \le (\lambda-\eta)D_c$, we also proved that the recovery of any $\lambda$-fraction must fail with high probability.

Another interesting phenomenon exhibited by some of these models is the ``All-or-Nothing'' (AoN), where one can either recover almost all of the hidden graph or essentially nothing at all. For several natural distributional families, including  certain Gaussian, Bernoulli and Exponential regimes, we establish an All-or-Nothing (AoN) threshold phenomenon at the exponential scale.
For Gaussians, we obtained an AoN by lifting the sharp almost exact recovery threshold through the I-MMSE relation;
For the others, we combine a second-moment argument inspired by planted subgraph recovery~\cite{mossel2023sharp} with the second-order \Renyi divergence. 
We also provide distributional examples where AoN is not universal at this level of generality, and our partial recovery lower bound is already the best possible.

\end{abstract}

\clearpage

\section{Introduction}
\label{sec:intro}

Given a positive integer $n$, two distributions $\mathcal{P}_n$ and $\mathcal{Q}_n$, and a graph collection $\mathcal{H}_n$ in which every graph has exactly $m$ edges, we study the recovery of a hidden graph $H^*$ from a weighted complete graph $A$ on $n$ vertices and $N = \binom{n}{2}$ edges. Hereafter, we abbreviate $\mathcal{P}_n$, $\mathcal{Q}_n$, and $\mathcal{H}_n$ as $\+P$, $\+Q$, and $\+H$, respectively. First, $H^*$ is sampled uniformly from $\mathcal{H}_n$. Then each edge in $H^*$ receives an independent weight from $\mathcal{P}_n$, whereas each edge outside $H^*$ receives an independent weight from $\mathcal{Q}_n$.
The task is to recover $H^*$ from $A$ given $\+P,\+Q,\+H$. Here the hidden edge set $H^*$ is the signal, while the remaining edge weights constitute the noise.
Besides its mathematical appeal, locating the information-theoretic limit of such recovery is also motivated by applications where data collections are costly, such as in brain imaging and whole-genome sequencing.
Here, we require that $m \to \infty$ as $n \to \infty$, so that the recovery guarantees can be well-defined.
We mainly focus on the following recovery guarantees: 
\begin{itemize}[topsep=1pt, itemsep=1pt, parsep=0pt]
    \item Almost exact recovery: An estimator achieves almost exact recovery if it finds $\hat{H} \in \mathcal{H}$
such that, the number of overlapping edges between $H^*$ and $\hat{H}$ satisfies $\Pr[|\hat{H} \cap H^*| <$
$(1 - \delta_n)m] \to 0$, for some sequence $\delta_n$ that tends to $0$ as $n \rightarrow \infty$.
    \item Partial recovery: For a constant $\lambda \in (0, 1)$, the goal is to find a $\hat{H} \in \mathcal H$ such that $|\hat{H} \cap H^*| \ge \lambda m$ with probability $1-o_n(1)$.
\end{itemize}
If at any given signal-to-noise ratio, as $n \to \infty$, one could either recover almost all of $H^*$, or not even any constant fraction of $H^*$, this is known as an All-or-Nothing (AoN) phenomenon.

Depending on the choice of $\+H_n$, this model captures many statistical inference questions. When $\+H_n$ consists of all isomorphic copies of a given graph $H_n$, we refer to it as an \emph{isomorphic graph collection}, denoted by $\+H_H$. Examples include:

\begin{itemize}[topsep=1pt, itemsep=1pt, parsep=0pt]
    \item \textbf{Planted cliques.} The well-known planted clique problem~\cite{jerrum1992large} corresponds to $\+H$ being the set of $k$-cliques, and $\+P_n=\mathrm{Bern}(1)$ and $\+Q_n=\mathrm{Bern}(p)$ are Bernoulli distributions.
  Some recent examples are in complexity theory~\cite{hirahara2024planted}, cryptography~\cite{bogdanov2024low}, and algorithms such as MCMC~\cite{chen2023almost}, sum-of-squares and low-degree methods~\cite{barak2019nearly,hopkins2018statistical}. 
  \item \textbf{Gaussian hidden cliques and the sparse Wigner model.} The Gaussian variant of planted cliques has also received a lot of attention in the machine learning community both for studying spectral algorithms~\cite{montanari2015limitation}, and as the sparse spiked Wigner model for sparse PCA~\cite{deshpande2014sparse,d2020sparse,ding2024subexponential}. Indeed, in the sparse spiked Wigner model, one needs to recover a sparse unit vector $x$ from observing $\sqrt{\mathrm{snr}}xx^\top + W$, where $W$ is an independent Gaussian Wigner matrix. Notably, \cite{niles2020all} showed an \emph{All-or-Nothing phenomenon} for this model (in fact, for the general Gaussian additive model) under discrete prior: at any fixed signal-to-noise ratio, one could either recover the signal almost perfectly, or nothing at all. Under the discrete prior studied in~\cite{niles2020all}, this AoN result rules out a distinct constant-fraction partial-recovery regime.
  \item \textbf{Hamiltonian cycle} recovery was studied in~\cite{BVJ20}, motivated by genome assembly, where the hidden cycle represents the true order of genome segments and edge weights model experimental Hi-C reads~\cite{lieberman2009comprehensive}. They established sharp thresholds for exact recovery under suitable divergence conditions.
  \item \textbf{Perfect matchings} recovery was proposed in~\cite{chertkov2010inference} to model moving objects/particles tracking. Since then, many variants  
have been proposed and studied, see, e.g., ~\cite{semerjian2020recovery,moharrami2021planted} for exponential distribution on weights, and~\cite{ding2023planted} for more general distributions
and allowing the observation to be a sparse bipartite version of $G(n,d/n)$ instead of a weighted complete graph.
  \item \textbf{$2k$-nearest-neighbor} (NN) graphs, which consist of $n$ nodes on a ring, where each node is also connected to its $2k$ nearest neighbors, was introduced by~\cite{ding2020consistent} as an improved approximation to genome assembly that takes advantage of longer-range but still nearby linkage information. This naturally generalizes Hamiltonian cycles, and it can also be seen as a variant of the celebrated Watts-Strogatz small-world graph model~\cite{watts1998collective,newman1999scaling}, where a $2k$-NN graph is hidden in an unweighted random graph by a rewiring procedure.

\end{itemize}

We also consider \emph{non-isomorphic graph collections} $\+H$ with the same number of vertices and edges, that are not  generated by isomorphic copies of a single graph,
and we name a few such examples:
\begin{itemize}[topsep=1pt, itemsep=1pt, parsep=0pt]
  \item \textbf{Uniform $m$-subgraphs} are perhaps the most basic setting. We consider all subgraphs with $m$ edges, ignoring structural constraints. Then, the set of edge weights can be viewed as an $\binom{n}{2}$-dimensional real-valued vector. The recovery of a hidden $m$-subgraph is therefore a discrete analog of sparse mean estimation problems~\cite{donoho1994minimax,raskutti2011minimax}. Indeed, in the $m$-sparse Gaussian location family, the mean $\theta\in \^R^{\binom{n}{2}}$ has at most $m$ non-zeros. In these mean estimation problems, one is often more interested in the sample complexity in relation to an estimation error (often $\ell_2$). Our setting is more combinatorial in nature,  as one tries to recover the mean $\theta$ given only one sample, and the error is measured in Hamming distance.
  \item \textbf{Trees} were analyzed very recently in \cite{moharrami2025planted} via local weak convergence, using fixed-point equations to precisely characterize the fraction of correctly recovered edges by the minimum spanning tree, as well as its mean weight. Their approach is also applicable to the planted Hamiltonian cycle problem and the detection problem.
  \item \textbf{$k$-factors} can be seen as a generalization of perfect matchings. A $k$-factor is a spanning subgraph of degree $k$. Indeed, $1$-factor corresponds to perfect matchings, and a $2$-factor consists of a set of cycles that spans the entire graph, and is closely related to Hamiltonian cycles. \cite{BVJ20} show that within the exact recovery regime of the hidden Hamiltonian cycle model, a linear programming relaxation designed for $2$-factor can be used to recover the hidden Hamiltonian cycle with high probability.
\end{itemize}

\subsection{Our results}
By viewing the hidden graph as a structural ``signal'', and the noisy edge weights as the ``noise'', a natural question is:
\begin{quote}
  {\em What is the ``signal-to-noise ratio'' threshold for almost exact recovery of a structural ``signal''? And if one only needs to recover a $\lambda$ fraction of the hidden graph, how does the threshold change in terms of $\lambda$?}
\end{quote} 

We give a unified answer that applies to a broad class of graphical structures and a broad class of distributions, by connecting the KL divergence between $\+P$ and $\+Q$, to the first moment threshold in \Erdos-\Renyi random graph models.
Our characterization applies to collections of graphs $\+H$ that are \emph{uniformly sparse}, and to distributions whose \Renyi divergence satisfies a local Lipschitzness condition. In the following, we explain these conditions on $\+H$ and assumptions on distributions $\+P$ and $\+Q$ more formally.

{\noindent\textbf{Assumptions on graph collections.}}
Before introducing the notion of \emph{uniformly sparse} graphs, it is helpful to recall the \emph{first moment flat} condition introduced by~\cite{mossel2023sharp}, under which they characterized the All-or-Nothing (AoN) threshold in \Erdos-\Renyi random graph model with a planted graph from isomorphic $\+H$. They further showed that first moment flatness is also necessary for many graph collections in the planted \Erdos-\Renyi random graph model to exhibit AoN.
Intuitively, this condition asks that any $H\in \+H$ does not contain any particularly dense subgraph.
More formally, for a graph collection $\mathcal{H}$, let $M=\abs{\+H}$ denote its cardinality and $m$ denote the number of edges in each graph, the \emph{first moment threshold} is defined as $p_{1M}(\+H) := M^{-\frac{1}{m}}$. Put differently, it is the threshold where the expected number of copies of all $H \in \+H$ in \Erdos-\Renyi graph $G(n,p)$ equals $1$.
For an isomorphic graph collection $\mathcal{H}_H$ consisting of all graphs isomorphic to graph $H$, we further define the \emph{expectation threshold} as $p_E(\mathcal{H}_H) := \max_{J \subseteq H} p_{1M}(\mathcal{H}_J)$, where $J$ is an edge induced subgraph of $H$ (hereafter abbreviated as $p_{1M}$ and $p_E$ respectively). These are basic thresholds in random graph theory that serve as convenient lower bounds for the critical threshold.
\begin{definition}[First Moment Flat]
\label{def:first-moment-flat}
An isomorphic graph collection $\mathcal{H}_H$ is \emph{first moment flat} if 
\begin{equation}
\log (p_E(\mathcal{H}_H)^{-1}) = (1 + o(1)) \log (p_{1M}(\mathcal{H}_H)^{-1}).
\end{equation}
\end{definition}

The more general condition for graph collections used in our analysis is the following \emph{uniformly sparse} condition. This is closely related to the growth condition for exponential scale AoN in Equation (3.6) of \cite{mossel2023sharp}, but to handle non-isomorphic graph collections and general distributions, we adopt a slightly stronger form.

\begin{definition}[Uniformly sparse]
  \label{def:uniformly-sparse}
  A graph collection $\mathcal{H}$ is said to be \emph{uniformly sparse} if $\log(p_{1M}^{-1}) = \omega(1)$ and
\begin{equation}
\max_{H_1\in\+H}\ \max_{\ell\in[0,m]\cap\mathbb Z}
\left\{
\log \Pr_{H_2 \sim \+H}[|H_1 \cap H_2| = \ell]
+\ell \log(p_{1M}^{-1})
\right\}
= o(\log M).
\end{equation}

\end{definition}

Intuitively, the condition $\log(p_{1M}^{-1})=\omega(1)$ requires the graph to be sufficiently sparse, while the overlap probability condition rules out the presence of locally dense subgraphs.
We will show in~\Cref{sec:uniformly-sparse} that first moment flat graphs satisfying $\log(p_{1M}^{-1}) = \omega(1)$ are always uniformly sparse. Furthermore, another sufficient condition for uniform sparsity is that the graph collection is \emph{sufficiently large}. Examples include uniform $m$-subgraphs, trees, and $k$-factors.

{\noindent\textbf{Assumptions on distributions.}}
Let $D_{\alpha}(\+P \| \+Q)
    := \frac{1}{\alpha-1} \log \mathbb{E}_{\+P} \left[ \left( \frac{\mathrm{d} \+P}{\mathrm{d} \+Q} \right)^{\alpha - 1} \right]$ be the \Renyi divergence of order $\alpha$ (in this paper we assume that $\+P \ll \+Q$).
    This family of divergences interpolates several classical discrepancy measures: in particular, $\lim_{\alpha\to 1} D_{\alpha}(\+P\|\+Q)=D_{\mathrm{KL}}(\+P\|\+Q)$, and $D_{2}(\+P\|\+Q)=\log\bigl(1+\chi^{2}(\+P\|\+Q)\bigr)$, where $\chi^{2}(\+P\|\+Q)$ denotes the chi-squared divergence, a natural quantity in second-moment arguments underlying contiguity and testing lower bounds.
We focus on distributions with a \emph{stable} \Renyi divergence, which are satisfied by many natural distributions. 
These assumptions facilitate a unified analysis framework: they can be used both in deriving upper and lower bounds.

\begin{assumption}[\Renyi Stability Upper Bound]
\label{asm:renyi-stable-upper}
  For any fixed $\theta > 0$,
      there exists $\alpha_n \in (0,1)$ with $\alpha_n \to 1$ and
      $1-\alpha_n=\omega(\frac{1}{D_{\mathrm{KL}}(\+P \| \+Q)})$, such that, for all sufficiently large $n$,
  \begin{equation}
    D_{\alpha_n}(\+P \| \+Q) \ge (1-\theta) D_{\mathrm{KL}}(\+P \| \+Q),
  \end{equation}

\end{assumption}

\begin{assumption}[\Renyi Stability Lower Bound]
\label{asm:renyi-stable-lower}
    For any fixed $\theta > 0$, 
    there exists $\beta_n > 1$ with $\beta_n \to 1$ and $\beta_n-1 = \Omega(\frac{1}{D_{\mathrm{KL}}(\+P \| \+Q)})$, such that, for all sufficiently large $n$,
  \begin{equation}
    D_{\beta_n}(\+P \| \+Q) \le (1+\theta) D_{\mathrm{KL}}(\+P \| \+Q).
  \end{equation}
\end{assumption}
We will refer to distributions $\+P$ and $\+Q$ that satisfy these assumptions as \emph{\Renyi-stable}.
In \Cref{sec:renyi-stability}, we show that the above assumptions hold for many distributions (e.g., Bernoulli, Gaussian, and Exponential distributions), and in various regimes. We also provide some intuitions there.

Our main result is that the critical threshold for almost exact recovery is around $D_{\mathrm{KL}}(\mathcal{P} \| \mathcal{Q}) \approx \log(p_{1M}^{-1})$. Recovering a $\lambda$-fraction of edges requires $D_{\mathrm{KL}}(\mathcal{P} \| \mathcal{Q}) \gtrapprox \lambda \log(p_{1M}^{-1})$. Furthermore, when $D_{2}(\mathcal{P} \| \mathcal{Q})$ is below $\log(p_{1M}^{-1})$, constant fraction recovery becomes impossible.

\begin{theorem}\label{thm:main}
  Let $\mathcal{H}$ be uniformly sparse, and $\+P, \+Q$ be \Renyi-stable. It holds that
  \begin{enumerate}
    \item If $D_{\mathrm{KL}}(\mathcal{P} \| \mathcal{Q}) \ge (1 + \eta) \log(p_{1M}^{-1})$ for some constant $\eta > 0$, then the maximum likelihood estimator (MLE) achieves almost exact recovery.
    \item Conversely, almost exact recovery requires that $D_{\mathrm{KL}}(\mathcal{P} \| \mathcal{Q}) \ge (1 - o(1)) \log(p_{1M}^{-1})$.
    \item Furthermore, if $D_{\mathrm{KL}}(\mathcal{P} \| \mathcal{Q}) \le (\lambda - \eta) \log(p_{1M}^{-1})$ for some constant $0 < \eta < \lambda < 1$, then any algorithm (efficient or not) designed to recover a $\lambda$-fraction of the hidden edges fails with high probability.
    \item If $D_{2}(\mathcal{P} \| \mathcal{Q}) \le (1 - \eta) \log(p_{1M}^{-1})$ for some constant $\eta > 0$, then any algorithm (whether efficient or not) designed to recover any constant fraction of the hidden edges fails with high probability.
  \end{enumerate}
\end{theorem}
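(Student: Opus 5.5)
We treat the four parts separately: part 1 by a union bound over likelihood comparisons, parts 2–3 by a Fano-type argument, and part 4 by a second-moment argument.

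\textbf{Part 1 (MLE upper bound).} For a fixed hidden $H^*$ and a competitor $H$ with $|H\cap H^*|=\ell$, the MLE errs toward $H$ only if
\[
\sum_{e\in H\setminus H^*}\log\tfrac{d\+P}{d\+Q}(A_e)\ \ge\ \sum_{e\in H^*\setminus H}\log\tfrac{d\+P}{d\+Q}(A_e).
\]
Write $k=m-\ell$. A Chernoff bound with tilting exponent $1-\alpha$, using $\alpha=\alpha_n$ from \Cref{asm:renyi-stable-upper}, bounds this probability by $\exp(-k(1-\alpha)D_\alpha(\+P\|\+Q))$. Here the Bhattacharyya-type integral $\int p^{\alpha}q^{1-\alpha}$ appears twice, once per side, so the bound involves the order-$\alpha$ Rényi divergence. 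A cleaner route is to split at a threshold $\tau$ on the two sums separately. Following~\cite{ding2020consistent}, we instead bound the expected number of competitors at distance $k$ that beat $H^*$:
\[
\sum_{k\ge \delta m}\ \Pr_{H\sim\+H}\bigl[|H\cap H^*|=m-k\bigr]\, M\, e^{-k(1-o(1))D_{\mathrm{KL}}}.
\]
Uniform sparsity gives $\Pr[|H\cap H^*|=m-k]\,M\le e^{k\log p_{1M}^{-1}+o(\log M)}$, since $\log M=m\log p_{1M}^{-1}$. Each term is then at most $\exp(-k(\eta-\theta')\log p_{1M}^{-1}+o(\log M))$. For $k\ge\delta m$ this is $e^{-\Omega(\log M)}$, and summing over at most $m$ values of $k$ gives $o(1)$. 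Two issues need care. First, the tilt must give exponent rate $D_\alpha$ per edge rather than a weaker quantity; this is why the symmetric split with $\alpha$ close to $1$ is used, with $1-\alpha=\omega(1/D_{\mathrm{KL}})$ ensuring the slack $\log$-terms are negligible. Second, letting $\delta=\delta_n\to0$ slowly absorbs the $o(\log M)$ term.

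\textbf{Parts 2 and 3 (Fano lower bound).} We use a distance-based Fano inequality. If an estimator attains overlap at least $\lambda m$ with probability $1-o(1)$, then
\[
I(H^*;A)\ \ge\ (1-o(1))\,\log\frac{1}{\max_{H_1}\Pr_{H_2}[|H_1\cap H_2|\ge\lambda m]}.
\]
By uniform sparsity, $\log\bigl(1/\max_{H_1}\Pr_{H_2}[|H_1\cap H_2|\ge\lambda m]\bigr)\ge \lambda m\log p_{1M}^{-1}-o(\log M)$. On the other side, $I(H^*;A)\le m D_{\mathrm{KL}}$. This gives $D_{\mathrm{KL}}\ge(\lambda-o(1))\log p_{1M}^{-1}$, contradicting $D_{\mathrm{KL}}\le(\lambda-\eta)\log p_{1M}^{-1}$; this proves part 3. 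Taking $\lambda\to1$ gives part 2.

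To upgrade "fails with probability bounded away from zero" to "fails with high probability", we replace mutual information by Sibson's $\alpha$-mutual information with $\alpha=\beta_n>1$ from \Cref{asm:renyi-stable-lower}. The strong-converse form of Fano then gives
\[
\Pr[\text{success}]\ \le\ \exp\!\Bigl(\tfrac{\beta-1}{\beta}\bigl(I_\beta-\log(1/p_{\max})\bigr)\Bigr),
\]
with $I_\beta\le mD_\beta\le(1+\theta)mD_{\mathrm{KL}}$. The gap is of order $\eta m\log p_{1M}^{-1}$, and multiplied by $\beta-1=\Omega(1/D_{\mathrm{KL}})$ it still tends to infinity because $m\to\infty$. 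Proving this Sibson-based distance Fano inequality is the main technical step, and I expect it to be the chief obstacle.

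\textbf{Part 4 ($D_2$ bound).} We use a second-moment (posterior overlap) argument in the spirit of~\cite{mossel2023sharp}. Compute $\mathbb E_{\+Q^{\otimes N}}[L^2]$ for the likelihood ratio $L=\frac1M\sum_H\prod_{e\in H}\frac{d\+P}{d\+Q}$. This equals
\[
\mathbb E_{H_1,H_2}\bigl[e^{|H_1\cap H_2|D_2}\bigr]\ \le\ \sum_\ell e^{\ell(D_2-\log p_{1M}^{-1})+o(\log M)},
\]
which is controlled by uniform sparsity. The contributions from $\ell\ge\epsilon m$ are then $e^{-\Omega(\log M)}$. Via the standard reduction (planting, then the Nishimori identity), this shows the posterior overlap $|H\cap H^*|$ is $o(m)$ with high probability, so no estimator attains constant-fraction overlap.
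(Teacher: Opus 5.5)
\textbf{Verdict: Parts 2--4 are fine, but Part 1 has a genuine gap — the pairwise union bound cannot reach the $D_{\mathrm{KL}}$ threshold.}

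Your Parts 2--4 follow the paper's route:
\begin{itemize}
\item Part 2: distance-based Fano with $I(X;Y)\le mD_{\mathrm{KL}}(\mathcal P\|\mathcal Q)$.
\item Part 3: the Sibson $\alpha$-Fano inequality, with $I_\alpha\le mD_\alpha$ and $\alpha$ supplied by Assumption~2.
\item Part 4: a second-moment bound on the posterior overlap.
\end{itemize}
One caveat in Part 4: uniform sparsity does not control the full second moment. The terms with $\ell=o(m)$ only get the bound $e^{o(\log M)}$. You therefore need two things: truncate to $\ell\ge\epsilon m$, and use $\Pr_{\mathbb P}[Z(Y)\le\varepsilon M]\le\varepsilon$, which follows from $\mathrm{d}\mathbb P/\mathrm{d}\mathbb Q=Z/M$.

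\textbf{Why Part 1 fails.} Your displayed bound charges each competitor at distance $k$ a pairwise error $e^{-k(1-o(1))D_{\mathrm{KL}}}$, then multiplies by the number of competitors. No such pairwise bound holds. For $\Pr[\sum_{e\in H\setminus H^*}L_e\ge\sum_{e\in H^*\setminus H}L_e]$, a tilt $s$ gives per-edge rate $(1-s)D_s+sD_{1-s}$. This is maximized at $s=1/2$, with value $D_{1/2}(\mathcal P\|\mathcal Q)$, and by Cram\'er this rate is sharp. For $\mathcal P=N(\mu,1)$, $\mathcal Q=N(0,1)$ it equals $\mu^2/4=D_{\mathrm{KL}}/2$; indeed the exact error is $\Pr[N(0,1)\ge\mu\sqrt{k/2}]\approx e^{-kD_{\mathrm{KL}}/2}$. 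Your own tilt $s=1-\alpha_n$ gives an even smaller rate.

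Worse, the quantity you are trying to bound is genuinely large. Take uniform $m$-subgraphs with Gaussian weights and $D_{\mathrm{KL}}=\tfrac32\log(p_{1M}^{-1})$. There are $e^{(1+o(1))k\log(p_{1M}^{-1})}$ competitors at distance $k=\delta m$. So the expected number that beat $H^*$ is $e^{(1/4+o(1))k\log(p_{1M}^{-1})}\to\infty$. Yet the MLE, which here is top-$m$ selection, succeeds. The events are strongly correlated: every competitor sharing an atypically light removed part $H^*\setminus H$ wins at once. A first moment over competitors overcounts exactly this.

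\textbf{The missing idea} (the paper's) is to decouple the two sides and union-bound each over a different index set. Fix $\tau=(1+\eta/4)\log(p_{1M}^{-1})$. If $L(H)>L(H^*)$, then either $\sum_{e\in H^*\setminus H}L_e\le k\tau$ or $\sum_{e\in H\setminus H^*}L_e\ge k\tau$.

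The first event depends only on the $k$-subset $H^*\setminus H$ of $H^*$.
\begin{itemize}
\item Union-bound it over at most $\binom{m}{k}\le(e/\delta_n)^k$ sets, each of probability at most $e^{-kE_P(\tau)}$.
\item Tilting at $\alpha_n-1<0$ gives $E_P(\tau)\ge(1-\alpha_n)(D_{\alpha_n}-\tau)\ge c(1-\alpha_n)D_{\mathrm{KL}}\to\infty$.
\item This is where $1-\alpha_n=\omega(1/D_{\mathrm{KL}})$ is used. The exponent only has to beat $\log(e/\delta_n)$, not $\log(p_{1M}^{-1})$, and you arrange that by letting $\delta_n\to0$ slowly.
\end{itemize}

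The second event depends only on $H\setminus H^*$.
\begin{itemize}
\item Here you pay the full uniform-sparsity count $e^{k\log(p_{1M}^{-1})+o(\log M)}$.
\item Tilting at $1$ gives $E_Q(\tau)\ge\tau-\psi_Q(1)=\tau$, leaving $e^{-k\eta\log(p_{1M}^{-1})/4+o(\log M)}$.
\end{itemize}

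This asymmetric accounting is what makes $D_{\mathrm{KL}}$, rather than $D_{1/2}$, the threshold. You mention the $\tau$-split and then set it aside. Note that the split alone is not enough: multiplying the per-competitor bound $e^{-kE_P(\tau)}+e^{-kE_Q(\tau)}$ by the total number of competitors fails for the same reason.
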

The four parts of \Cref{thm:main} are proved in \Cref{sec:almost-upper-bound,sec:almost-lower-bound,sec:partial-lower-bound,sec:12renyi-aon} respectively.

\begin{figure}[H]
\centering
\begin{tikzpicture}[scale=1.2]
\definecolor{lightred}{RGB}{255,200,200}
\definecolor{verylightred}{RGB}{255,230,230}
\definecolor{lightgreen}{RGB}{230,245,230}
\definecolor{lightblue}{RGB}{230,230,255}

\pgfmathsetmacro{\X}{8}
\pgfmathsetmacro{\Y}{3}
\pgfmathsetmacro{\A}{5.7}
\pgfmathsetmacro{\N}{2.4}

\coordinate (O) at (0,0);
\coordinate (X) at (\X,0);
\coordinate (Y) at (0,\Y);
\coordinate (A) at (\A,0);
\coordinate (N) at (\N,0);

\pgfmathsetmacro{\Factor}{\N/\A}
\coordinate (D) at ($(O)!\Factor!(\A,\Y)$);

\fill[lightred] (O) rectangle ($(N) + (Y)$);
\fill[verylightred] (D) -- ($(A) + (Y)$) -- ($(N) + (Y)$) -- cycle;

\fill[lightgreen] (A) rectangle ($(X) + (Y)$);

\fill[lightblue] (N) -- (D) -- ($(A) + (Y)$) -- (A) -- cycle;

\draw[dashed] (O) -- (D);
\draw (N) -- (D) -- ($(A) + (Y)$) -- ($(X) + (Y)$);

\pgfmathsetmacro{\U}{1.3}
\node at ($(0.5*\N, \U)$) {Nothing};
\node at ($(0.5*\A+0.5*\X,\U)$) {All};
\node at ($(\N-0.345,\Y-0.3)$) {Impossible region};
\node at ($(0.5*\A+0.5*\X,\Y-0.3)$) {Possible region};
\node[align=center] at ($(0.5*\A+0.5*\N,\U)$) {Depends on $\+P,\+Q$};

\draw[->] (O) -- ($(X) + (0.5, 0)$) node[right] {$D_{\mathrm{KL}}$};
\draw[->] (O) -- ($(Y) + (0, 0.5)$) node[above, align=center] {Target edge recovery fraction};

\draw (\N,0.1) -- (\N,-0.1);
\draw (\A,0.1) -- (\A,-0.1);
\draw (0.1,\Y) -- (-0.1,\Y);

\node[below, align=center] at (N) {$\kappa_2$\\($D_2(\mathcal{P}\|\mathcal{Q}) = \log(p_{1M}^{-1})$)};
\node[below] at (A) {$\log(p_{1M}^{-1})$};

\node[left] at (Y) {1};
\node[left] at (O) {0};
\end{tikzpicture}
\caption{Asymptotic visualization of \Cref{thm:main}. Here $\kappa_2$ denotes the $D_{\mathrm{KL}}$-coordinate at which $D_2(\mathcal P\|\mathcal Q)=\log(p_{1M}^{-1})$ along a fixed parametrization of the distribution pair.}
\label{fig:tikz-main}
\end{figure}

\Cref{fig:tikz-main} illustrates the recovery phase transition characterized in \Cref{thm:main}. The horizontal axis represents the KL divergence $D_{\mathrm{KL}}(\mathcal{P}\|\mathcal{Q})$, while the vertical axis represents the fraction of edges attempted to be recovered.
 The green region labeled `All' corresponds to part (1), where $D_{\mathrm{KL}} \ge (1+\eta)\log(p_{1M}^{-1})$ and the MLE achieves almost exact recovery.
 The big red upper triangular region corresponds to part (3), where $D_{\mathrm{KL}} \le (\lambda-\eta)\log(p_{1M}^{-1})$ and any algorithm fails to recover $\lambda$ fraction of edges with any constant probability.
 For a fixed parametrization of the distribution pair, let $\kappa_2$ denote the value of $D_{\mathrm{KL}}$ at which $D_2(\mathcal P\|\mathcal Q)=\log(p_{1M}^{-1})$.
 By the monotonicity of \Renyi divergence ($D_{\mathrm{KL}} = D_1 \le D_2$), $\kappa_2$ is no larger than $\log(p_{1M}^{-1})$.
 To the left of this point, the red rectangular area labeled `Nothing' represents part (4), where $D_2(\mathcal{P}\|\mathcal{Q}) \le (1-\eta)\log(p_{1M}^{-1})$, and any algorithm fails to recover any constant fraction of edges with any constant probability.
 While the regions defined by parts (3) and (4) partially overlap, their union gives the region in which the success probability of any recovery algorithm vanishes asymptotically.

For the all-or-nothing phenomenon (AoN), we adopt the definition of \cite{mossel2023sharp}. They identified two types of AoN, to unify treatment, we use the KL divergence as a common signal-to-noise scale: In the linear scale AoN regime, the phase transition is said to occur at $D_{\mathrm{KL}} = D_{\mathrm{AoN}} \pm \varepsilon$; whereas in the exponential scale regime, the transition occurs at $D_{\mathrm{KL}} = (1 \pm \varepsilon) D_{\mathrm{AoN}}$ for any constant $\varepsilon > 0$. Unless otherwise specified, we will focus on the exponential-scale AoN regime in what follows.

Unlike almost exact recovery, 
 the existence of AoN crucially depends on the choice of distribution. 
 In particular, \cite{mossel2023sharp} proved that for $\mathcal{P} = \mathrm{Bern}(1)$ and $\mathcal{Q} = \mathrm{Bern}(p)$, there is an exponential scale AoN for $\mathcal{H}$ that are first moment flat. For $\mathcal{P} = \mathrm{Exp}(1/\mu)$, $\mathcal{Q} = \mathrm{Exp}(1/n)$, \cite{moharrami2025planted} suggests that for $\mathcal{H}$ being the set of trees or Hamiltonian paths, an AoN occurs in the exponential scale. If we restate these thresholds in terms of the KL divergence, they both occur at $D_{\mathrm{KL}}(\mathcal{P} \| \mathcal{Q}) = (1\pm o(1)) \log(p_{1M}^{-1})$.

Given item 1 and 4 in \Cref{thm:main}, if $D_{2}(\mathcal{P} \| \mathcal{Q}) = (1+ o(1)) D_{\mathrm{KL}}(\mathcal{P} \| \mathcal{Q})$, then one immediately has an exponential scale AoN at $D_{\mathrm{KL}}(\+P\|\+Q) \approx \log(p_{1M}^{-1})$.
 
 \begin{corollary}\label{col:12renyi}
    Let $\+H$ be uniformly sparse, and let $\+P,\+Q$ be \Renyi-stable. If
    $D_{2}(\mathcal{P} \| \mathcal{Q}) = (1+ o(1)) D_{\mathrm{KL}}(\mathcal{P} \| \mathcal{Q})$, then
    \begin{enumerate}
    \item if $D_{\mathrm{KL}}(\mathcal{P} \| \mathcal{Q}) \ge (1 + \eta) \log(p_{1M}^{-1})$ for some constant $\eta > 0$, then MLE achieves almost exact recovery.
    \item If $D_{\mathrm{KL}}(\mathcal{P} \| \mathcal{Q}) \le (1 - \eta) \log(p_{1M}^{-1})$ for some $\eta>0$, then any algorithm (whether efficient or not) designed to recover any constant fraction of the hidden edges fails with high probability.
  \end{enumerate}
\end{corollary}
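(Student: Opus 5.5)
Both parts follow directly from \Cref{thm:main}; the only work is translating the hypothesis of part 2 into the hypothesis of item 4 of \Cref{thm:main}.

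\emph{Part 1.} This is item 1 of \Cref{thm:main} verbatim. The hypotheses coincide: $\+H$ uniformly sparse, $\+P,\+Q$ \Renyi-stable, and $D_{\mathrm{KL}}(\+P\|\+Q)\ge(1+\eta)\log(p_{1M}^{-1})$. The extra assumption on $D_2$ is not used.

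\emph{Part 2.} Fix $\eta>0$ and suppose $D_{\mathrm{KL}}(\+P\|\+Q)\le(1-\eta)\log(p_{1M}^{-1})$. By hypothesis there is a sequence $\epsilon_n\to0$ with $D_2(\+P\|\+Q)\le(1+\epsilon_n)D_{\mathrm{KL}}(\+P\|\+Q)$. Hence
\[
D_2(\+P\|\+Q)\le(1+\epsilon_n)(1-\eta)\log(p_{1M}^{-1}).
\]
For all sufficiently large $n$ we have $\epsilon_n\le \eta/(2(1-\eta))$, and then $(1+\epsilon_n)(1-\eta)\le 1-\eta/2$. So $D_2(\+P\|\+Q)\le(1-\eta')\log(p_{1M}^{-1})$ with the constant $\eta'=\eta/2>0$. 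Item 4 of \Cref{thm:main} now applies with $\eta'$ in place of $\eta$, and it gives that any algorithm aiming to recover a constant fraction of $H^*$ fails with high probability.

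Two small points need care, and neither is a real obstacle.

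\begin{itemize}
\item The ``$o(1)$'' in the hypothesis is a statement about the sequence $(\+P_n,\+Q_n)$. The bound only needs to hold eventually, which matches the asymptotic ``with high probability'' conclusion of item 4.
\item Only the upper direction $D_2\le(1+o(1))D_{\mathrm{KL}}$ is used. The lower direction holds automatically by monotonicity of $D_\alpha$ in $\alpha$.
\end{itemize}

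Together the two parts give the exponential-scale All-or-Nothing transition at $D_{\mathrm{KL}}\approx\log(p_{1M}^{-1})$.
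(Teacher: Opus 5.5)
Your proposal is correct and matches the paper's argument: the paper obtains this corollary immediately from items 1 and 4 of \Cref{thm:main}, exactly as you do. Your explicit choice $\eta'=\eta/2$, with $(1+\epsilon_n)(1-\eta)\le 1-\eta/2$ for large $n$, supplies the constant bookkeeping that the paper leaves implicit.
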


Not all distributions satisfy $D_{2}= (1+ o(1)) D_{\mathrm{KL}}$. In \Cref{sec:12renyi-check}, we show that distributions satisfying $D_{2}= (1+ o(1)) D_{\mathrm{KL}}$ include Bernoulli and Exponential distributions under \emph{natural parameterizations}. 
Two notable exceptions are $\+P=\-{Bern}(\Theta(1)),\ \+Q=\-{Bern}(o(1))$, and Gaussian distributions.

For the first exception of Bernoulli, there is no AoN in general, and our partial recovery lower bound (item 3 in 
\Cref{thm:main}) is tight. In this case, the entire blue region of~\Cref{fig:tikz-main} can have partial recovery.
\begin{theorem}\label{thm:non-aon}
  Fix any $\lambda\in (0,1)$, and assume $m=o(N)$ and $m\to\infty$. Let $\+H$ be the collection of all $m$-edge subgraphs of $K_n$, and let $\+P=\-{Bern}(\lambda), \+Q=\-{Bern}\left(\frac{(1-\lambda) m}{N-m}\right)$. Then $D_{\mathrm{KL}}(\mathcal{P} \| \mathcal{Q}) = (\lambda+o(1)) \log(p_{1M}^{-1})$ and
	    there exists an estimator that recovers a $\lambda + o(1)$ fraction of the hidden edges with high probability.
\end{theorem}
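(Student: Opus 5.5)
Two things must be shown: the KL asymptotics, and an estimator that recovers a $(\lambda+o(1))$ fraction of the hidden edges.

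\textbf{KL asymptotics.} Write $q=\frac{(1-\lambda)m}{N-m}$; since $m=o(N)$ we have $q\to 0$. For Bernoulli distributions,
\[
D_{\mathrm{KL}}(\mathrm{Bern}(\lambda)\|\mathrm{Bern}(q))=\lambda\log\frac{\lambda}{q}+(1-\lambda)\log\frac{1-\lambda}{1-q}=\lambda\log(q^{-1})+O(1),
\]
which is $\lambda\log(N/m)+O(1)$.

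Next, $M=\binom{N}{m}$, so $\log M=m\log(N/m)+O(m)$ by Stirling's formula, using $m=o(N)$ and $m\to\infty$. Hence
\[
\log(p_{1M}^{-1})=\frac{1}{m}\log M=\log(N/m)+O(1),
\]
which tends to infinity. Comparing the two expansions gives $D_{\mathrm{KL}}=(\lambda+o(1))\log(p_{1M}^{-1})$.

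\textbf{Estimator.} Let $S=\{e:A_e=1\}$ be the set of observed ones, and output any $m$-edge graph $\hat H$ containing as many edges of $S$ as possible. Concretely, if $|S|\ge m$, take $m$ edges of $S$ chosen uniformly at random. If $|S|<m$, take all of $S$ and pad with arbitrary other edges.

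Decompose $S=S_1\cup S_0$, where $S_1=S\cap H^*$ and $S_0=S\setminus H^*$. Then $|S_1|\sim\mathrm{Bin}(m,\lambda)$ and $|S_0|\sim\mathrm{Bin}(N-m,q)$, and both have mean of order $m$:
\[
\mathbb E|S_1|=\lambda m,\qquad \mathbb E|S_0|=(1-\lambda)m.
\]
Since $m\to\infty$, Chebyshev's inequality gives, with high probability,
\[
|S_1|=(\lambda+o(1))m,\qquad |S_0|=(1-\lambda+o(1))m,\qquad |S|=(1+o(1))m.
\]

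In the case $|S|<m$, the output contains all of $S_1$, so the overlap is $|\hat H\cap H^*|\ge |S_1|=(\lambda-o(1))m$.

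In the case $|S|\ge m$, we select $m$ of the $|S|$ edges uniformly at random. Conditioned on $|S|$ and $|S_1|$, the overlap is hypergeometric with mean
\[
m\cdot\frac{|S_1|}{|S|}=(\lambda+o(1))m,
\]
and it concentrates around this mean. Alternatively, we can argue deterministically: at most $|S|-m=o(m)$ edges are dropped, so the overlap is at least $|S_1|-o(m)$. The deterministic argument avoids the randomization entirely.

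Either way, $|\hat H\cap H^*|\ge(\lambda-o(1))m$ with high probability. Moreover, $|\hat H\cap H^*|\le |S_1|+|S|\,\mathbb 1\{|S|<m\}\text{-padding}$, and the padding edges hit $H^*$ with probability $O(m/N)=o(1)$ per edge. So the recovered fraction is exactly $\lambda+o(1)$.

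\textbf{Main obstacle.} The only delicate point is interpreting ``recovers a $\lambda+o(1)$ fraction'' as overlap $\ge(\lambda-o(1))m$ with a vanishing slack $\delta_n$. We choose $\delta_n$ to dominate the Chebyshev fluctuations, for example $\delta_n=m^{-1/3}$. The result is consistent with item 3 of \Cref{thm:main}, which rules out any fraction strictly above $\lambda$.
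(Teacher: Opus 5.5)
Your proposal is correct and follows the paper's proof. Both compare $D_{\mathrm{KL}}$ and $\log(p_{1M}^{-1})$ through $\log(N/m)$, and both use the estimator that takes the observed ones and pads if needed, which guarantees overlap at least $\min(|S_1|, m-|S_0|)$. The only differences are that you use Chebyshev in place of the paper's multiplicative Chernoff bounds (weaker tails, but enough for high probability), and that your closing upper-bound remark about padding is garbled but unnecessary, since padding adds at most $(m-|S|)_+ = o(m)$ edges anyway.
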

We prove this theorem in \Cref{sec:partial-tightness}.

 For the other exception of Gaussian distributions, AoN has been studied in the context of sparse PCA~\cite{niles2020all}.
 We show that AoN is universal for recovering hidden structures with Gaussian weights, by lifting the sharp threshold for almost exact recovery to an AoN through the I-MMSE relation for Gaussian channels. In particular, this means that the entire blue region of~\Cref{fig:tikz-main} corresponds to ``Nothing'' for Gaussian distributions.
\begin{theorem}\label{thm:aon}
  Let $\mathcal{H}$ be uniformly sparse, $\+P = N(\mu_n, 1)$, $\+Q = N(0, 1)$, and $\mu_n > 0$. For every fixed $\eta>0$, if
  $D_{\mathrm{KL}}(\+P\|\+Q)\ge (1+\eta)\log(p_{1M}^{-1})$, then the MLE achieves almost exact recovery. If
  $D_{\mathrm{KL}}(\+P\|\+Q)\le (1-\eta)\log(p_{1M}^{-1})$, then for every fixed $\lambda,\delta \in (0,1)$, no estimator can recover a $\lambda$-fraction of the hidden edges with probability at least $\delta$.
\end{theorem}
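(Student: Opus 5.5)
The first part (almost exact recovery by the MLE) needs no new work. For $\+P=N(\mu_n,1)$ and $\+Q=N(0,1)$ we have $D_\alpha(\+P\|\+Q)=\alpha\mu_n^2/2$ for every $\alpha$, so $D_{\mathrm{KL}}=\mu_n^2/2$. We choose $\alpha_n=1-\theta$ and $\beta_n=1+\theta$; these are constants, which trivially satisfy the $\omega(1/D_{\mathrm{KL}})$ and $\Omega(1/D_{\mathrm{KL}})$ requirements. With these choices, \Cref{asm:renyi-stable-upper,asm:renyi-stable-lower} hold with exactly $(1\pm\theta)D_{\mathrm{KL}}$. Item 1 of \Cref{thm:main} then gives the first claim directly. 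The real work is the ``nothing'' side. Note that $D_2=2D_{\mathrm{KL}}$, so item 4 of \Cref{thm:main} only covers $D_{\mathrm{KL}}\le(\tfrac12-\eta)\log(p_{1M}^{-1})$. To close the gap we use the I-MMSE relation.

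We view the observation as a Gaussian channel $A=\sqrt{s}\,x^*+Z$. Here $x^*\in\{0,1\}^N$ is the indicator of $H^*$, $Z$ is standard Gaussian, and $s=\mu^2=2D_{\mathrm{KL}}$. Let $s_c=2\log(p_{1M}^{-1})$. The mutual information is
\[
I(s)=\mathbb E\log\frac{\mathrm d P_{A\mid x^*}}{\mathrm dP_A}=\frac{sm}{2}-\mathbb E\log \mathbb E_{H\sim\+H}\exp\bigl(\sqrt{s}\langle x_H,A\rangle - sm/2\bigr).
\]
This satisfies $I(s)\le sm/2$ and $I(s)\le \log M=m\log(p_{1M}^{-1})=ms_c/2$. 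Normalize $i(s)=I(s)/m$. The I-MMSE relation gives $\frac{\mathrm d}{\mathrm ds}I(s)=\tfrac12\mathrm{mmse}(s)$, where $\mathrm{mmse}(s)=\mathbb E\|x^*-\mathbb E[x^*\mid A]\|^2\le m$. Set $\overline{\mathrm{mmse}}=\mathrm{mmse}/m$.

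\emph{Step 1 (upper regime).} For $s\ge(1+\eta)s_c$, almost exact recovery holds with high probability by part 1. Since $\hat H\in\+H$ has exactly $m$ edges, $\|x_{\hat H}-x^*\|^2=o(m)$ with high probability and is always at most $2m$. Hence $\overline{\mathrm{mmse}}(s)\to0$ there. Integrating I-MMSE from $s$ to $\infty$, where $I(\infty)=\log M$ (the entropy of a uniform $H^*$, since $\mathcal H$ is finite), shows that $i(s)$ is close to $s_c/2$ for $s$ just above $s_c$. Concretely, $i((1+\eta)s_c)\ge s_c/2-o(s_c)$. The part I expect to be the main obstacle is controlling the tail $\int_{(1+\eta)s_c}^\infty \overline{\mathrm{mmse}}$, because part 1 only gives $o(1)$ pointwise and not an integrable bound. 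I would first try to show that the MLE error decays exponentially in $s$ beyond $(1+\eta)s_c$, via the union bound already used in the upper-bound proof. As a fallback, I would use monotonicity of the mmse together with $i\le s_c/2$ to bound things via $i(s)\ge \frac{s_c}{2}-\int_s^{\infty}\frac12\overline{\mathrm{mmse}}$, truncated at $s=C s_c$ with an explicit error estimate.

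\emph{Step 2 (area argument).} Since $i(s)\le s/2$ and $i'=\overline{\mathrm{mmse}}/2\le\tfrac12$, we get
\[
\tfrac12\int_{(1-\eta)s_c}^{(1+\eta)s_c}\overline{\mathrm{mmse}}=i((1+\eta)s_c)-i((1-\eta)s_c)\ge \tfrac{s_c}{2}-o(s_c)-i((1-\eta)s_c).
\]
This is at most $\eta s_c$, with near-equality exactly when $i((1-\eta)s_c)\approx(1-\eta)s_c/2$. Combining with $\overline{\mathrm{mmse}}\le 1$ and monotonicity of the mmse in $s$, we obtain $\overline{\mathrm{mmse}}(s)\ge 1-O(\eta')$ for all $s\le(1-\eta)s_c$. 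Here I take a smaller $\eta'$ and use the window $[(1-\eta)s_c,(1-\eta')s_c]$ together with the near-saturation of the mutual information at $s/2$. This saturation comes from $i(s)\ge s/2-o(s)$ below threshold, which I would derive from a first-moment bound on the partition function, using uniform sparsity to control overlaps.

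\emph{Step 3 (from MMSE to failure).} Suppose an estimator produced $\hat H$ with $|\hat H\cap H^*|\ge\lambda m$ with probability at least $\delta$. Using the estimator $c\,x_{\hat H}$ for a small constant $c$, the mean squared error is at most $m(1-2c\lambda\delta+c^2)+o(m)$. Optimizing over $c$ gives $\overline{\mathrm{mmse}}\le1-\lambda^2\delta^2+o(1)$, which contradicts $\overline{\mathrm{mmse}}\to1$. This rules out $\lambda$-recovery with probability $\delta$ for $D_{\mathrm{KL}}\le(1-\eta)\log(p_{1M}^{-1})$.
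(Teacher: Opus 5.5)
Your overall architecture matches the paper's: I-MMSE, monotonicity of the MMSE, almost exact recovery above threshold feeding a lower bound on $I$, and your Step 3, which is exactly \Cref{lem:recovery-to-mmse}. However, Step 2 as written has a genuine gap. With the symmetric window $[(1-\eta)s_c,(1+\eta)s_c]$, your inequality only shows that the average of $\overline{\mathrm{mmse}}$ over that window is at least about $1/2$. You then fall back on a saturation bound $i(s)\ge s/2-o(s)$ up to $(1-\eta')s_c$, to be obtained from a first-moment bound on the partition function. That route cannot succeed. Jensen gives $\mathbb E_{\mathbb P}\log(Z/M)\le\log\sum_\ell\Pr[|H_1\cap H_2|=\ell]\,e^{\ell s}$, since here $D_2=\mu^2=s$. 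The $\ell=m$ term alone equals $\frac1M e^{ms}=e^{m(s-s_c/2)}$. So the best this route can give is $i(s)\ge s/2-(s-s_c/2)_+-o(s_c)$, which is vacuous for $s\in(s_c/2,s_c)$. This is precisely the $D_2$ barrier of item 4 that you set out to bypass. Saturation just below $s_c$ is as strong as the ``nothing'' statement itself, so relying on it is circular.

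The missing idea is that no separate saturation estimate is needed. Put the upper point at $s_1=(1+\eta_2)s_c$ with $\eta_2\ll\eta$; this is allowed since part 1 holds for every fixed $\eta_2>0$. Then use only $i(s_0)\le s_0/2$ at $s_0=(1-\eta)s_c$ together with monotonicity of the MMSE:
\[
\tfrac12(\eta+\eta_2)s_c\,\overline{\mathrm{mmse}}(s_0)\;\ge\;\tfrac12\int_{s_0}^{s_1}\overline{\mathrm{mmse}}(t)\,\mathrm{d}t\;=\;i(s_1)-i(s_0)\;\ge\;(1-o(1))\tfrac{s_c}{2}-\tfrac{s_0}{2}.
\]
This gives $\overline{\mathrm{mmse}}(s_0)\ge(\eta-o(1))/(\eta+\eta_2)$, and letting $\eta_2\downarrow 0$ gives $\overline{\mathrm{mmse}}(s_0)\to1$. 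Equivalently, concavity of $I$ gives $I(s)\ge\frac{s}{s_1}I(s_1)$, which is the saturation you wanted, for free. This is the paper's argument; there it is phrased by contradiction, with $\eta_2$ chosen in terms of $\eta_1\delta^2\lambda^2$.

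The lower bound $i(s_1)\ge(1-o(1))s_c/2$ also removes your Step 1 obstacle. It follows directly from almost exact recovery at $s_1$ via the distance-based Fano inequality (\Cref{lem:reverse-fano}, where uniform sparsity gives $N_t^{\max}=M^{o(1)}$). So no tail integral of the MMSE is needed, and pointwise $\overline{\mathrm{mmse}}=o(1)$ would not control that tail anyway.

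Finally, there is a small slip in part 1: constant orders $1\pm\theta$ violate $\alpha_n,\beta_n\to1$. Take $1-\alpha_n=\beta_n-1=D_{\mathrm{KL}}^{-1/2}$ instead, which is legitimate since $D_{\mathrm{KL}}\ge(1+\eta)\log(p_{1M}^{-1})\to\infty$ in that regime.
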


Last but not least, we show in~\Cref{sec:topm} that for \emph{sufficiently large} $\+H$ and distributions satisfying \Cref{asm:renyi-stable-upper}, a simple top-$m$ selection based on the likelihood ratio recovers almost all hidden edges with high probability, if the estimator is allowed to output an arbitrary $m$-edge subset rather than a member of $\+H$. Notable examples of \emph{sufficiently large} $\+H$ include perfect matchings and Hamiltonian paths/cycles  (see~\Cref{tbl:p2}).

\subsection{Technical overview and paper organization}
The notion of uniform sparsity will play a recurring role throughout our analysis, as it allows a unified analysis of a broad class of structures, whether isomorphic or not. Two convenient sufficient conditions for uniform sparsity are \emph{first moment flat} and  \emph{sufficiently large}. While our main focus is information-theoretic thresholds for estimators taking values in $\+H$, we note that for sufficiently large $\+H$ and distributions satisfying \Cref{asm:renyi-stable-upper}, a simple top-$m$ selection already recovers almost all hidden edges if arbitrary $m$-edge subsets are allowed.

Our upper bound for almost exact recovery in \Cref{thm:main} comes from the maximum likelihood estimator (MLE). We extend the approach of \cite{ding2020consistent}, which was designed for $2k$-NN graphs and for distributions whose log-MGF has a quadratic lowerbound near the boundary. We are able to generalize to a broad class of graphs -- \emph{uniformly sparse} graph collections, and to a wider family of \Renyi-stable distributions. In essence, we use Chernoff bound together with union bound to control the deviations of likelihood. In particular, the \emph{optimal} Chernoff exponent comes from balancing the deviations of edges inside and outside the hidden $H$. The first moment threshold dictates what Chernoff exponent can beat the union bound,  while the KL divergence dictates what Chernoff exponent can be achieved.
This is established in~\Cref{sec:almost-upper-bound}.

In \Cref{sec:it-lower-bounds}, we employ two information-theoretic methods: Our almost exact recovery lower bound (part (2) in \Cref{thm:main}) is based on a standard application of distance-based Fano's inequality, and is established in~\Cref{sec:almost-lower-bound}. Notably, this simple lower bound does not require the assumption of \Renyi-stability of $\+P,\+Q$. The key to a tight characterization is a good estimate for the mutual information, and controlling the growth rate of overlaps for uniformly sparse $\+H$.
For partial recovery, however, standard Fano's inequality based on mutual information does not yield a strong enough converse bound. Our partial recovery lower bound (part (3) in \Cref{thm:main}) is based on working with the Sibson's $\alpha$-mutual information instead of the mutual information. We establish a distance-based variant of Fano's inequality from Sibson's $\alpha$-mutual information \cite{esposito2024sibson}, from which we derive our lower bound by choosing an appropriate $\alpha$ in \Cref{sec:partial-lower-bound}. The existence of such $\alpha\neq 1$ is guaranteed by \Renyi stability.

\Cref{sec:aon} focuses on the lower bound for the `Nothing' phase and  sufficient conditions for AoN. \Cref{sec:12renyi-aon} generalizes the second moment method from \cite{mossel2023sharp} to weighted distributions, and substituting the corresponding ``signal-to-noise ratio'' part with $D_2$. This proves part (4) of \Cref{thm:main}, demonstrating AoN for distributions with $D_2=(1+o(1))\Dkl(\+P\|\+Q)$. For our Gaussian AoN result in \Cref{thm:aon},  we show in \Cref{sec:gaussian-aon} that a sharp transition for almost exact recovery can be lifted to an AoN phenomenon with the help of an I-MMSE relation. I-MMSE relation (restated at \Cref{lem:I-mmse}) is most famously known for Gaussian distributions, and it would be interesting to see if there could be relaxation of the I-MMSE relation where such lifting is possible. 

Last but not least, we also complement with an example where our partial recovery lower bound is tight in~\Cref{sec:partial-tightness}.

\subsection{Related works}
There has been significant recent progress on the problems of almost exact recovery and partial recovery. The most important part of our work is a unified approach across a broad class of graph structures, or a broad class of distributions, or both. In specific instances however, there are some prior works that give more refined phase transition than ours. We explain these instances where we fall short below.

\cite{ding2020consistent} studied almost exact recovery in $2k$-NN graphs under certain assumptions on the distribution. 
They established a threshold on the KL divergence of the distribution: when $D_{\mathrm{KL}}(\+P \| \+Q) = (1 + \eta) \log(n) / k$, almost exact recovery is achievable, and they also proved that this threshold is information-theoretically necessary. Our result for almost exact recovery is a generalization both in terms of the hidden structure, and in terms of the distributions $\+P$ and $\+Q$.

\cite{mossel2023sharp} investigated recovery in the classical \Erdos-\Renyi random model $G(n,p)$. They studied both linear scale and exponential scale AoN and demonstrated that many planted subgraph recovery problems exhibit AoN.
The requirement they impose on graphs to exhibit exponential scale AoN is precisely first moment flatness, together with either $\log(p_{1M}^{-1})=\omega(1)$ or $v(H)=n^{o(1)}$, and they characterized linear scale AoN for graph families that satisfy more stringent structural conditions.
While \cite{mossel2023sharp} showed AoN for $\+P$ and $\+Q$ being Bernoulli distributions, we show that under a different parameterization of Bernoullis, there is no AoN. In fact, our partial recovery lower bound can be sharp.

\cite{moharrami2021planted} investigated the planted matching problem under exponential distributions on edge weight. In the All phase, they establish a linear scale phase transition, while in the Nothing phase, they employ the method of local weak convergence and analyze an associated system of ordinary differential equations (ODEs) to derive exact expressions for the expected overlap. After that, \cite{ding2023planted} provides an explicit lower bound and confirms the infinite-order phase transition conjectured in \cite{semerjian2020recovery}. Subsequently, \cite{moharrami2025planted} studied the problem of recovering a planted Hamiltonian cycle or a spanning tree using the minimum spanning tree (MST). 
They derived, via a fixed-point equation, the asymptotic relation between a constant parameter $\mu$ and the fraction of edges recovered. Rephrased in terms of the KL divergence, it corresponds to
$D_{\mathrm{KL}}(\mathcal{P}\|\mathcal{Q}) = (1+o(1)) \log(p_{1M}^{-1}/\mu)$,
which implies an AoN transition on the exponential scale, while there is no AoN in the linear scale: partial recovery remains possible and the fraction of recovery varies smoothly with $\mu$.

\begin{table}[h!]
\caption{Summary of works with graph structures, distributions, and properties.}
\centering
\begin{tabular}{|c|c|c|c|}
\hline
Work & Graph Structure & Distribution & Properties \\ \hline
\cite{ding2020consistent} & $2k$-NN graphs & \makecell{Gaussian /\\ Assumption 2 of \cite{ding2020consistent}} & almost exact recovery \\ \hline
\cite{mossel2023sharp} & \makecell{first moment flat\\ and either $\log(p_{1M}^{-1})=\omega(1)$\\ or $v(H)=n^{o(1)}$} & $\mathcal{P} = \mathrm{Bern}(1),\mathcal{Q} = \mathrm{Bern}(q)$ & exponential AoN \\ \hline
\cite{mossel2023sharp} & \makecell{sufficiently dense\\delocalized\\almost balanced} & $\mathcal{P} = \mathrm{Bern}(1),\mathcal{Q} = \mathrm{Bern}(q)$ & linear AoN \\ \hline
\makecell{\cite{moharrami2021planted}\\ \cite{ding2023planted}} & perfect matchings & $\mathcal{P} = \mathrm{Exp}(1/\mu),\mathcal{Q} = \mathrm{Exp}(1/n)$ & \makecell{almost exact recovery\\ in linear scale} \\ \hline
\cite{moharrami2025planted} & \makecell{spanning trees and\\ Hamiltonian paths} & $\mathcal{P} = \mathrm{Exp}(1/\mu),\mathcal{Q} = \mathrm{Exp}(1/n)$ &  exponential AoN \\ \hline
Our & uniformly sparse & Under \Cref{asm:renyi-stable-upper} & almost exact recovery \\ \hline
Our & uniformly sparse & \makecell[l]{1. $\+P = N(\mu, 1)$, $\+Q = N(0, 1)$\\
2. $\+P=\mathrm{Bern}(1-o(1))$,\\ \quad $\+Q=\mathrm{Bern}(o(1))$\\ 3. $\+P=\mathrm{Exp}(\lambda_1)$,\\\quad  $\+Q=\mathrm{Exp}(\lambda_2)$ with $\lambda_1 \gg \lambda_2$ } & exponential AoN \\ \hline
Our & uniformly sparse & $\+P = \mathrm{Bern}(\lambda)$, $\+Q = \mathrm{Bern}(\frac{(1-\lambda) m}{N-m})$ & no AoN \\ \hline
\end{tabular}
\label{tab:summary}
\end{table}
\vspace{-10pt}

\subsection{Discussions and open problems}

Although our work unifies a broad class of hidden weighted graph recovery problems across various graph structures and edge weight distributions, several limitations and open problems remain. In particular, beyond uniformly sparse graph families, a complete understanding is still lacking. 

In this paper, we mainly focus on the study of almost exact recovery and partial recovery. One may be tempted to ask for a unified understanding of the exact recovery thresholds. We note, however, that it would likely require a more detailed study of a local structure, which may vary across different graph structures, such as the ``crossing'' structure in \cite{BVJ20}.
Likewise, a unified understanding of AoN would also be desirable. On the distributional side, it remains unclear for which pairs of distributions $(\mathcal{P},\mathcal{Q})$ AoN should occur: at present, we only identify two independent sufficient, but not necessary, conditions that guarantee AoN. On the graph-structural side, \cite{mossel2023sharp} further shows that, under the $G(n,p)$ planted model, for two broad class of graph collections, they established necessary and sufficient conditions for AoN to hold at linear or exponential scales. Furthermore, a very recent work \cite{lee2025fundamental} investigated more general graph structures under the $G(n,p)$ planted model and analyzed their MMSE curves in regimes where AoN may fail to occur.

AoN also appears to be intimately tied to computational-statistical gaps in many statistical inference problems. We focus mainly on the statistical limits of recovery in this paper, and it would be nice to have a unified understanding of where AoN, or computational-statistical gaps could be found.

\section{Almost exact recovery upper bound by MLE}
\label{sec:almost-upper-bound}

We adapt the framework of \cite{ding2020consistent} to uniformly sparse $\+H$ and \Renyi-stable distributions.

Since $H^*$ is chosen uniformly at random from $\+H$, the MLE coincides with the maximum a posteriori (MAP) estimator. Let $\hat{H}_{\mathrm{MLE}}$ denote the graph in $\+H$ that maximizes the likelihood. By definition:
\begin{equation}
  \begin{aligned}
    \hat{H}_{\mathrm{MLE}} &= \argmax_{H \in \mathcal{H}} \prod_{e \in H} \frac{\mathrm{d}\mathcal{P}}{\mathrm{d}\mathcal{Q}}(A_e) = \argmax_{H \in \mathcal{H}} \sum_{e \in H} L_e,
  \end{aligned}
\end{equation}
where $A_e$ is the weight of edge $e$ in the observed graph $A$, and
$L_e := \log \frac{\mathrm{d}\mathcal{P}}{\mathrm{d}\mathcal{Q}} (A_e)$ represents the log-likelihood ratio of $A_e$. Since $\mathcal{P} \ll \mathcal{Q}$, this Radon--Nikodym derivative is well-defined, and $L_e$ is understood as an extended-real random variable. Let $\+X$ and $\+Y$ denote the distributions of $\log \frac{\mathrm{d}\mathcal{P}}{\mathrm{d}\mathcal{Q}}$ under the measures $\mathcal{P}$ and $\mathcal{Q}$, respectively. Then, for any edge $e$, the log-likelihood $L_e$ independently follows the distribution $\+X$ if $e \in H^*$, and follows the distribution $\+Y$ otherwise. 

Define $L(H) := \sum_{e \in H} L_e$. The MLE selects a graph $\hat{H}_{\mathrm{MLE}}$ that maximizes $L(H)$. We aim to show that, with high probability, every maximizer is close to the hidden graph $H^*$. In fact, we will prove a slightly stronger statement: there exists a sequence $\delta_n \to 0$ such that, with high probability, no graph $H \in \+H$ whose distance to $H^*$ exceeds $\delta_n m$ satisfies $L(H) > L(H^*)$ (the distance is defined as the number of edges that appear in $H^*$ but not in $H$).

Formally, the above analysis can be written as:
\begin{equation}\label{eq:MLE-1}
  \begin{aligned}
    \Pr\Bigl[|H^* \setminus \hat{H}_{\mathrm{MLE}}| > \delta_n m\Bigr] &\le \Pr\Bigl[\exists H \in \+H, |H^* \setminus H| > \delta_n m,\ L(H) > L(H^*)\Bigr].
  \end{aligned}
\end{equation}
Since every graph in $\+H$ has $m$ edges, $|H^*\setminus H|=|H\setminus H^*|$. Define $\+H(H^*,\ell) := \{H:|H\setminus H^*| = \ell, H \in \+H\}$. By the union bound, we have:
\begin{equation}\label{eq:MLE-Union}
  \begin{aligned}
    \Pr\Bigl[\exists H \in \+H, |H^* \setminus H| > \delta_n m,\ L(H) > L(H^*)\Bigr] &\le \sum_{\ell=\lceil \delta_n m\rceil}^m \Pr\Bigl[\exists H \in \+H(H^*,\ell): L(H) > L(H^*)\Bigr].
  \end{aligned}
\end{equation}

\begin{lemma}\label{lem:almost-upper}
  For any uniformly sparse $\+H$, when $D_{\mathrm{KL}}(\+P \| \+Q) \ge (1+\eta) \log(p_{1M}^{-1})$ for some constant $\eta > 0$, there exist sequences $\epsilon_n, \delta_n \to 0$ such that:
  \begin{equation}\label{eq:MLE-chernoff}
  \begin{aligned}
    \Pr\Bigl[&\exists H \in \+H(H^*,\ell): L(H) > L(H^*)\Bigr]\\
    &\le 
    \exp\left( -(1+o(1))\frac{\ell\eta}{4\epsilon_n}\right) +
    \exp\left( -\frac{\ell \eta \log(p_{1M}^{-1})}{4} + o(\log M)\right)\quad \forall \ell \in [\delta_n m, m]\\
  \end{aligned}
  \end{equation}
\end{lemma}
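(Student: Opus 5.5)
Fix $\ell\in[\delta_n m,m]$ and a competitor $H\in\+H(H^*,\ell)$. Then $L(H)-L(H^*)=\sum_{e\in H\setminus H^*}L_e-\sum_{e\in H^*\setminus H}L_e$, where the first sum has $\ell$ i.i.d.\ terms with law $\+Y$ and the second has $\ell$ i.i.d.\ terms with law $\+X$. The second sum, $S^*:=\sum_{e\in H^*\setminus H}L_e$, depends on $H$ only through the removed set $H^*\setminus H$. Following \cite{ding2020consistent}, I will introduce a threshold $t=\ell\tau$ and split the event:
\[
\{\exists H: L(H)>L(H^*)\}\subseteq \{\exists H: S^*(H)\le \ell\tau\}\cup\{\exists H: \textstyle\sum_{e\in H\setminus H^*}L_e>\ell\tau\}.
\]
The two terms in \eqref{eq:MLE-chernoff} will come from these two events. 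I will take $\tau=(1-\theta)D_{\mathrm{KL}}-$ (a slack), with a small constant $\theta$ chosen in terms of $\eta$.

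\textbf{Outside-edges term.} For a fixed $H$, I will use the Chernoff bound with exponent $1$. Since $\E{e^{L_e}}=1$ under $\+Q$, this gives $\Pr[\sum L_e>\ell\tau]\le e^{-\ell\tau}$. Next I will union bound over $H\in\+H(H^*,\ell)$. Its cardinality is $M\Pr_{H_2}[|H^*\cap H_2|=m-\ell]$. By uniform sparsity applied to the overlap $m-\ell$ (the maximum in Definition~\ref{def:uniformly-sparse} is over all $H_1$, so it covers $H^*$), this is at most
\[
M\,p_{1M}^{\,m-\ell}\,e^{o(\log M)}=e^{\ell\log(p_{1M}^{-1})+o(\log M)},
\]
using $M=p_{1M}^{-m}$. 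With $\tau\ge(1+\eta/2)\log(p_{1M}^{-1})$, which follows from $D_{\mathrm{KL}}\ge(1+\eta)\log(p_{1M}^{-1})$ once $\theta$ is small enough, the bound is $\exp(-\ell\eta\log(p_{1M}^{-1})/4+o(\log M))$ after absorbing slack. This is the second term.

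\textbf{Inside-edges term.} The event depends only on which $\ell$-subset $R=H^*\setminus H$ of $H^*$ is removed, so I union bound over at most $\binom{m}{\ell}\le (em/\ell)^\ell\le (e/\delta_n)^\ell$ subsets. For each $R$, I will apply a lower-tail Chernoff bound with parameter $\alpha_n$ from Assumption~\ref{asm:renyi-stable-upper}:
\[
\Pr_{\+P}\Bigl[\textstyle\sum_{R}L_e\le\ell\tau\Bigr]\le \exp\bigl(\ell(1-\alpha_n)(\tau-D_{\alpha_n})\bigr),
\]
because $\E[\+P]{e^{-(1-\alpha)L}}=e^{-(1-\alpha)D_\alpha}$. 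Choosing $\tau$ a constant-fraction margin (of order $\eta\log(p_{1M}^{-1})$) below $D_{\alpha_n}\ge(1-\theta)D_{\mathrm{KL}}$, while still keeping $\tau>(1+\eta/2)\log(p_{1M}^{-1})$, makes the exponent at most $-c\,\ell\eta(1-\alpha_n)D_{\mathrm{KL}}$. Set $\epsilon_n:=1/((1-\alpha_n)D_{\mathrm{KL}})\to0$; this tends to zero by the assumption $1-\alpha_n=\omega(1/D_{\mathrm{KL}})$. The per-subset bound is then $\exp(-c\ell\eta/\epsilon_n)$. Finally I choose $\delta_n\to0$ slowly enough that $\log(e/\delta_n)=o(1/\epsilon_n)$, for example $\delta_n=\exp(-\epsilon_n^{-1/2})$. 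The combinatorial factor is then negligible, yielding $\exp(-(1+o(1))\ell\eta/(4\epsilon_n))$ after tuning constants.

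\textbf{Main obstacle.} The delicate step is balancing $\tau$ and $\theta$. The threshold $\tau$ must sit strictly between $\log(p_{1M}^{-1})(1+\Omega(\eta))$ and $D_{\alpha_n}(1-\Omega(\eta))$ to reach the stated constants $\eta/4$. It also requires checking that the $o(\log M)$ term from uniform sparsity is uniform in $\ell$, which the max over $\ell$ in the definition provides. The case where $D_{\mathrm{KL}}$ is much larger than $\log(p_{1M}^{-1})$ is benign, since then the gaps only grow.
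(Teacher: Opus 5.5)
Your proposal is correct and follows essentially the same route as the paper. Both split the event at the threshold $\ell\tau$. For the outside edges, both use a Chernoff bound with exponent $1$ under $\+Q$ together with the uniform-sparsity count $|\+H(H^*,\ell)|\le \exp(\ell\log(p_{1M}^{-1})+o(\log M))$. For the inside edges, both use a Chernoff bound at order $\alpha_n$ from Assumption~\ref{asm:renyi-stable-upper} together with $\binom{m}{\ell}\le(e/\delta_n)^\ell$, and you even match the paper's choice $\delta_n=\exp(-\epsilon_n^{-1/2})$.

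One point should be stated carefully. With $\tau$ a fixed multiple such as $(1+\eta/2)\log(p_{1M}^{-1})$, the margin $D_{\alpha_n}-\tau$ is $\Omega(D_{\mathrm{KL}})$, not merely of order $\eta\log(p_{1M}^{-1})$. That is exactly what your normalization $\epsilon_n=1/((1-\alpha_n)D_{\mathrm{KL}})$ needs. This is in fact slightly more careful than the paper's choice $\lambda=-1/(\epsilon_n\log(p_{1M}^{-1}))$, which tacitly requires $(1-\alpha_n)\log(p_{1M}^{-1})\to\infty$.
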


We defer this proof to \Cref{sec:missing-proofs3}.

{\noindent\textbf{Proof of the almost exact recovery upper bound in \Cref{thm:main}}.}
Combining~\cref{eq:MLE-1},~\cref{eq:MLE-Union} and~\cref{eq:MLE-chernoff}, we have:
\begin{equation}
  \begin{aligned}
    \Pr\Bigl[|H^* \setminus \hat{H}_{\mathrm{MLE}}| > \delta_n m\Bigr] 
    &\le \sum_{\ell = \lceil \delta_n m\rceil}^{m} \Pr\Bigl[\exists H \in \+H(H^*,\ell): L(H) > L(H^*)\Bigr]\\
    &\le \sum_{\ell = \lceil \delta_n m\rceil}^{\infty} \exp\left( -(1+o(1))\frac{\ell\eta}{4\epsilon_n}\right) + \exp\left( -\frac{\ell \eta}{4} \cdot \log(p_{1M}^{-1}) + o(\log M)\right)\\
    &= \frac{\exp(- (1+o(1))\frac{\delta_n m \eta}{4\epsilon_n}) }{1 - \exp(-(1+o(1))\frac{\eta}{4\epsilon_n})} + \frac{\exp(-\frac{\delta_n m \eta\log(p_{1M}^{-1})}{4} + o(\log M))}{1 - \exp(-\frac{\eta\log(p_{1M}^{-1})}{4})}
    = o(1).
    \end{aligned}
\end{equation}
Thus, the MLE achieves almost exact recovery.
\qed

\section{Information-theoretic recovery lower bounds}\label{sec:it-lower-bounds}

In this section, we use information-theoretic tools to analyze the necessary conditions for almost exact recovery and partial recovery. For uniformly sparse $\mathcal{H}$, our lower bound for almost exact recovery coincides with the upper bound established in \Cref{sec:almost-upper-bound}. For partial recovery, the performance varies significantly across different distributions. While some distributions exhibit a sharp AoN phase transition, some other distributions show a phenomenon where the maximum overlap aligns with our partial recovery lower bound, decreasing linearly with the KL divergence.

\subsection{Almost exact recovery lower bound by distance-based Fano's inequality}
\label{sec:almost-lower-bound}

Our information-theoretic lower bound for almost exact recovery is based on an application of the distance-based Fano’s inequality.
To do so, inference problems are typically modeled as follows: the \emph{parameter} $X$ is drawn randomly from a set $\mathcal{X}$, the \emph{observation} $Y$ and the \emph{estimator} $\hat{X}$ form a Markov chain $X \rightarrow Y \rightarrow \hat{X}$.
The marginal distributions of $X$ and $Y$ are denoted by $P_X$ and $P_Y$, respectively, and their joint distribution is denoted by $P_{XY}$.

More specifically, in our setting, $\+X = \+H$ and the parameter $X$ is a graph sampled uniformly at random from $\+X$, the observation $Y$ is a weighted complete graph, where the edge weights in $Y$ are generated conditionally on $X$, such that edges in $X$ i.i.d. follow distribution $\mathcal{P}$, and all other edges i.i.d. follow $\mathcal{Q}$.

\begin{lemma}\label{lem:reverse-fano}
  For uniformly sparse $\+H$, if there exists an algorithm that achieves almost exact recovery, then it must hold that 
  \begin{equation}\label{eq:reverse-fano}
    I(X; Y) \ge (1 - o(1)) \log M.
  \end{equation}
\end{lemma}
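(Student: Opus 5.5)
We will apply the distance-based Fano inequality to the Markov chain $X \to Y \to \hat X$, with the distance $d(H,H') := |H\setminus H'|$ on $\+H$ and radius $t := \delta_n m$.

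\textbf{Fano step.} The inequality in its standard form (e.g., Duchi--Wainwright) reads
\begin{equation*}
\Pr[d(X,\hat X) > t] \ge 1 - \frac{I(X;Y) + \log 2}{\log\bigl(M / N_{\max}(t)\bigr)},
\qquad
N_{\max}(t) := \max_{H_1\in\+H} \bigl|\{H_2\in\+H : |H_1\setminus H_2| \le t\}\bigr|.
\end{equation*}
Almost exact recovery makes the left-hand side $o(1)$. Rearranging then gives
\begin{equation*}
I(X;Y) \ge (1-o(1))\bigl(\log M - \log N_{\max}(t)\bigr) - \log 2 .
\end{equation*}
Since $m\to\infty$ and $\log(p_{1M}^{-1})=\omega(1)$, we have $\log M = m\log(p_{1M}^{-1}) = \omega(1)$, so the $\log 2$ term is absorbed. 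It therefore suffices to prove $\log N_{\max}(\delta_n m) = o(\log M)$.

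\textbf{Counting the ball.} This is the main step, and it uses uniform sparsity. Fix $H_1$. A graph $H_2$ with $|H_1\setminus H_2|\le \delta_n m$ has overlap $\ell=|H_1\cap H_2| \ge (1-\delta_n)m$. Therefore
\begin{equation*}
\frac{N(H_1,t)}{M} = \sum_{\ell \ge (1-\delta_n)m} \Pr_{H_2\sim\+H}\bigl[|H_1\cap H_2|=\ell\bigr]
\le (m+1)\max_{\ell\ge(1-\delta_n)m} \exp\bigl(-\ell\log(p_{1M}^{-1}) + o(\log M)\bigr),
\end{equation*}
where the last inequality is exactly \Cref{def:uniformly-sparse}. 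Using $\ell\log(p_{1M}^{-1}) \ge (1-\delta_n)\log M$, this yields
\begin{equation*}
\log N(H_1,t) \le \delta_n \log M + \log(m+1) + o(\log M).
\end{equation*}
We need $\log(m+1) = o(\log M)$. This holds because $\log M = m\log(p_{1M}^{-1}) \ge m\cdot\omega(1)$, which is far larger than $\log m$. Taking the maximum over $H_1$ gives $\log N_{\max}(t) = o(\log M)$, provided $\delta_n\to0$.

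\textbf{Expected obstacle.} The delicate point is that the $o(\log M)$ term in \Cref{def:uniformly-sparse} is uniform over $H_1$ and $\ell$, which is what we need here. We must also make sure the Fano version used allows a sequence $\delta_n\to 0$ and not a fixed constant. If the $o(1)$ failure probability needs careful handling, we will use the form $\Pr[d> t]\ge 1-\frac{I+\log 2}{\log(M/N_{\max})}$ directly, which needs no extra care. Combining the Fano step with the counting bound gives $I(X;Y)\ge (1-o(1))\log M$.
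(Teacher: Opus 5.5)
Your proposal is correct and follows essentially the same route as the paper: the distance-based Fano inequality with $\rho(H_1,H_2)=|H_1\setminus H_2|$ and $t=\delta_n m$, with the ball size bounded by $M^{o(1)}$ via uniform sparsity. The differences are cosmetic. The paper sums the overlap bounds as a geometric series instead of using your $(m+1)$ factor, and it explicitly checks the Duchi--Wainwright applicability condition $|\mathcal X|-N_t^{\min}>N_t^{\max}$, which follows at once from your bound $N_{\max}=M^{o(1)}=o(M)$.
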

We defer this proof to \Cref{sec:missing-proofs4}.

\begin{lemma}\label{lem:mutual-information-upperbound}
  The mutual information has following upper bound in hidden weighted graph setting.
  \begin{equation}\label{eq:KL-information-upper-bound}
    I(X; Y) \le m \cdot D_{\mathrm{KL}}(\mathcal{P} \| \mathcal{Q}).
  \end{equation}
\end{lemma}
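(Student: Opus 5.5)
We use the standard ``golden formula'' for mutual information: for any distribution $R_Y$ on the observation space with $P_{Y|X=x}\ll R_Y$ for all $x$,
\begin{equation*}
I(X;Y) = \mathbb{E}_{X}\bigl[D_{\mathrm{KL}}(P_{Y|X}\,\|\,P_Y)\bigr] \le \mathbb{E}_{X}\bigl[D_{\mathrm{KL}}(P_{Y|X}\,\|\,R_Y)\bigr].
\end{equation*}
The inequality holds because the difference between the two sides equals $D_{\mathrm{KL}}(P_Y\|R_Y)\ge 0$. We take $R_Y = \+Q^{\otimes N}$, the pure-noise distribution in which every one of the $N=\binom{n}{2}$ edge weights is drawn i.i.d.\ from $\+Q$. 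This is a valid reference measure since $\+P\ll\+Q$ is assumed throughout.

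Next we compute the conditional divergence for a fixed hidden graph $X=H$. Conditionally on $H$, the law is the product $P_{Y|X=H} = \bigotimes_{e\in H}\+P \otimes \bigotimes_{e\notin H}\+Q$. KL divergence tensorizes over product measures, so
\begin{equation*}
D_{\mathrm{KL}}\bigl(P_{Y|X=H}\,\|\,\+Q^{\otimes N}\bigr) = \sum_{e\in H} D_{\mathrm{KL}}(\+P\|\+Q) + \sum_{e\notin H} D_{\mathrm{KL}}(\+Q\|\+Q) = m\, D_{\mathrm{KL}}(\+P\|\+Q),
\end{equation*}
because every $H\in\+H$ has exactly $m$ edges. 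This value does not depend on $H$, so averaging over the uniform choice of $X$ gives exactly $I(X;Y)\le m\,D_{\mathrm{KL}}(\+P\|\+Q)$.

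The only point needing care is measure-theoretic. If $D_{\mathrm{KL}}(\+P\|\+Q)=\infty$, the bound is trivial. Otherwise the golden formula holds in the extended-real sense via the chain rule $D(P_{XY}\|P_X\otimes R_Y) = I(X;Y) + D(P_Y\|R_Y)$. Here $X$ takes finitely many values, so no integrability issues arise. There is no substantial obstacle; the essential choice is to compare against the null model $\+Q^{\otimes N}$ rather than against the mixture $P_Y$ directly.
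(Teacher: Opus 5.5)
Your proposal is correct and follows essentially the same route as the paper: the paper likewise writes $I(X;Y)=\mathbb{E}_{P_{XY}}\bigl[\log \tfrac{\mathrm{d}P_{Y|X}}{\mathrm{d}Q_Y}\bigr]-D_{\mathrm{KL}}(P_Y\|Q_Y)\le \mathbb{E}_{P_X}\bigl[D_{\mathrm{KL}}(P_{Y|X}\|Q_Y)\bigr]$, takes $Q_Y=\mathcal{Q}^{\otimes N}$, and uses additivity of KL over the product to get $m\,D_{\mathrm{KL}}(\mathcal{P}\|\mathcal{Q})$. Your extra remarks on the $D_{\mathrm{KL}}=\infty$ case and absolute continuity are fine and add a bit of rigor the paper leaves implicit.
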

We defer this proof to \Cref{sec:missing-proofs4}.

{\noindent\textbf{Proof of the almost exact recovery lower bound in \Cref{thm:main}}.}

Combining \Cref{eq:reverse-fano} with \Cref{eq:KL-information-upper-bound} finishes the proof.
\qed

\subsection{Partial recovery lower bound using Sibson's \texorpdfstring{$\alpha$}{alpha}-Mutual Information}
\label{sec:partial-lower-bound}

The standard Fano's inequality does not yield a strong enough converse bound. For example, when the mutual information between the parameter and the observation is only a constant fraction of the parameter's entropy, the inequality can at best conclude that the probability of partial recovery is no more than another constant. Therefore, we need stronger measures than mutual information to capture the fundamental limits of recovery more precisely in the low-information regime.

In \cite{esposito2024sibson}, the authors studied Sibson's $\alpha$-mutual information and proposed Fano-type inequalities based on Sibson's mutual information. We need a distance-based variant as follows:
\begin{lemma}\label{lem:sibson-fano}
  For any $\alpha>1$ and $X$ distributed over a finite set $\mathcal{X}$, let $\rho: \mathcal{X} \times \mathcal{X} \to \mathbb{R}_{\ge 0}$ be a symmetric distance function. For any estimator $\hat{X}$ that forms a Markov chain $X \rightarrow Y \rightarrow \hat{X}$, we have:
  \begin{equation}
    \Pr[\rho(X, \hat{X}) \le t] \le \left( p^\star_t e^{I_\alpha(X;Y)} \right)^{\frac{\alpha-1}{\alpha}},
  \end{equation}
  where $p^\star_t := \max_{x \in \mathcal{X}} \Pr_{X}[\rho(x, X) \le t]$ and $I_\alpha(X; Y)$ is Sibson's $\alpha$-mutual information of order $\alpha>1$ defined as:
  \begin{equation}
    I_{\alpha}(X; Y) := \min_{Q_Y} D_{\alpha}(P_{XY} \| P_X Q_Y).
  \end{equation}
\end{lemma}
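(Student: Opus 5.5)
We prove this by a change-of-measure argument combined with Hölder's inequality, in the spirit of the data-processing proof of Fano-type bounds for Rényi divergences. Fix any distribution $Q_Y$ on the observation space, and let $E$ be the event $\{\rho(X,\hat X)\le t\}$. Since $X\to Y\to\hat X$ is a Markov chain, $\hat X$ is obtained from $Y$ by a (possibly randomized) kernel $P_{\hat X\mid Y}$. Pushing both $P_{XY}$ and $P_XQ_Y$ through this same kernel gives joint laws $P_{X\hat X}$ and $P_X Q_{\hat X}$ of $(X,\hat X)$, where $Q_{\hat X}$ is the law of $\hat X$ when $Y\sim Q_Y$ independently of $X$. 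The data-processing inequality for Rényi divergence then gives $D_\alpha(P_{X\hat X}\|P_XQ_{\hat X})\le D_\alpha(P_{XY}\|P_XQ_Y)$.

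The key step bounds the probability of $E$ under $P_{X\hat X}$ in terms of its probability under $P_XQ_{\hat X}$. Write $f=\mathrm{d}P_{X\hat X}/\mathrm{d}(P_XQ_{\hat X})$; absolute continuity can be assumed, since otherwise the divergence is infinite and the bound is trivial. For $\alpha>1$, apply Hölder's inequality with exponents $\alpha$ and $\alpha/(\alpha-1)$ under the reference measure $P_XQ_{\hat X}$:
\[
\Pr_{P_{X\hat X}}[E]=\mathbb{E}_{P_XQ_{\hat X}}[f\,\mathbf 1_E]\le \left(\mathbb{E}_{P_XQ_{\hat X}}[f^\alpha]\right)^{1/\alpha}\left(\Pr_{P_XQ_{\hat X}}[E]\right)^{(\alpha-1)/\alpha}.
\]
The first factor equals $\exp\!\left(\frac{\alpha-1}{\alpha}D_\alpha(P_{X\hat X}\|P_XQ_{\hat X})\right)$.

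Under the product measure, $X\sim P_X$ is independent of $\hat X$. Conditioning on $\hat X=\hat x$ and using the symmetry of $\rho$,
\[
\Pr_{P_XQ_{\hat X}}[E]=\mathbb{E}_{\hat x\sim Q_{\hat X}}\Pr_X[\rho(\hat x,X)\le t]\le p^\star_t .
\]
This uses that $\hat X$ takes values in $\mathcal X$; if it does not, either project it into $\mathcal X$ or define $p^\star_t$ with the maximum over the range of $\hat X$. Combining the last two displays with data processing yields
\[
\Pr[E]\le \left(p^\star_t\, e^{D_\alpha(P_{XY}\|P_XQ_Y)}\right)^{(\alpha-1)/\alpha}.
\]
Taking the infimum over $Q_Y$ gives $I_\alpha(X;Y)$. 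The minimum is attained by Sibson's closed form, and in any case an infimum suffices since the bound holds for every $Q_Y$.

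The only delicate point is verifying that the Markov structure makes the processed reference measure a product $P_XQ_{\hat X}$. This holds because the kernel acts on $Y$ alone, so $X$ stays independent of $\hat X$ under the reference measure. The remaining measure-theoretic details (randomized estimators, extended-real densities) are routine.
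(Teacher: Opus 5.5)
Your proof is correct. It reaches exactly the paper's inequality by a more elementary route.

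\textbf{The paper's route.} The paper invokes the variational (Donsker--Varadhan-type) representation of Sibson's $I_\alpha$ from Theorem 5.3 of \cite{esposito2024sibson}, evaluated at the optimal $Q^*_{\hat X}$. It uses data processing for Sibson's information, $I_\alpha(X;Y)\ge I_\alpha(X;\hat X)$, and plugs in the test function $f=\frac{\beta}{\alpha-1}\mathbb{I}[\rho(X,\hat X)\le t]$. It then bounds $\hat q\le p^\star_t$, rearranges, and sends $\gamma=e^\beta-1\to\infty$.

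\textbf{Your route.} You fix an arbitrary $Q_Y$ and use data processing for the R\'enyi divergence itself. Writing $P=P_{X\hat X}$ and $Q=P_XQ_{\hat X}$, you apply H\"older with exponents $\alpha$ and $\alpha/(\alpha-1)$ to get $P(E)\le Q(E)^{(\alpha-1)/\alpha}\exp\bigl(\frac{\alpha-1}{\alpha}D_\alpha(P\|Q)\bigr)$ directly. This is precisely what the paper's variational bound with the indicator test function reduces to in the limit $\gamma\to\infty$, so the two arguments give identical conclusions.

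\textbf{What each buys.} Your version is self-contained. It needs neither the variational formula nor the existence of the minimizer $Q^*_Y$, since the bound holds for every $Q_Y$ before you take the infimum. It also avoids the rearrange-and-limit step. The paper's version plugs directly into the existing Sibson--Fano framework (Theorem 7.2 of \cite{esposito2024sibson}). Its finite-$\gamma$ intermediate bound could in principle be optimized over $\gamma$ rather than sent to the limit, though the paper does not use this.

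Your handling of the side issues is sound. Absolute continuity is either preserved under the kernel or else the bound is trivial. Symmetry of $\rho$ together with $\hat X\in\mathcal X$ is exactly what is needed for $\Pr_{P_XQ_{\hat X}}[E]\le p^\star_t$, which is also what the paper implicitly uses.
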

We defer the proof to \Cref{sec:missing-proofs5}.

\begin{lemma}\label{lem:sibson-upper}
  In the hidden weighted graph setting, the Sibson's $\alpha$-mutual information satisfies 
  \begin{equation}\label{eq:alpha-information-upper-bound}
    I_{\alpha}(X; Y) \le m \cdot D_\alpha(\mathcal{P} \| \mathcal{Q}).
  \end{equation}
\end{lemma}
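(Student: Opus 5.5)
Since Sibson's $\alpha$-mutual information is defined as a minimum over output distributions $Q_Y$, we may plug in any particular choice of $Q_Y$ and obtain an upper bound. The natural choice is the pure-noise distribution $Q_Y = \+Q^{\otimes N}$, under which every edge weight is i.i.d.\ $\+Q$. This gives
\[
I_\alpha(X;Y) \le D_\alpha\bigl(P_{XY}\,\|\,P_X\otimes \+Q^{\otimes N}\bigr).
\]
The remaining task is to evaluate the right-hand side. We use the closed form of Sibson's information for a fixed $Q_Y$. Writing $P_{Y|X=x}$ for the conditional law,
\[
D_\alpha\bigl(P_{XY}\,\|\,P_X Q_Y\bigr)=\frac{1}{\alpha-1}\log \sum_{x} P_X(x)\, \exp\Bigl((\alpha-1) D_\alpha\bigl(P_{Y|X=x}\,\|\,Q_Y\bigr)\Bigr).
\]
This identity is immediate from the definition: the $\alpha$-moment of the density ratio $\frac{dP_{XY}}{d(P_XQ_Y)}(x,y)=\frac{dP_{Y|X=x}}{dQ_Y}(y)$ factorizes as an average over $x$ of the conditional $\alpha$-moments.

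Next we compute each conditional divergence. For fixed $x=H\in\+H$, the law $P_{Y|X=H}$ is the product of $\+P$ on the $m$ edges of $H$ and $\+Q$ on the remaining $N-m$ edges. Rényi divergence tensorizes over product measures, and $D_\alpha(\+Q\|\+Q)=0$, so
\[
D_\alpha\bigl(P_{Y|X=H}\,\|\,\+Q^{\otimes N}\bigr)=m\,D_\alpha(\+P\|\+Q)
\]
for every $H$. Tensorization itself follows because $\mathbb E_{\+P}\bigl[(d\+P/d\+Q)^{\alpha-1}\bigr]$ factorizes over independent coordinates, and the $\+Q$-coordinates contribute a factor of $1$. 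Since this value does not depend on $H$, the average over $P_X$ is trivial, and we obtain $I_\alpha(X;Y)\le m\,D_\alpha(\+P\|\+Q)$.

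There is essentially no hard step. The only care needed is with absolute continuity: the assumption $\+P\ll\+Q$ ensures $P_{XY}\ll P_X\otimes\+Q^{\otimes N}$, so all densities are well defined. We should also handle the case where $D_\alpha(\+P\|\+Q)=\infty$, in which the bound holds trivially. The argument is valid for every $\alpha\in(0,1)\cup(1,\infty)$, and it recovers \Cref{lem:mutual-information-upperbound} in the limit $\alpha\to1$.
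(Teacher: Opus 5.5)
Your proposal is correct and matches the paper's proof essentially step for step. Like the paper, you plug in $Q_Y=\mathcal{Q}^{\otimes N}$, rewrite $D_\alpha(P_{XY}\|P_XQ_Y)$ as $\frac{1}{\alpha-1}\log\mathbb{E}_{P_X}\bigl[\exp\bigl((\alpha-1)D_\alpha(P_{Y|X}\|Q_Y)\bigr)\bigr]$, and use tensorization of R\'enyi divergence to get $D_\alpha(P_{Y|X}\|Q_Y)=m\,D_\alpha(\mathcal{P}\|\mathcal{Q})$ independently of $X$.
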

We defer the proof to \Cref{sec:missing-proofs5}.

{\noindent\textbf{Proof of the partial recovery lower bound in \Cref{thm:main}}.}
Let $\rho(X_1, X_2) = |X_1 \setminus X_2|$, by \Cref{def:uniformly-sparse}, for any uniformly sparse $\+H$, we have
\begin{equation}
\begin{aligned}
p^\star_{t} = \max_{H_1 \in \+H} \sum_{\ell=m-t}^{m} \Pr_{H_2 \sim \+H} [|H_1 \cap H_2| = \ell]
    &\le \sum_{\ell\ge m-t} \exp(-\ell \log(p_{1M}^{-1}) + o(\log M))\\
    &\le \frac{\exp(-(m-t) \log(p_{1M}^{-1}) + o(\log M))}{1-\exp(-\log(p_{1M}^{-1}))}\\
    &= \exp\left( - (m-t) \log(p_{1M}^{-1}) + o(\log M) \right).
\end{aligned}
\end{equation}
Applying \Cref{lem:sibson-fano} and \Cref{lem:sibson-upper}, the probability of recovering a $\lambda$-fraction of edges satisfies:
\begin{equation}
  \Pr[\rho(X, \hat{X}) \le (1-\lambda) m] \le \exp\left(\frac{\alpha-1}{\alpha}\left(-\lambda m \log(p_{1M}^{-1}) + o(\log M)  + m D_\alpha(\+P \| \+Q)\right)\right).
\end{equation}
If \Cref{asm:renyi-stable-lower} holds, when $D_{\mathrm{KL}}(\+P \| \+Q) \le (\lambda-\eta) \log(p_{1M}^{-1})$ for some constant $\eta$, there exists a constant $c > 0$ such that $\alpha = 1 + \frac{c}{\log(p_{1M}^{-1})}$ satisfies $\alpha\le \beta_n$, where $\beta_n$ is the order from \Cref{asm:renyi-stable-lower} with $\theta=\frac{\eta/2}{\lambda-\eta}$. By monotonicity of \Renyi divergence in its order,
\begin{equation}
  D_{\alpha}(\+P \| \+Q) \le \tp{1+\frac{\eta/2}{\lambda-\eta}} D_{\mathrm{KL}}(\+P \| \+Q) \le (\lambda-\eta/2) \log(p_{1M}^{-1}).
\end{equation}
\begin{equation}
  \begin{aligned}
  \implies \frac{\alpha-1}{\alpha} \left(-(1+o(1)) \lambda m \log(p_{1M}^{-1}) + m D_\alpha(\+P \| \+Q)\right)
  &= \frac{cm (-\eta/2 + o(1))}{\alpha} \to -\infty,
  \end{aligned}
\end{equation}
which implies $\Pr[\rho(X, \hat{X}) \le (1-\lambda) m] \to 0$ as $n \to \infty$. Therefore, any algorithm designed to recover $\lambda$ fraction of the hidden edges fails with high probability.
\qed

\section{AoN under natural distributions}\label{sec:aon}

In this section, we present several sufficient conditions for AoN. While we analyzed the threshold for the `All' phase in \Cref{sec:almost-upper-bound} and demonstrated its tightness at the exponential scale in \Cref{sec:almost-lower-bound}, in this section we employ two distinct methods to provide sufficient conditions for the `Nothing' phase. The first method generalizes the second moment method of~\cite{mossel2023sharp}. The second one uses the I-MMSE relation for Gaussian to lift our almost exact recovery threshold to an AoN phenomenon.
As a result, we are able to show AoN for several natural distributions, in addition to the \Erdos-\Renyi case studied in~\cite{mossel2023sharp}.

Before proceeding, we recall the definition of the minimum mean square error (MMSE). If we relax the recovery target from $\hat{H} \in \mathcal{H}$ to $\hat{X} \in \mathbb{R}^N$, let $X_e := \mathbb{I}[e \in H^*],\ \forall e \in [N]$ and define the mean square error (MSE) as $\mathrm{MSE}(\hat{X}) := \mathbb{E}[\|\hat{X} - X\|^2]$, then MSE is minimized when $\hat{X}$ is chosen to be the conditional expectation $\mathbb{E}[X \mid Y]$  and the MMSE is thus expressed as
\begin{equation}
  \mmse := \mathbb{E}\left[\|X - \mathbb{E}[X \mid Y]\|^2\right] = \mathbb{E}\left[\|X\|^2 - \langle X,\mathbb{E}[X \mid Y]\rangle\right] = \mathbb{E}\left[\|X\|^2\right] - \mathbb{E}\left[\|\mathbb{E}[X \mid Y]\|^2\right].
\end{equation}

Intuitively, if MMSE is close to $0$, one achieves almost exact recovery, while MMSE close to $m$ means almost no recovery (Nothing). In the subsequent proof, we will use MMSE to demonstrate the hardness of recovery in terms of $D_2$. 

\subsection{AoN under distributions with $D_{2}= (1+ o(1)) D_{\mathrm{KL}}$}\label{sec:12renyi-aon}

We recall the \Erdos-\Renyi setting in~\cite{mossel2023sharp}: the weight of every edge in the observed graph can only be $0$ or $1$, and only graphs where all edge weights are $1$ have an identical non-zero posterior probability. Consequently, their partition function can be expressed as the number of graphs whose edge weights are all $1$.  By generalizing their definition of partition function to real-valued distributions, a similar second moment method remains applicable with respect to the \emph{second-order \Renyi divergence} $D_2$. As discussed in \Cref{col:12renyi}, when $D_2(\+P\|\+Q) = (1 + o(1)) D_{\-{KL}}(\+P\|\+Q)$, AoN occurs and it is visualized in \Cref{fig:tikz-12renyi}.

\begin{figure}[h!]
\centering
\begin{tikzpicture}[scale=1.2]
\definecolor{lightred}{RGB}{255,200,200}
\definecolor{verylightred}{RGB}{255,230,230}
\definecolor{lightgreen}{RGB}{230,245,230}
\definecolor{lightblue}{RGB}{230,230,255}

\pgfmathsetmacro{\X}{6}
\pgfmathsetmacro{\Y}{2.5}
\pgfmathsetmacro{\A}{3.6}
\pgfmathsetmacro{\N}{3.6}

\coordinate (O) at (0,0);
\coordinate (X) at (\X,0);
\coordinate (Y) at (0,\Y);
\coordinate (A) at (\A,0);
\coordinate (N) at (\N,0);

\pgfmathsetmacro{\Factor}{\N/\A}
\coordinate (D) at ($(O)!\Factor!(\A,\Y)$);

\fill[lightred] (O) rectangle ($(N) + (Y)$);
\fill[verylightred] (D) -- ($(A) + (Y)$) -- ($(N) + (Y)$) -- cycle;

\fill[lightgreen] (A) rectangle ($(X) + (Y)$);

\fill[lightblue] (N) -- (D) -- ($(A) + (Y)$) -- (A) -- cycle;

\draw[dashed] (O) -- (D);
\draw (N) -- (D) -- ($(A) + (Y)$) -- ($(X) + (Y)$);

\pgfmathsetmacro{\U}{1.2}
\node at ($(0.5*\N, \U)$) {Nothing};
\node at ($(0.5*\A+0.5*\X,\U)$) {All};
\node at ($(0.5*\N,\Y-0.3)$) {Impossible region};
\node at ($(0.5*\A+0.5*\X,\Y-0.3)$) {Possible region};

\draw[->] (O) -- ($(X) + (0.5, 0)$) node[right] {$D_{\mathrm{KL}}$};
\draw[->] (O) -- ($(Y) + (0, 0.5)$) node[above, align=center] {Target edge recovery fraction};

\draw (\A,0.1) -- (\A,-0.1);
\draw (0.1,\Y) -- (-0.1,\Y);

\node[below] at (N) {$D_{\mathrm{KL}}\approx D_2\approx \log(p_{1M}^{-1})$};

\node[left] at (Y) {1};
\node[left] at (O) {0};
\end{tikzpicture}
\caption{Asymptotic visualization of \Cref{col:12renyi}.}
\label{fig:tikz-12renyi}
\end{figure}

As discussed earlier, the MMSE estimator is the posterior mean $\mathbb E[X\mid Y]$. Here and below we identify a graph with its edge-incidence vector in $\{0,1\}^N$. We will also use a posterior sample $\hat H\sim \Pr[H\mid Y]$ as a convenient device; its conditional mean equals the posterior mean. In the hidden weighted model, the posterior distribution is proportional to the product of the likelihood ratios associated with the edges in the corresponding hidden structure:
\begin{equation}
  \Pr[\hat H \mid Y] = \frac{1}{Z(Y)} \prod_{e \in \hat H} \frac{\-d\+P}{\-d\+Q}(Y_e),
\end{equation}
where $Z(Y)$ is the partition function
\begin{equation}
  Z(Y) := \sum_{H \in \+H} \prod_{e\in H} \frac{\-d\+P}{\-d\+Q}(Y_e).
\end{equation}

\begin{lemma}\label{lem:ZY-upperbound}
Let $M := |\+H|$, for any $\varepsilon > 0$ we have $\Pr[Z(Y) \leq \varepsilon M] \leq \varepsilon$.
\end{lemma}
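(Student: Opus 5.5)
The plan is to view $Z(Y)/M$ as a likelihood ratio between the planted distribution and the null distribution, then apply Markov's inequality under the null and transfer the bound via a change of measure.

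Let $\mathbb{Q}$ denote the null distribution on weighted complete graphs, under which every edge weight is i.i.d.\ $\+Q$. Let $\mathbb{P}$ denote the planted (observed) distribution of $Y$, namely the mixture over $H^*\sim\mathrm{Unif}(\+H)$. For a fixed $H$, the density of the conditional law of $Y$ given $H^*=H$ with respect to $\mathbb{Q}$ is $\prod_{e\in H}\frac{\mathrm d\+P}{\mathrm d\+Q}(Y_e)$, because the edges outside $H$ have the same law under both. Averaging over $H$ gives
\begin{equation*}
\frac{\mathrm d\mathbb{P}}{\mathrm d\mathbb{Q}}(Y) = \frac{1}{M}\sum_{H\in\+H}\prod_{e\in H}\frac{\mathrm d\+P}{\mathrm d\+Q}(Y_e) = \frac{Z(Y)}{M}.
\end{equation*}
This identity is valid since $\+P\ll\+Q$ by standing assumption.

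With $R := Z(Y)/M = \frac{\mathrm d\mathbb{P}}{\mathrm d\mathbb{Q}}$, I change measure:
\begin{equation*}
\Pr_{\mathbb P}[R\le\varepsilon] = \mathbb{E}_{\mathbb{Q}}\bigl[R\,\mathbb{I}[R\le\varepsilon]\bigr] \le \varepsilon\,\Pr_{\mathbb Q}[R\le \varepsilon]\le\varepsilon.
\end{equation*}
The first equality holds because $\mathbb{P}$ is absolutely continuous with respect to $\mathbb{Q}$, so integrating $R$ over an event under $\mathbb{Q}$ gives its $\mathbb{P}$-probability. This is exactly the claim $\Pr[Z(Y)\le\varepsilon M]\le\varepsilon$.

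I expect no real obstacle; the step needing care is the measure-theoretic justification of the Radon--Nikodym identity. I would handle it by writing the joint densities with respect to the product measure $\+Q^{\otimes N}$, noting that $\frac{\mathrm d\+P}{\mathrm d\+Q}$ may vanish on some sets (harmless here), and checking that $\mathbb P\ll\mathbb Q$ so the change of measure is legitimate.
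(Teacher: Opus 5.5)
Your proposal is correct and is essentially the paper's proof: you identify $Z(Y)/M$ as the likelihood ratio $\mathrm{d}\mathbb{P}/\mathrm{d}\mathbb{Q}$ between the planted and null models and then change measure, $\Pr_{\mathbb{P}}[Z(Y)\le\varepsilon M]=\mathbb{E}_{\mathbb{Q}}\bigl[(Z(Y)/M)\,\mathbb{I}[Z(Y)\le\varepsilon M]\bigr]\le\varepsilon$, exactly as the paper does (the paper just writes the normalization separately as $\mathbb{E}_{\mathbb{Q}}[Z(Y)]=M$). One cosmetic remark: the last inequality uses only the pointwise bound $R\le\varepsilon$ on the event, not Markov's inequality as your opening line suggests.
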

\begin{proof}
For the purpose of this lemma, we define two models. The planted model, denoted by $\^P$, is defined consistently with the rest of the paper: a graph $H^* \in \mathcal{H}$ is sampled uniformly at random, and subsequently, the observation $Y$ is obtained by independently sampling all edges, such that edges corresponding to $H^*$ are drawn from the distribution $\+P$, and all other edges are drawn from the distribution $\+Q$.
The null model, denoted by $\^Q$, is obtained by independently sampling all edges from the distribution $\+Q$. Then we have
\begin{equation}
    \mathbb{E}_{\^Q}[Z(Y)] = \sum_{H^* \in \+H} \prod_{e\in H^*} \mathbb{E}_{\+Q}\left[\frac{\-d\+P}{\-d\+Q}\right] = M.\\
\end{equation}

And
\begin{equation}
    \frac{\-d\^P}{\-d\^Q}(Y) = \frac{1}{M} \sum_{H^* \in \+H}\prod_{e\in H^*} \frac{\-d\+P}{\-d\+Q}(Y_e) = \frac{Z(Y)}{\mathbb{E}_{Q_Y}[Z(Y)]}.\\
\end{equation}

Therefore
\begin{equation}
\begin{aligned}
\Pr_{\^P}\left[Z(Y) \leq \varepsilon \^E_Q Z(Y)\right] &= \^E_{\^Q} \left[ \^I[Z(Y) \leq \varepsilon \^E_{\^Q} Z(Y)]\cdot \frac{Z(Y)}{\^E_{\^Q} Z(Y)} \right]\\
&\leq \varepsilon\ \Pr_{\^Q}[Z(Y) \leq \varepsilon \^E_{\^Q} Z(Y)] \leq \varepsilon.
\end{aligned}
\end{equation}
\end{proof}

The following lemma can be viewed as a generalization of the second moment approach in Lemma 3.11 of \cite{mossel2023sharp} from Bernoulli inference model to general distributions through $D_2$. 
Similar to contiguity arguments, we need to control a second moment, but we truncate the contribution from overlaps below $o(m)$, since the uniformly sparse condition does not yield a useful upper bound on the overlap probability in this regime.

\begin{lemma}\label{lem:MMSE-lower-bound}
If distributions $\+P, \+Q$ and two graphs $H_1, H_2$ sampled i.i.d. from the graph collection $\mathcal{H}$, satisfy
\begin{equation}\label{eq:MMSE-lower-bound-condition}
\sum_{\ell \geq \delta m} \Pr_{H_1,H_2 \sim \+H}[|H_1 \cap H_2| = \ell] \exp(\ell D_2(\+P\|\+Q)) = o(1),
\end{equation}
then $\mathrm{MMSE}(\+P,\+Q)/m \ge 1-\delta - o(1)$.
\end{lemma}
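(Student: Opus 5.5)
We will prove \Cref{lem:MMSE-lower-bound} by the posterior-sampling (Nishimori) reformulation of the MMSE combined with a truncated second-moment argument. First, since $\|X\|^2=m$, we have $\mathrm{MMSE}=m-\mathbb E\|\mathbb E[X\mid Y]\|^2$. Moreover $\mathbb E\|\mathbb E[X\mid Y]\|^2=\mathbb E\,|H^*\cap \hat H|$, where $\hat H$ is a posterior sample drawn independently of $H^*$ given $Y$. This holds because $\langle X,\mathbb E[X\mid Y]\rangle$ has the same expectation as $\|\mathbb E[X\mid Y]\|^2$, and $(H^*,\hat H)$ given $Y$ are i.i.d. from the posterior. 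It therefore suffices to show $\Pr_{\mathbb P}[|H^*\cap\hat H|\ge \delta m]=o(1)$, since then $\mathbb E|H^*\cap\hat H|\le \delta m+m\cdot o(1)$.

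Next we bound this probability. Write $w(H):=\prod_{e\in H}\frac{\mathrm d\mathcal P}{\mathrm d\mathcal Q}(Y_e)$, so that
\[
\Pr[|H^*\cap\hat H|\ge\delta m]=\mathbb E_{\mathbb P}\Bigl[\sum_{H_1,H_2:\,|H_1\cap H_2|\ge\delta m}\frac{w(H_1)w(H_2)}{Z(Y)^2}\Bigr].
\]
We intersect with the event $\{Z(Y)>\varepsilon M\}$. By \Cref{lem:ZY-upperbound}, the complement costs at most $\varepsilon$. On this event the integrand is at most $(\varepsilon M)^{-1}\sum_{|H_1\cap H_2|\ge\delta m} w(H_1)w(H_2)/Z(Y)$. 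We then change measure to the null model using $\frac{\mathrm d\mathbb P}{\mathrm d\mathbb Q}=Z(Y)/M$, which cancels the factor $1/Z$. This yields the bound
\[
\frac{1}{\varepsilon M^2}\sum_{|H_1\cap H_2|\ge\delta m}\mathbb E_{\mathbb Q}[w(H_1)w(H_2)].
\]

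Under $\mathbb Q$ the edges are independent, and the factors of $w(H_1)w(H_2)$ combine as follows. Each edge in exactly one of the graphs contributes $\mathbb E_{\mathcal Q}[\frac{\mathrm d\mathcal P}{\mathrm d\mathcal Q}]=1$. Each edge in the overlap contributes $\mathbb E_{\mathcal Q}[(\frac{\mathrm d\mathcal P}{\mathrm d\mathcal Q})^2]=1+\chi^2=e^{D_2(\mathcal P\|\mathcal Q)}$. Hence $\mathbb E_{\mathbb Q}[w(H_1)w(H_2)]=\exp(|H_1\cap H_2|D_2)$, and the bound becomes
\[
\frac1\varepsilon\sum_{\ell\ge\delta m}\Pr_{H_1,H_2\sim\mathcal H}[|H_1\cap H_2|=\ell]\,e^{\ell D_2}=\frac{o(1)}{\varepsilon}
\]
by \eqref{eq:MMSE-lower-bound-condition}. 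Choosing $\varepsilon=\varepsilon_n\to0$ slowly enough, for example $\varepsilon_n$ equal to the square root of the $o(1)$ term in the hypothesis, makes both contributions vanish.

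The main obstacle is the factor $1/Z^2$. A naive Cauchy--Schwarz or an untruncated second moment would require control of all overlaps, including small ones where uniform sparsity gives nothing. The truncation to the event $\{Z>\varepsilon M\}$ avoids this: it is supplied by \Cref{lem:ZY-upperbound}, and the measure change then removes the remaining $1/Z$, so that only overlaps $\ell\ge\delta m$ enter. One remaining point needs care. If the hypothesis holds for a fixed $\delta$, the $o(1)$ term is uniform, so $\varepsilon_n$ can be chosen as above, and we conclude $\mathrm{MMSE}/m\ge1-\delta-o(1)$.
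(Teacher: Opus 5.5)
Your proof is correct and is essentially the paper's argument. You truncate on $\{Z(Y)>\varepsilon M\}$ via \Cref{lem:ZY-upperbound}, bound $1/Z(Y)$ by $1/(\varepsilon M)$, reduce to $\frac{1}{\varepsilon}\sum_{\ell\ge\delta m}\Pr[|H_1\cap H_2|=\ell]\,e^{\ell D_2(\mathcal P\|\mathcal Q)}$, and pass to the MMSE through the posterior-sample identity; the only difference is bookkeeping, since you change measure to the null model and evaluate $\mathbb{E}_{\mathbb{Q}}[w(H_1)w(H_2)]$ while the paper evaluates $\mathbb{E}[Z_\ell(H^*,Y)]$ directly under the planted model, and the two computations coincide.
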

\begin{proof}
We define the partition function centered at $H^*$ with radius $\ell$ as
\begin{equation}
    Z_\ell(H^*,Y) := \sum_{\hat H \in \+H} \mathbb{I}[|H^*\cap \hat H|=\ell] \prod_{e\in \hat H} \frac{\-d\+P}{\-d\+Q}(Y_e).
\end{equation}

Then we have
\begin{equation}
\begin{aligned}
\mathbb{E} [Z_\ell(H^*,Y)] &= \mathbb{E} \left[ \sum_{\hat H \in \+H} \mathbb{I}[|H^*\cap \hat H|=\ell] \prod_{e\in \hat H} \frac{\-d\+P}{\-d\+Q}(Y_e) \right]\\
&= \frac{1}{M} \sum_{H^*, \hat H \in \+H} \mathbb{I}[|H^*\cap \hat H|=\ell]\mathbb{E}_{Y|H^*} \left[\prod_{e\in \hat H} \frac{\-d\+P}{\-d\+Q}(Y_e) \right]\\
&= \frac{1}{M} \sum_{H^*, \hat H \in \+H} \mathbb{I}[|H^*\cap \hat H|=\ell]
\tp{\prod_{e\in \hat H \cap H^*} \mathbb{E}_{\+P}\left[\frac{\-d\+P}{\-d\+Q}\right]}
\tp{\prod_{e\in \hat H \setminus H^*} \mathbb{E}_{\+Q}\left[\frac{\-d\+P}{\-d\+Q}\right]}\\
&= M \Pr_{H_1,H_2 \sim \+H}[|H_1 \cap H_2| = \ell] \exp(\ell D_2(\+P\|\+Q)).
\end{aligned}
\end{equation}

Applying \Cref{lem:ZY-upperbound}, we have
\begin{equation}
\begin{aligned}
\Pr\left[\frac{|H^* \cap \hat H|}{m} \geq \delta\right]
&\le \Pr[Z(Y) \le \varepsilon M] + \Pr\left[\frac{|H^* \cap \hat H|}{m} \geq \delta , Z(Y) > \varepsilon M\right]\\
&\leq \varepsilon + \frac{1}{\varepsilon M} \mathbb{E}\left[\sum_{\ell \ge \delta m} Z_\ell(H^*, Y)\right]\\
&= \varepsilon + \frac{1}{\varepsilon} \cdot \tp{\sum_{\ell \ge \delta m}\Pr_{H_1,H_2 \sim \+H}[|H_1 \cap H_2| = \ell] \exp(\ell D_2(\+P\|\+Q))}.
\end{aligned}
\end{equation}

Since $\varepsilon$ can be any positive value, when \Cref{eq:MMSE-lower-bound-condition} is satisfied, we have $\Pr\left[\frac{|H^* \cap \hat H|}{m} \geq \delta\right] = o(1)$, and
\begin{equation}
    \mmse/m = \^E\left[\|H^*\|_2^2 - \langle H^*,\^E[\hat H|Y]\rangle\right]/m \ge 1-\delta-o(1).
\end{equation}
\end{proof}

{\noindent\textbf{Proof of the ``Nothing'' recovery lower bound in \Cref{thm:main}}.}

By \Cref{def:uniformly-sparse}, for any constant $\delta \in (0,1)$,
\begin{equation}
\begin{aligned}
    \sum_{\ell \geq \delta m} \Pr_{H_1,H_2 \sim \+H}[|H_1 \cap H_2| = \ell] \exp(\ell D_2(\+P\|\+Q)) &\le \sum_{\ell \geq \delta m} \exp(\ell D_2(\+P\|\+Q) - \ell \log(p_{1M}^{-1}) + o(\log M))\\
    &\le \sum_{\ell \geq \delta m} \exp((-\eta \ell+o(m)) \log(p_{1M}^{-1}))\\
    &\le \frac{\exp((-\eta \delta m + o(m)) \log(p_{1M}^{-1}))}{1 - \exp(-\eta  \log(p_{1M}^{-1}))} = o(1).
\end{aligned}
\end{equation}
Applying \Cref{lem:MMSE-lower-bound}, we have $\^E[\|\^E[X|Y]\|^2] = m - \mmse \le m (\delta + o(1))$. For any estimator $\hat{X}$ satisfying $\|\hat{X}\|^2 = m$, by Cauchy–Schwarz inequality, we have
\begin{equation}
    \mathbb{E}[X\cdot\hat X] = \mathbb{E}[\hat X\cdot \mathbb{E}[X\mid Y]] \le \sqrt{\^E[\|\hat X\|^2]} \cdot \sqrt{\^E[\|\^E[X|Y]\|^2]} = m \sqrt{\delta + o(1)}.
\end{equation}
Thus, the expected overlap of any estimator satisfying $\|\hat X\|^2=m$ is at most $m \sqrt{\delta + o(1)}$. Since this holds asymptotically for any constant $\delta$, Markov's inequality implies that, for every fixed $\lambda,\gamma>0$, choosing $\delta$ sufficiently small gives
\begin{equation}
  \Pr[X\cdot \hat X \ge \lambda m]\le \gamma+o(1).
\end{equation}
Any graph estimator that outputs a member of $\+H$ can be identified with its edge-incidence vector $\hat X\in\{0,1\}^N$; since every graph in $\+H$ has $m$ edges, it satisfies $\|\hat X\|^2=m$, and $X\cdot\hat X=|H^*\cap \hat H|$. Therefore, graph estimators also satisfy the upper bound above.
Hence no algorithm can recover any constant fraction of the edges with any constant probability.

\subsection{AoN under Gaussian distributions}
\label{sec:gaussian-aon}

When $\+P = N(\mu, 1), \+Q = N(0, 1)$, and $\mu > 0$, we observe that $D_{\mathrm{KL}}(\+P \| \+Q) = \frac{1}{2} D_2(\+P \| \+Q)$. In this case, the method presented above does not yield a tight threshold for the `Nothing' phase. Intuitively, this is because there is insufficient concentration when the information approaches the threshold. Nevertheless, we can still prove the existence of AoN for all uniformly sparse $\+H$ using the Gaussian channel's I-MMSE relation.
Our proof has a similar flavor to the \emph{area-theorem} style of establishing AoN in~\cite{niles2021all}, while the I-MMSE relation allows us to establish the ``Nothing'' from ``all'' in a much more straightforward fashion.

\begin{figure}[h!]
\centering
\begin{tikzpicture}[scale=1.2]
\definecolor{lightred}{RGB}{255,200,200}
\definecolor{verylightred}{RGB}{255,230,230}
\definecolor{lightgreen}{RGB}{230,245,230}
\definecolor{lightblue}{RGB}{230,230,255}

\pgfmathsetmacro{\X}{6.7}
\pgfmathsetmacro{\Y}{2.7}
\pgfmathsetmacro{\A}{4.4}
\pgfmathsetmacro{\N}{2.2}

\coordinate (O) at (0,0);
\coordinate (X) at (\X,0);
\coordinate (Y) at (0,\Y);
\coordinate (A) at (\A,0);
\coordinate (N) at (\N,0);

\pgfmathsetmacro{\Factor}{\N/\A}
\coordinate (D) at ($(O)!\Factor!(\A,\Y)$);

\fill[lightred] (O) rectangle ($(N) + (Y)$);
\fill[lightred] (D) -- ($(A) + (Y)$) -- ($(N) + (Y)$) -- cycle;

\fill[lightgreen] (A) rectangle ($(X) + (Y)$);

\fill[lightred] (N) -- (D) -- ($(A) + (Y)$) -- (A) -- cycle;

\draw[dashed] (O) -- ($(A) + (Y)$);
\draw[dashed] (N) -- (D);
\draw (A) -- ($(A) + (Y)$) -- ($(X) + (Y)$);

\pgfmathsetmacro{\U}{1.25}
\node at ($(0.55*\A, \U)$) {Nothing by I-MMSE};
\node at ($(0.5*\A+0.5*\X,\U)$) {All};
\node at ($(0.5*\A,\Y-0.3)$) {Impossible region};
\node at ($(0.5*\A+0.5*\X,\Y-0.3)$) {Possible region};

\draw[->] (O) -- ($(X) + (0.5, 0)$) node[right] {$D_{\mathrm{KL}}$};
\draw[->] (O) -- ($(Y) + (0, 0.5)$) node[above, align=center] {Target edge recovery fraction};

\draw (\N,0.1) -- (\N,-0.1);
\draw (\A,0.1) -- (\A,-0.1);
\draw (0.1,\Y) -- (-0.1,\Y);

\node[below, align=center] at (N) {$\kappa_2$\\($D_2(\mathcal{P}\|\mathcal{Q}) = \log(p_{1M}^{-1})$)};
\node[below] at (A) {\quad\quad$\log(p_{1M}^{-1})$};

\node[left] at (Y) {1};
\node[left] at (O) {0};
\end{tikzpicture}
\caption{Asymptotic visualization of \Cref{thm:aon}. Here $\kappa_2$ denotes the $D_{\mathrm{KL}}$-coordinate at which $D_2(\mathcal P\|\mathcal Q)=\log(p_{1M}^{-1})$ for Gaussian distributions.}
\label{fig:tikz-gaussian}
\end{figure}

In this setting, let $\snr = \mu^2$ denote the signal-to-noise ratio. The signal can be represented as a vector $X \in \mathbb{R}^N$, where $X_e = \^I[e \in H^*] \ \forall e \in [N]$, and the observation is $Y = \sqrt{\snr} X + Z$, with noise $Z$ being i.i.d. $N(0, 1)$. The I-MMSE relation states that

\begin{lemma}[I-MMSE relation~\cite{guo2005mutual}]
  \label{lem:I-mmse}
  For any random vector $X \in \mathbb{R}^N$, noise $Z$ follows i.i.d. $N(0, 1)$ distribution and $Y = \sqrt{\snr} X + Z$, the mutual information $I(X; Y)$ and the minimum mean squared error satisfy the following relation:
  \begin{equation}
    I(X; Y) = \frac{1}{2} \int_0^{\snr} \mmse(t) \ \mathrm{d}t,
  \end{equation}
	  where $\mmse(t)$ represents the MMSE at signal-to-noise ratio $t$.
\end{lemma}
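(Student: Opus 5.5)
The plan is to prove the derivative form $\frac{\mathrm{d}}{\mathrm{d}\,\snr} I(X;Y_{\snr}) = \frac12 \mmse(\snr)$ for every $\snr>0$, where $Y_t := \sqrt{t}X+Z$. The integral identity then follows from three facts: $I(X;Y_0)=0$ (since $Y_0=Z$ is independent of $X$), continuity of $I(X;Y_t)$ on $[0,\snr]$, and the fundamental theorem of calculus. In our application $X\in\{0,1\}^N$ is bounded, so every moment is finite. This lets us justify interchanges of limits and expectations by dominated convergence. For a general $X$ one would assume $\mathbb{E}\|X\|^2<\infty$.

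\textbf{Step 1 (incremental channel).} Fix $\snr$ and a small $\gamma>0$. Realize $Y_{\snr}$ and $Y_{\snr+\gamma}$ jointly so that $X\to Y_{\snr+\gamma}\to Y_{\snr}$ is a Markov chain. This is possible by writing $Y_{\snr}$ as a scaled copy of $Y_{\snr+\gamma}$ plus independent Gaussian noise (Gaussian degradation). By the chain rule,
\begin{equation*}
I(X;Y_{\snr+\gamma})-I(X;Y_{\snr}) = I(X;Y_{\snr+\gamma}\mid Y_{\snr}).
\end{equation*}
Reparametrize the better channel as the pair consisting of $Y_{\snr}$ and an independent fresh observation $\sqrt{\gamma}X+Z'$. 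This pair is a sufficient statistic, since the optimal combination of the two Gaussian observations of $X$ recovers $Y_{\snr+\gamma}$ up to an invertible transformation. The increment therefore equals $I(X;\sqrt{\gamma}X+Z'\mid Y_{\snr})$.

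\textbf{Step 2 (low-SNR expansion).} Next I will show that for any $X$ with bounded support,
\begin{equation*}
I(X;\sqrt{\gamma}X+Z') = \frac{\gamma}{2}\,\mathbb{E}\|X-\mathbb{E}X\|^2 + o(\gamma).
\end{equation*}
The route is to expand the output density $p(y)=\mathbb{E}_X[\phi(y-\sqrt\gamma X)]$ to second order in $\sqrt\gamma$, where $\phi$ is the standard Gaussian density. Then write $I = \mathbb{E}\log\frac{p(Y\mid X)}{p(Y)}$ and collect terms. The first-order terms vanish after centering, and the second-order term gives half the covariance trace.

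Apply this conditionally on $Y_{\snr}=y$, where $X$ has the posterior law. This gives $\frac{\gamma}{2}\mathbb{E}\bigl[\|X-\mathbb{E}[X\mid Y_{\snr}]\|^2\bigr]+o(\gamma)=\frac{\gamma}{2}\mmse(\snr)+o(\gamma)$. Dividing by $\gamma$ and letting $\gamma\to0$ yields the right derivative. The left derivative is handled symmetrically, using continuity of $\mmse(t)$ in $t$. That continuity is easy for bounded $X$: the posterior mean is a smooth function of $(t,y)$ with integrable derivatives.

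\textbf{Main obstacle.} The delicate point is that the $o(\gamma)$ remainder in Step 2 must be uniform over the conditioning value $y$, so that we may integrate it against the law of $Y_{\snr}$. For bounded $\|X\|\le \sqrt{m}$, I would bound the third-order Taylor remainder of $\log \mathbb{E}_{X\mid y}[\exp(\sqrt\gamma\langle X, w\rangle - \gamma\|X\|^2/2)]$ by $C\gamma^{3/2}(1+\|w\|^3)$, with $C$ depending only on $m$. Since $w\sim N(0,I)$ has all moments, dominated convergence then applies.

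As a fallback, the same derivative could be obtained by differentiating $I=h(Y_{\snr})-h(Z)$ directly under the integral sign. This uses the Gaussian heat equation and Tweedie's formula $\nabla\log p(y)=\sqrt{\snr}\,\mathbb{E}[X\mid Y=y]-y$, which reduces $\frac{\mathrm d}{\mathrm d\,\snr}h(Y_{\snr})$ to $\frac12\mmse(\snr)$ after an integration by parts.
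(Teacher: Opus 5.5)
The paper does not prove this lemma; it quotes it from \cite{guo2005mutual}. Your argument is essentially the incremental-channel proof from that reference: Gaussian degradation, rewriting $Y_{\snr+\gamma}$ as $Y_{\snr}$ plus a fresh observation $\sqrt{\gamma}X+Z'$, the low-SNR expansion applied under the posterior, and a uniform $O(\gamma^{3/2})$ remainder so you can integrate over $Y_{\snr}$. Your fallback is the other standard proof, via the heat equation and Tweedie's formula.

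As written, the argument is sound for bounded inputs such as $X\in\{0,1\}^N$, which is the only case the paper uses. It does not cover the lemma's ``any random vector'' phrasing. That generality requires at least $\mathbb{E}\|X\|^2<\infty$, and your domination step, which relies on the posterior being uniformly bounded, would have to be replaced by a different argument.
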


\begin{lemma}\label{lem:mmse-non-increasing}
  MMSE is non-increasing as a function of the snr.
\end{lemma}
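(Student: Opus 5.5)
The plan is to compare the MMSE at two signal-to-noise ratios $s<t$ by a coupling argument that degrades the observation. Fix $0\le s<t$. Given $Y_t=\sqrt{t}X+Z$ with $Z$ i.i.d.\ $N(0,1)$, construct
\begin{equation*}
  Y' := \sqrt{s/t}\,Y_t + \sqrt{1-s/t}\,Z',
\end{equation*}
where $Z'$ is i.i.d.\ $N(0,1)$ and independent of $(X,Z)$. Then
\begin{equation*}
  Y' = \sqrt{s}\,X + \bigl(\sqrt{s/t}\,Z + \sqrt{1-s/t}\,Z'\bigr),
\end{equation*}
and the noise term is again i.i.d.\ $N(0,1)$, independent of $X$, because its variance is $s/t+(1-s/t)=1$. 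Consequently the joint law of $(X,Y')$ is exactly the joint law of $(X,Y_s)$ at signal-to-noise ratio $s$, so $\mmse(s)=\mathbb{E}\|X-\mathbb{E}[X\mid Y']\|^2$.

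Next I will argue that observing $Y_t$ is at least as informative as observing $Y'$. The pair $(Y_t,Z')$ determines $Y'$, and $Z'$ is independent of $(X,Y_t)$. Hence, among all estimators measurable with respect to $(Y_t,Z')$, the posterior mean $\mathbb{E}[X\mid Y_t,Z']=\mathbb{E}[X\mid Y_t]$ minimizes the mean squared error. The function $\mathbb{E}[X\mid Y']$ is one such estimator, so
\begin{equation*}
  \mmse(t)=\mathbb{E}\|X-\mathbb{E}[X\mid Y_t]\|^2 \le \mathbb{E}\|X-\mathbb{E}[X\mid Y']\|^2 = \mmse(s).
\end{equation*}
This gives monotonicity for any prior on $X$ with $\mathbb{E}\|X\|^2<\infty$. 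That condition holds here since $\|X\|^2=m$.

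The only step needing care is the identification of the law of $(X,Y')$ with the snr-$s$ model; it is just a Gaussian variance computation plus independence of the noise from $X$. The endpoint $s=0$ is covered by the same construction (or directly, since $\mmse(0)=\mathbb{E}\|X-\mathbb{E}X\|^2$ is the largest possible value). I do not expect any real obstacle. An alternative route would be to cite the known derivative formula $\frac{d}{d\,\snr}\mmse = -\mathbb{E}\|\mathrm{Cov}(X\mid Y)\|_F^2\le 0$, but I will use the coupling argument because it is self-contained.
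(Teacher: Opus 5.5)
Your proof is correct and is essentially the paper's argument. The paper's coupling $Y_1=\sqrt{\snr_1}(X+Z_1+Z_2)$ equals $\sqrt{\snr_1/\snr_2}\,Y_2+\sqrt{\snr_1}Z_1$, which is exactly your $Y'=\sqrt{s/t}\,Y_t+\sqrt{1-s/t}\,Z'$ up to rescaling the noise, and its step $\mathbb{E}[\operatorname{Var}(X\mid Y_2)]=\mathbb{E}[\operatorname{Var}(X\mid Y_1,Z_1)]\le\mathbb{E}[\operatorname{Var}(X\mid Y_1)]$ is the same comparison you make through optimality of the posterior mean over the richer $\sigma$-field of $(Y_t,Z')$ (with the bonus that your parametrization also covers $s=0$, which the paper's $1/\snr_1$ excludes).
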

\begin{proof}
  Consider two SNR levels such that $0 < \snr_1 < \snr_2$. We construct the following coupling.
  Let
  \begin{equation}
  Z_2 \sim N(0, \tfrac{1}{\snr_2})^{\otimes N}, \quad
  Z_1 \sim N\left(0, \tfrac{1}{\snr_1} - \tfrac{1}{\snr_2} \right)^{\otimes N},
  \end{equation}
  and define the observations as
  \begin{equation}
  Y_2 = \sqrt{\snr_2} \cdot (X + Z_2), \quad
  Y_1 = \sqrt{\snr_1} \cdot (X + Z_1 + Z_2).
  \end{equation}
  Then $Y_1$ is a noisier version of $Y_2$, obtained via a post-processing step on $Y_2$. By the law of total variance, we have:
  \begin{equation}
    \mmse(\snr_2) 
    = \mathbb{E}[\operatorname{Var}(X \mid Y_2)]
    \overset{(*)}{=} \mathbb{E}[\operatorname{Var}(X \mid Y_1, Z_1)]
    \le \mathbb{E}[\operatorname{Var}(X \mid Y_1)]
    = \mmse(\snr_1),
  \end{equation}
  where $(*)$ holds because $Z_1$ is independent of $Y_2$ and conditioned on $Z_1$, $Y_1$ and $Y_2$ are linearly related. Thus, $\mmse(\snr)$ is non-increasing in $\snr$.
\end{proof}

\begin{lemma}\label{lem:recovery-to-mmse}
 If there exists a graph estimator $\hat H$ (that selects $m$ edges out of the total $N$ edges) that can recover a $\lambda$ fraction of the hidden edges with probability exceeding $\delta$, then it follows that
  \begin{equation}
    \mathrm{MMSE} \le (1 - \delta^2\lambda^2) m.
  \end{equation}
\end{lemma}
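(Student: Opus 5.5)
The idea is to use the estimator to build a test vector whose correlation with $X$ bounds the MMSE from above. Identify $\hat H$ with its incidence vector $\hat X\in\{0,1\}^N$, so $\|\hat X\|^2=m$ and $X\cdot\hat X=|H^*\cap\hat H|$. As in the proof of the ``Nothing'' lower bound in \Cref{sec:12renyi-aon}, write
\begin{equation}
\mmse = m - \mathbb{E}\left[\|\mathbb{E}[X\mid Y]\|^2\right],
\end{equation}
so it suffices to show $\mathbb{E}[\|\mathbb{E}[X\mid Y]\|^2]\ge \delta^2\lambda^2 m$.

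\textbf{Step 1 (expected overlap).} By hypothesis $\Pr[X\cdot\hat X\ge\lambda m]>\delta$, and the overlap is nonnegative. Hence $\mathbb{E}[X\cdot\hat X]\ge \lambda m\,\delta$.

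\textbf{Step 2 (Cauchy--Schwarz through the posterior mean).} Since $\hat X$ is a function of $Y$, and possibly of independent randomness, the tower property gives $\mathbb{E}[X\cdot\hat X]=\mathbb{E}[\hat X\cdot\mathbb{E}[X\mid Y]]$. Applying Cauchy--Schwarz, exactly as in \Cref{sec:12renyi-aon},
\begin{equation}
\delta\lambda m \le \sqrt{\mathbb{E}\|\hat X\|^2}\,\sqrt{\mathbb{E}\|\mathbb{E}[X\mid Y]\|^2} = \sqrt{m}\,\sqrt{\mathbb{E}\|\mathbb{E}[X\mid Y]\|^2}.
\end{equation}

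\textbf{Step 3 (conclude).} Squaring Step 2 gives $\mathbb{E}\|\mathbb{E}[X\mid Y]\|^2\ge \delta^2\lambda^2 m$. Substituting into the identity above yields $\mmse\le(1-\delta^2\lambda^2)m$.

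No step here is a real obstacle. The only points needing care are that $\mathbb{E}\|X\|^2=m$, which holds because every graph in $\+H$ has exactly $m$ edges, and that a randomized estimator is handled by conditioning on its internal randomness, which is independent of $X$ given $Y$.
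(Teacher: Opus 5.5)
Your proof is correct and gives the same bound, by a slightly different route. The paper never uses the posterior mean. It builds the explicit estimator $\gamma\,\mathbb{I}[e\in\hat H]$ and expands its squared error as $(1+\gamma^2)m-2\gamma|H^*\cap\hat H|$. Splitting on whether the overlap reaches $\lambda m$ gives an MSE of at most $(1+\gamma^2-2\delta\lambda\gamma)m$, and choosing $\gamma=\delta\lambda$ finishes. The only fact it needs is that the MMSE is at most the MSE of any estimator.

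You instead combine the identity $\mmse=m-\mathbb{E}\|\mathbb{E}[X\mid Y]\|^2$ with the tower property and Cauchy--Schwarz. This is the same inequality that drives the ``Nothing'' argument in \Cref{sec:12renyi-aon}, used in the opposite direction.

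The two arguments are one computation seen from two sides. Minimizing the exact MSE of $\gamma\,\mathbb{I}[e\in\hat H]$ over $\gamma$ gives $\gamma^\ast=\mathbb{E}|H^*\cap\hat H|/m$, and the resulting MSE is $m-(\mathbb{E}|H^*\cap\hat H|)^2/m$. That is exactly your Cauchy--Schwarz bound before you insert $\mathbb{E}|H^*\cap\hat H|\ge\delta\lambda m$.

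The paper's version is self-contained and exhibits a concrete estimator that attains the bound. Yours is shorter, and it shows that this lemma and the overlap bound behind part (4) of \Cref{thm:main} are two directions of the same Cauchy--Schwarz inequality. The two points you flag, $\mathbb{E}\|X\|^2=m$ and randomized estimators, are the ones that need checking, and you handle both correctly.
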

\begin{proof}
  Given a graph estimator $\hat H$, for any $\gamma > 0$, we can construct the following vector estimator $\hat X \in \mathbb{R}^N$: let $\hat X_e := \gamma \cdot \mathbb{I}[e \in \hat H] \ \ \forall e \in [N]$. The mean square error of $\hat X$ has the following upper bound:
  \begin{equation}
    \begin{aligned}
      \mathrm{MSE}(\hat X) &= \mathbb{E}\left[\|X - \hat X\|^2\right]\\
      &\le \Pr[|H^* \cap \hat H| \ge \lambda m] \cdot \left( \lambda m \cdot (1-\gamma)^2 + (1-\lambda) m \cdot (1 + \gamma^2) \right)\\
      &\quad + \Pr[|H^* \cap \hat H| < \lambda m] \cdot (1 + \gamma^2) m\\
      &\le \delta \cdot \left( (1 + \gamma^2)m - 2 \gamma \lambda m \right) + (1-\delta) \cdot (1 + \gamma^2) m\\
      &= \left(1 + \gamma^2 - 2 \delta\lambda \gamma\right) m.
    \end{aligned}
  \end{equation}
  Setting $\gamma = \delta \lambda$, we have:
  \begin{equation}
    \mmse \le \mathrm{MSE}(\hat X) \le (1 - \delta^2\lambda^2) m.
  \end{equation}
\end{proof}

\begin{lemma}\label{lem:gaussian-aon}
  For uniformly sparse $\+H$ under Gaussian distributions, if almost exact recovery is achievable when $D_{\mathrm{KL}}(\mathcal{P} \| \mathcal{Q}) = (1 + \eta) \log(p_{1M}^{-1})$ for any constant $\eta > 0$,
  then it follows that no algorithm (regardless of computational efficiency) can recover any constant fraction of the edges with any constant probability when $D_{\mathrm{KL}}(\mathcal{P} \| \mathcal{Q}) = (1 - \eta) \log(p_{1M}^{-1})$ for any constant $\eta > 0$. 
\end{lemma}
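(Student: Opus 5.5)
The plan is to squeeze the mutual information from both sides using the I-MMSE relation (\Cref{lem:I-mmse}), with the signal-to-noise ratio as the integration variable. In the Gaussian model $D_{\mathrm{KL}}(\+P\|\+Q)=\mu^2/2=\snr/2$. Set $s^\star := 2\log(p_{1M}^{-1})$, so that $D_{\mathrm{KL}}=c\log(p_{1M}^{-1})$ exactly when $\snr=c\,s^\star$. By the definition $p_{1M}=M^{-1/m}$ we have $\log M = m\log(p_{1M}^{-1}) = m s^\star/2$.

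We argue by contradiction. Fix constants $\eta,\lambda,\delta\in(0,1)$, let $s_0:=(1-\eta)s^\star$, and suppose some estimator at $\snr=s_0$ recovers a $\lambda$-fraction of the hidden edges with probability at least $\delta$. By \Cref{lem:recovery-to-mmse}, $\mmse(s_0)\le(1-\delta^2\lambda^2)m$. By \Cref{lem:mmse-non-increasing}, the same bound holds for $\mmse(t)$ at every $t\ge s_0$. For $t<s_0$ we use the trivial bound $\mmse(t)\le \mathbb E\|X\|^2=m$, which comes from the estimator $\hat X=0$.

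Now pick a small constant $\eta'>0$ and set $s_1:=(1+\eta')s^\star$. The I-MMSE relation gives
\begin{equation*}
I(X;Y)\big|_{\snr=s_1}=\frac12\int_0^{s_1}\mmse(t)\,\mathrm dt\le \frac m2\Bigl(s_1-\delta^2\lambda^2(s_1-s_0)\Bigr)=\frac{m s^\star}{2}\Bigl(1+\eta'-\delta^2\lambda^2(\eta+\eta')\Bigr).
\end{equation*}
On the other hand, the hypothesis says almost exact recovery is achievable at $D_{\mathrm{KL}}=(1+\eta')\log(p_{1M}^{-1})$, i.e.\ at $\snr=s_1$. Then \Cref{lem:reverse-fano} forces $I(X;Y)|_{\snr=s_1}\ge(1-o(1))\log M=(1-o(1))\,m s^\star/2$.

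Choose $\eta'$ so that $\eta'(1-\delta^2\lambda^2)<\delta^2\lambda^2\eta$, for instance $\eta'=\delta^2\lambda^2\eta/2$. Then the two bounds give
\begin{equation*}
1-o(1)\le 1+\eta'(1-\delta^2\lambda^2)-\delta^2\lambda^2\eta,
\end{equation*}
and the right-hand side is a constant strictly below $1$. This is a contradiction for large $n$. Since $\lambda$ and $\delta$ were arbitrary, no estimator recovers any constant fraction with any constant probability at $(1-\eta)\log(p_{1M}^{-1})$.

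The main subtlety is the choice of constants. Taking $s_1$ at the same offset $\eta$ above the threshold as $s_0$ is below it does not work: the gain $\delta^2\lambda^2\cdot 2\eta$ beats the excess $\eta$ only when $\delta^2\lambda^2>1/2$. The argument therefore has to use the hypothesis at an $\eta'$ much smaller than $\eta$. This is why the statement assumes almost exact recovery for every $\eta>0$, which part 1 of \Cref{thm:main} supplies for uniformly sparse $\+H$ under Gaussian weights. One should also check that the $o(1)$ in \Cref{lem:reverse-fano} is uniform once $\eta'$ is fixed; it is, because $\eta'$ depends only on $\eta,\lambda,\delta$ and not on $n$.
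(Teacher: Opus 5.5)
Your proposal is correct and matches the paper's proof. Both argue by contradiction: \Cref{lem:recovery-to-mmse} bounds the MMSE at the subcritical snr, and \Cref{lem:mmse-non-increasing} with the I-MMSE relation gives an upper bound on $I(X;Y)$ at a slightly supercritical snr that contradicts \Cref{lem:reverse-fano}. Your choice $\eta'=\delta^2\lambda^2\eta/2$ differs only cosmetically from the paper's $\eta_2=\frac{\eta_1\delta^2\lambda^2}{2(1-\delta^2\lambda^2)}$; both rely on taking the supercritical offset much smaller than $\eta$, as you noted.
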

\begin{proof}
  For Gaussian distributions, $D_{\mathrm{KL}}(\+P \| \+Q) = \frac{\mu^2}{2} = \frac{\snr}{2}$, we abbreviate $\snr_c := 2 \log(p_{1M}^{-1})$. \;
  
  We proceed by contradiction: suppose there exist constants $\eta_1, \lambda, \delta \in (0,1)$ such that an estimator $\hat H$ can recover a $\lambda$ fraction of the edges with probability exceeding $\delta$ when $D_{\mathrm{KL}}(\mathcal{P} \| \mathcal{Q}) = (1 - \eta_1) \log(p_{1M}^{-1})$ (i.e., when $\mathrm{snr} = (1 - \eta_1) \cdot \mathrm{snr}_c$), then by \Cref{lem:recovery-to-mmse}, we have:
  \begin{equation}
    \mathrm{MMSE}((1 - \eta_1) \cdot \mathrm{snr}_c) \le (1- \delta^2\lambda^2) m.
  \end{equation}
  Set $\eta_2 = \frac{\eta_1 \delta^2\lambda^2}{2 (1-\delta^2\lambda^2)}$, which is a positive constant. Using the I-MMSE relation in \Cref{lem:I-mmse}, along with the monotonicity of the MMSE in \Cref{lem:mmse-non-increasing}, we have the following upper bound for mutual information at $\snr = (1+\eta_2) \snr_c$:
  \begin{equation}
    \begin{aligned}
	      I(X; Y) &= \frac{1}{2} \int_0^{(1+\eta_2) \snr_c} \mmse(t) \ \mathrm{d}t\\
      &\le \frac 12 \left[(1-\eta_1) \snr_c \cdot m + (\eta_1 + \eta_2) \snr_c \cdot (1-\delta^2\lambda^2) m\right]\\
      &= (1 - \frac 12 \eta_1 \delta^2 \lambda^2) \cdot \frac{\snr_c \cdot m}{2} = (1 - \frac 12 \eta_1 \delta^2 \lambda^2) \log M.
    \end{aligned}
  \end{equation}
  However, by the assumption that almost exact recovery is achievable at $D_{\mathrm{KL}}(\mathcal{P} \| \mathcal{Q}) = (1 + \eta_2) \log(p_{1M}^{-1})$ (i.e., at $\mathrm{snr} = (1 + \eta_2) \cdot \mathrm{snr}_c$), \Cref{lem:reverse-fano} gives $I(X; Y) \ge (1 - o(1)) \log M$ at $\mathrm{snr} = (1 + \eta_2) \cdot \mathrm{snr}_c$. This leads to a contradiction.
\end{proof}

Finally, we are ready to complete the proof of AoN under Gaussian distributions.

{\noindent\textbf{Proof of \Cref{thm:aon}}.}
Since the Gaussian distribution satisfies \Cref{asm:renyi-stable-upper}, the corresponding upper bound for recovery has been established in \Cref{thm:main}: almost exact recovery is achievable at $D_{\mathrm{KL}}(\mathcal{P} \| \mathcal{Q}) \ge (1 + \eta) \log(p_{1M}^{-1})$ for any constant $\eta > 0$. Then by \Cref{lem:gaussian-aon}, it follows that no algorithm can recover any constant fraction of the edges with any constant probability when $D_{\mathrm{KL}}(\mathcal{P} \| \mathcal{Q}) = (1 - \eta) \log(p_{1M}^{-1})$ for any constant $\eta > 0$. The same impossibility holds below this value by monotonicity of the Gaussian channel in the signal-to-noise ratio.

\section*{Acknowledgements}
We thank Tselil Schramm for their insightful discussions and helpful comments on the second moment approach.
JL is supported by the National Natural Science Foundation of China under Grant No. 62472212.
\clearpage

\bibliographystyle{alpha}
\bibliography{refs}

\appendix
\crefalias{section}{appendix}
\crefalias{subsection}{appendix}
\crefalias{subsubsection}{appendix}

\section{Missing Proofs}
\label{sec:missing-proofs}

\subsection{Missing proofs of almost exact recovery upper bound}\label{sec:missing-proofs3}

\noindent{\textbf{Proof of \Cref{lem:almost-upper}}.}

Let $L=\frac{\mathrm{d}\+P}{\mathrm{d}\+Q}$. Since $\+P\ll \+Q$, we have $\+P(L=0)=0$, so $X=\log L$ is well-defined under $\+P$. Under $\+Q$, however, $Y=\log L$ may take the value $-\infty$. All log MGFs are understood in the extended-real sense. Define
\begin{equation}\label{eq:log-mgf}
  \begin{aligned}
    \psi_P(\lambda)
    &= \log \mathbb{E}_{\+P} [e^{\lambda X}]
    = \log \int L^\lambda\,\mathrm{d}\+P,\\
    \psi_Q(\lambda)
    &= \log \mathbb{E}_{\+Q} [e^{\lambda Y}]
    = \log \int L^\lambda\,\mathrm{d}\+Q.
  \end{aligned}
\end{equation}

Define the Legendre transforms by
\begin{equation}\label{eq:log-mgf-legendre}
  \begin{aligned}
    E_P(\tau) = \sup_{\lambda\in\mathbb R}\{\lambda\tau-\psi_P(\lambda)\},\qquad
    E_Q(\tau) = \sup_{\lambda\in\mathbb R}\{\lambda\tau-\psi_Q(\lambda)\}.
  \end{aligned}
\end{equation}

Since $\psi_P(0)=\psi_Q(0)=0$, both $E_P(\tau)$ and $E_Q(\tau)$ are nonnegative. Let $D=D_{\mathrm{KL}}(\+P\|\+Q)=\mathbb E_{\+P}[X]$. By Jensen's inequality,
\begin{equation}
  \psi_P(\lambda)
  =
  \log \mathbb E_{\+P}[e^{\lambda X}]
  \ge
  \lambda \mathbb E_{\+P}[X]
  =
  \lambda D.
\end{equation}
Thus $E_P(D)=0$. Moreover, if $0\le\tau\le D$, then for every $\lambda>0$,
\begin{equation}
  \lambda\tau-\psi_P(\lambda)
  \le
  \lambda(\tau-D)
  \le 0.
\end{equation}
Since $\lambda=0$ gives value $0$, positive $\lambda$ cannot increase the supremum. Hence
\begin{equation}
  E_P(\tau)
  =
  \sup_{\lambda\le 0}\{\lambda\tau-\psi_P(\lambda)\},
  \qquad 0\le\tau\le D.
\end{equation}

Similarly, for $E_Q$, take $\lambda<0$. If $\+Q(L=0)>0$, then $\mathbb E_{\+Q}L^\lambda=+\infty$. Otherwise, whenever the expectation is finite, the function $x\mapsto x^\lambda$ is convex on $(0,\infty)$, and Jensen's inequality gives
\begin{equation}
  \mathbb E_{\+Q}L^\lambda
  \ge
  (\mathbb E_{\+Q}L)^\lambda
  =
  1.
\end{equation}
Therefore $\psi_Q(\lambda)\ge0$ for all $\lambda<0$, in the extended-real sense. Hence for every $\tau\ge0$,
\begin{equation}
  \lambda\tau-\psi_Q(\lambda)\le0
  \qquad \text{for all } \lambda<0.
\end{equation}
Again $\lambda=0$ gives value $0$, so negative $\lambda$ cannot increase the supremum. Therefore
\begin{equation}
  E_Q(\tau)
  =
  \sup_{\lambda\ge 0}\{\lambda\tau-\psi_Q(\lambda)\},
  \qquad \tau\ge0.
\end{equation}
Applying Chernoff bound: for $\ell$ i.i.d. random variables $X_1, \dots, X_\ell$ drawn from the distribution $\+X$, and $\ell$ i.i.d. random variables $Y_1, \dots, Y_\ell$ drawn from the distribution $\+Y$, we have:
\begin{equation}\label{eq:xy-chernoff}
  \Pr \left[ \sum_{i=1}^{\ell} X_i \le \ell \tau \right] \le e^{-\ell E_P(\tau)}, \quad \Pr \left[ \sum_{i=1}^{\ell} Y_i \ge \ell \tau \right] \le e^{-\ell E_Q(\tau)}.
\end{equation}

Recall that $\+H(H^*,\ell) := \{H:|H\setminus H^*| = \ell, H \in \+H\}$, define $\+H^\mathrm{in}(H^*,\ell) := \{H^*\setminus H: H \in \+H(H^*,\ell)\}$ and $\+H^\mathrm{out}(H^*,\ell) := \{H\setminus H^*: H \in \+H(H^*,\ell)\}$. For any $\tau \in [0,D_{\mathrm{KL}}(\+P \| \+Q)]$, by union bound we obtain:
\begin{equation}\label{eq:MLE-chernoff-expand}
  \begin{aligned}
    &\Pr[\exists H \in \+H(H^*,\ell): L(H) > L(H^*)]\\
    &\le \^E_{H^* \sim \+H}[|\+H^\mathrm{in}(H^*,\ell)|] \Pr\left[ \sum_{i=1}^{\ell} X_i \le \ell \tau \right] + \^E_{H^* \sim \+H}[|\+H^\mathrm{out}(H^*,\ell)|] \Pr\left[ \sum_{i=1}^{\ell} Y_i \ge \ell \tau \right]\\
    &\le \^E_{H^* \sim \+H}[|\+H^\mathrm{in}(H^*,\ell)|] e^{-\ell E_P(\tau)} + \^E_{H^* \sim \+H}[|\+H^\mathrm{out}(H^*,\ell)|] e^{-\ell E_Q(\tau)}.\\
  \end{aligned}
\end{equation}

When $\+H$ is uniformly sparse, by \Cref{def:uniformly-sparse}, we have
\begin{equation}\label{eq:H-out-bound}
\begin{aligned}
\^E_{H^* \sim \+H}[|\+H^\mathrm{out}(H^*,\ell)|] \le \^E_{H^* \sim \+H}[|\+H(H^*,\ell)|]
&\le M \Pr_{H_1,H_2 \sim \+H}[|H_1 \cap H_2| = m-\ell]\\
&\le \exp(\ell \log(p_{1M}^{-1}) + o(\log M)).
\end{aligned}
\end{equation}

Let $\delta_n \to 0$ to be decided later and for all $\ell \ge \delta_n m$, we have
\begin{equation}\label{eq:H-in-bound}
  |\+H^\mathrm{in}(H^*,\ell)| \le \binom{m}{\ell} \le \left(\frac{e m}{\ell}\right)^\ell \le \left( \frac{e}{\delta_n} \right)^\ell.
\end{equation}

On the interval $[0,D_{\mathrm{KL}}(\+P \| \+Q)]$, the function $E_Q(\tau)$ is nondecreasing in $\tau$, while $E_P(\tau)$ is nonincreasing. We will use the following choice of $\tau$ and lower bound the two exponents separately.

When $D_{\mathrm{KL}}(\+P \| \+Q) \ge (1+\eta) \log(p_{1M}^{-1})$, by \Cref{asm:renyi-stable-upper} there exists a sequence $\epsilon_n \to 0$, such that for $\lambda = -\frac{1}{\epsilon_n \log(p_{1M}^{-1})}$,
\begin{equation}
  D_{1+\lambda}(\+P \| \+Q) \ge \tp{1-\frac{\eta/2}{1+\eta}} D_{\mathrm{KL}}(\+P \| \+Q) \ge (1+\eta/2) \log(p_{1M}^{-1}).
\end{equation}

Setting $\tau = (1 + \eta/4)\log(p_{1M}^{-1})$, then
\begin{equation}\label{eq:Ep-lower-bound}
\begin{aligned}
  E_P(\tau) &\ge \lambda \tau - \psi_P(\lambda)\\
  &= \lambda((1+\eta/4)\log(p_{1M}^{-1}) - D_{1+\lambda}(\+P \| \+Q))\\
  &\ge \lambda (-\eta/4) \log(p_{1M}^{-1}) = \frac{\eta}{4 \epsilon_n},\\
  E_Q(\tau) &\ge  1\cdot \tau - \psi_Q(1) = \tau = (1 + \eta/4) \log(p_{1M}^{-1}).
\end{aligned}
\end{equation}
By combining~\cref{eq:H-in-bound}, ~\cref{eq:H-out-bound} and~\cref{eq:Ep-lower-bound}, and setting $\delta_n = \exp(-\sqrt{1/\epsilon_n})$, then we have,
\begin{equation}\label{eq:MLE-chernoff-final}
  \begin{aligned}
    \^E_{H^* \sim \+H}[|\+H^\mathrm{in}(H^*,\ell)|] e^{-\ell E_P(\tau)} &\le \exp\left( \ell \log\left(\frac{e}{\delta_n}\right) - \frac{\ell \eta}{4 \epsilon_n}\right) \le \exp\left( -(1+o(1))\frac{\ell \eta}{4 \epsilon_n}\right),\\
    \^E_{H^* \sim \+H}[|\+H(H^*,\ell)|] e^{-\ell E_Q(\tau)} &\le \exp\left( \ell \log(p_{1M}^{-1}) + o(\log M) - \ell (1+\eta/4) \log(p_{1M}^{-1}) \right)\\
    &\le \exp\left( -\frac{\ell \eta}{4} \cdot \log(p_{1M}^{-1}) + o(\log M)\right).\\
  \end{aligned}
\end{equation}

Substituting~\cref{eq:MLE-chernoff-final} into~\cref{eq:MLE-chernoff-expand} finishes the proof.
\qed

\subsection{Missing proofs of almost exact recovery lower bound}\label{sec:missing-proofs4}

\begin{lemma}[Distance-based Fano's inequality in \cite{duchi2013distance}]\label{lem:distance-based-fano}
  Let $X$ be uniformly distributed over a finite set $\mathcal{X}$, and let $\rho: \mathcal{X} \times \mathcal{X} \to \mathbb{R}_{\ge 0}$ be a symmetric distance function. For all estimators $\hat{X}$ that form a Markov chain $X \rightarrow Y \rightarrow \hat{X}$, define the error probability $P_t := \Pr[ \rho(\hat{X}, X) > t ]$.
  If $|\mathcal{X}| - N_t^{\min} > N_t^{\max}$, then
  \begin{equation}
    P_t \ge 1 - \frac{I(X; Y) + \log 2}{\log \left( \frac{|\mathcal{X}|}{N_t^{\max}} \right)},
  \end{equation}
	  where $N_t^{\min} := \min_{v \in \mathcal{X}} \left|\{ v' \in \mathcal{X} : \rho(v, v') \le t \} \right|$
	  , $N_t^{\max} := \max_{v \in \mathcal{X}} \left|\{ v' \in \mathcal{X} : \rho(v, v') \le t \} \right|$.
\end{lemma}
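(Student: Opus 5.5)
The plan is to reduce the distance-based Fano inequality to the standard Fano inequality by a counting argument over balls; this is the argument of \cite{duchi2013distance}. Fix $t$ and define the indicator $E := \mathbb{I}[\rho(\hat X, X) > t]$, so that $\Pr[E=1] = P_t$. Since $X \to Y \to \hat X$ is a Markov chain, the data processing inequality gives $I(X;\hat X) \le I(X;Y)$. It therefore suffices to prove
\begin{equation}
  H(X \mid \hat X) \le \log 2 + P_t \log|\mathcal X| + (1-P_t)\log N_t^{\max},
\end{equation}
because $H(X) = \log|\mathcal X|$ (as $X$ is uniform) and $H(X\mid \hat X) = H(X) - I(X;\hat X)$.

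The first step is the chain-rule expansion
\begin{equation}
  H(X\mid\hat X) \le H(X, E\mid \hat X) = H(E\mid\hat X) + H(X\mid E,\hat X) \le \log 2 + H(X\mid E,\hat X).
\end{equation}
I then split on the value of $E$:
\begin{equation}
  H(X\mid E,\hat X) = P_t\, H(X\mid E=1,\hat X) + (1-P_t)\, H(X\mid E=0,\hat X).
\end{equation}

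On the event $E=0$, the unknown $X$ lies in the ball $\{v : \rho(\hat X, v)\le t\}$. This ball has at most $N_t^{\max}$ elements; here symmetry of $\rho$ is used so that balls centered at $\hat X$ match the definition of $N_t^{\max}$. Hence $H(X\mid E=0,\hat X) \le \log N_t^{\max}$.

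On the event $E=1$, $X$ lies outside that ball, so at most $|\mathcal X| - N_t^{\min}$ values remain. This gives $H(X\mid E=1,\hat X) \le \log(|\mathcal X| - N_t^{\min}) \le \log|\mathcal X|$. The cruder bound $\log|\mathcal X|$ is enough for the stated inequality.

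Combining these bounds gives
\begin{equation}
  \log|\mathcal X| - I(X;Y) \le \log 2 + P_t\log|\mathcal X| + (1-P_t)\log N_t^{\max}.
\end{equation}
Rearranging yields $(1-P_t)\log\bigl(|\mathcal X|/N_t^{\max}\bigr) \le I(X;Y) + \log 2$. The hypothesis $|\mathcal X| - N_t^{\min} > N_t^{\max}$ ensures $|\mathcal X| > N_t^{\max}$, so the logarithm is positive and we may divide by it, which gives the claimed bound on $P_t$.

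The main point needing care is conditioning on events that may have probability zero. When $P_t\in\{0,1\}$, the corresponding term simply drops out and the inequality still holds. The other point is that the ball bounds hold pointwise for each realization of $\hat X$, so the conditional entropies can be bounded realization by realization and then averaged.
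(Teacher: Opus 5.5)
Your proof is correct. The paper gives no proof of its own and simply imports this lemma from \cite{duchi2013distance}, whose argument is the same as yours: condition on the error indicator and bound the conditional entropy by ball sizes. The only difference is that the original keeps the sharper term $P_t\log(|\mathcal X|-N_t^{\min})$ on the error event, which is where the hypothesis $|\mathcal X|-N_t^{\min}>N_t^{\max}$ is genuinely used. Your cruder bound $\log|\mathcal X|$ already gives the stated form and needs only $|\mathcal X|>N_t^{\max}$.
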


{\noindent \textbf{Proof of \Cref{lem:reverse-fano}}.}
  If there exists an algorithm that achieves almost exact recovery, then there exist sequences $\epsilon_n, \delta_n \to 0$ such that $\Pr\left[|H^* \setminus \hat{H}| > \delta_n m\right] \le \epsilon_n$.
  We apply the distance-based Fano's inequality \cite{duchi2013distance} (restated as~\Cref{lem:distance-based-fano}) with distance function $\rho(X_1, X_2) = |X_1 \setminus X_2|$ and $t = \delta_n m$.
  It remains to verify its condition $|\mathcal X|-N_t^{\min}>N_t^{\max}$.
  By \Cref{def:uniformly-sparse},
  \begin{equation}
  \begin{aligned}
    N_t^{\max} &= \max_{H_1 \in \+H} M \sum_{\ell=0}^{\delta_n m} \Pr_{H_2 \sim \+H} [|H_1 \cap H_2| = m-\ell]\\
    &\le \sum_{\ell=0}^{\delta_n m} \exp(\ell \log(p_{1M}^{-1}) + o(\log M))\\
    &\le 2\exp(\delta_n m \log(p_{1M}^{-1}) + o(\log M))
    = \exp(o(\log M)).
  \end{aligned}
  \end{equation}
  Since $N_t^{\min}\le N_t^{\max}$, and $\log M = m\log(p_{1M}^{-1}) \to \infty$, the estimate above implies $N_t^{\max} = M^{o(1)} = o(M)$.
  Hence, for all sufficiently large $n$,
  \begin{equation}
    |\mathcal X|-N_t^{\min} = M-N_t^{\min} \ge M-N_t^{\max} > N_t^{\max},
  \end{equation}
  so \Cref{lem:distance-based-fano} is applicable. Since $P_t \le \epsilon_n$, rearranging gives
  \begin{equation}
  \begin{aligned}
      I(X; Y) &\ge (1-\epsilon_n) \log \left( \frac{|\mathcal{H}|}{N_t^{\max}} \right) - \log 2\\
      &= (1-\epsilon_n) \left(\log M - \log N_t^{\max}\right)-\log 2 = (1 - o(1)) \log M.
    \end{aligned}
  \end{equation}
\qed

{\noindent \textbf{Proof of \Cref{lem:mutual-information-upperbound}}.}
  By definition, for any distribution $Q_Y$ we have
  \begin{equation}
    \begin{aligned}
      I(X;Y) &= \mathbb{E}_{P_{XY}} \left[ \log \frac{\mathrm{d} P_{Y|X}}{\mathrm{d} P_Y} \right] = \mathbb{E}_{P_{XY}} \left[ \log \frac{\mathrm{d} P_{Y|X} \mathrm{d} Q_Y}{\mathrm{d} Q_Y \mathrm{d} P_Y} \right] \\
      &= \mathbb{E}_{P_{XY}} \left[ \log \frac{\mathrm{d} P_{Y|X}}{\mathrm{d} Q_Y} \right] - D_{\mathrm{KL}}(P_Y \| Q_Y) 
      \leq \mathbb{E}_{P_X} \left[ D_{\mathrm{KL}}(P_{Y|X} \| Q_Y) \right].
    \end{aligned}
  \end{equation}
  Let $Q_Y = \+Q^{\otimes N}$, which means the weight of every edge in $Y$ is independently drawn from the distribution $\mathcal{Q}$. For a fixed $X$, the edge weights in the conditional distribution $P_{Y|X}$ are mutually independent. By the additivity of KL divergence under product distributions, we have $D_{\mathrm{KL}}(P_{Y|X} \| Q_Y) = m \cdot D_{\mathrm{KL}}(\mathcal{P} \| \mathcal{Q})$. Then we get \Cref{eq:KL-information-upper-bound}.
\qed

\subsection{Missing proofs of partial recovery lower bound}\label{sec:missing-proofs5}

{\noindent\textbf{Proof of \Cref{lem:sibson-fano}}.}

We build upon the framework established in Theorem 7.2 of \cite{esposito2024sibson}, with a simple modification for the setting of distance-based recovery lower bounds.

Theorem 5.3 of \cite{esposito2024sibson} has shown that
\begin{equation}
  I_{\alpha}(X; Y) = \sup_{f: \mathcal{X} \times \mathcal{Y} \to \mathbb{R}} \frac{\alpha}{\alpha - 1} \log \mathbb{E}_{P_{XY}} \left[ e^{(\alpha - 1) f(X, Y)} \right] - \log \mathbb{E}_{P_X Q_Y^*} \left[ e^{\alpha f(X, Y)} \right],
\end{equation}
where $Q_Y^*$ is the distribution that minimizes the divergence $D_{\alpha}(P_{XY} \| P_X Q_Y)$.

Consider $\alpha > 1, \ \beta > 0$, and let $f(X, \hat{X}) = \frac{\beta}{\alpha - 1} \mathbb{I}[\rho(X, \hat{X}) \le t]$. Define $\hat{p} = \Pr_{P_{X \hat X}}[\rho(X, \hat{X}) \le t]$ and $\hat{q} = \Pr_{P_X Q_{\hat X}^*}[\rho(X, \hat{X}) \le t]$. Since $X$ and $\hat{X}$ are independent in $\hat{q}$, define $p^\star_t := \max_x [\sum_{x': \rho(x,x') \le t} P_X(x')]$, then $\hat{q} \leq p^\star_t$. Therefore,
\begin{equation}
  \begin{aligned}
    I_{\alpha}(X; Y) &\geq I_{\alpha}(X, \hat{X}) \\
    &\geq \frac{\alpha}{\alpha - 1} \log \left( \hat{p} e^{\beta} + 1 - \hat{p} \right) - \log \left( \hat{q} e^{\frac{\alpha}{\alpha - 1} \beta} + 1 - \hat{q} \right) \\
    &\geq \frac{\alpha}{\alpha - 1} \log \left( \hat{p} (e^{\beta} - 1) + 1 \right) - \log \left( p^\star_t e^{\frac{\alpha}{\alpha - 1} \beta} + 1 - p^\star_t \right).
    \end{aligned}
\end{equation}

Setting $\gamma = e^\beta - 1$, and rearranging terms yields
\begin{equation}
  \Pr[\rho(X, \hat{X}) \le t] \leq \frac{\left( p^\star_t (\gamma + 1)^{\frac{\alpha}{\alpha - 1}} + 1 - p^\star_t \right)^{\frac{\alpha - 1}{\alpha}} \exp \left\{ \frac{\alpha - 1}{\alpha} I_\alpha(X, Y) \right\} - 1}{\gamma}.
\end{equation}

Taking $\gamma \to +\infty$, we obtain
\begin{equation}
  \Pr[\rho(X, \hat{X}) \le t] \le \left( p^\star_t e^{I_\alpha(X;Y)} \right)^{\frac{\alpha-1}{\alpha}}.
\end{equation}
\qed

{\noindent \textbf{Proof of \Cref{lem:sibson-upper}}.}
  Let $Q_Y = \+Q^{\otimes N}$, which means the weight of every edge in $Y$ is independently drawn from the distribution $\mathcal{Q}$. Then, we have:
  \begin{equation}
  \begin{aligned}
    I_{\alpha}(X; Y) \le D_{\alpha}(P_{XY} \| P_X Q_Y)
    &= \frac{1}{\alpha-1} \log \mathbb{E}_{P_{XY}} \left[ \left( \frac{\mathrm{d} P_{XY}}{\mathrm{d} P_X Q_Y} \right)^{\alpha - 1} \right]\\
    &= \frac{1}{\alpha-1} \log \mathbb{E}_{P_X} \left[\exp\left( (\alpha-1) D_\alpha(P_{Y|X} \| Q_Y) \right)\right].\\
  \end{aligned}
  \end{equation}
  For a fixed $X$, the edge weights in the conditional distribution $P_{Y|X}$ are mutually independent. By the additivity property of \Renyi divergence under product distributions \cite{van2014renyi}, we have $D_\alpha(P_{Y|X} \| Q_Y) = m \cdot D_\alpha(\mathcal{P} \| \mathcal{Q})$. Substituting into the above equation finishes the proof.
\qed

\section{First moment flat and uniformly sparse graph collections}
\label{sec:uniformly-sparse}

We present two sufficient conditions for uniform sparsity.

\subsection{First moment flat implies uniform sparsity}

\Cref{tbl:p1} presents several graph collections that are first moment flat.

\renewcommand{\arraystretch}{1.2}
\begin{table}[h!]
  \centering
  \begin{tabular}{|c|c|c|c|}
  \hline
  Graph & $M$ & $p_{1M}$ & $p_E$ \\
  \hline
  $k$-clique ($1 \ll k \ll \log n$) & $\binom{n}{k}$ & $(1+o(1)) \tp{\frac{ek}{n}}^{\frac{2}{k-1}}$ & $(1+o(1)) \tp{\frac{ek}{n}}^{\frac{2}{k-1}}$\\
  \hline 
  perfect matching & $(n-1)!! \sim \sqrt{2} \tp{\frac ne}^{n/2}$ & $(1+o(1))\frac en$ & $(1+o(1))\frac en$ \\
  \hline
  Hamiltonian cycle & $\frac 12 (n-1)!$ & $(1+o(1)) \frac en$ & $(1+o(1)) \frac en$ \\
  \hline
  $2k$-NN graph ($k \ll \log n$) & $\frac 12(n-1)!$ & $(1+o(1)) (\frac en)^{\frac 1k}$ & \makecell{$\le n^{-(1+o(1)) \frac {1}{k}}$ \\ See \Cref{sec:pc-2knn}}\\
  \hline
  \end{tabular}
\caption{Graph collections that are first moment flat}
  \label{tbl:p1}
\end{table}

\begin{theorem}\label{lem:large-intersection-probability2}
  Every first moment flat isomorphic graph collection $\+H_H$ satisfying $\log(p_{1M}(\+H_H)^{-1}) = \omega(1)$ is uniformly sparse.
\end{theorem}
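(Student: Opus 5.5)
We need to bound, for fixed $H_1\in\+H_H$ and each $\ell$, the quantity $\Pr_{H_2}[|H_1\cap H_2|=\ell]\,p_{1M}^{-\ell}$ by $M^{o(1)}$. The plan is a first-moment count over the possible intersection graphs $J$, followed by first moment flatness to compare each $J$ with $H$.

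\textbf{Step 1 (reduction to a count over subgraphs).} By symmetry (the automorphism group of $K_n$ acts transitively on $\+H_H$), the probability does not depend on $H_1$. Any $H_2$ with $|H_1\cap H_2|=\ell$ determines the edge-induced subgraph $J=H_1\cap H_2\subseteq H_1$ with $\ell$ edges, and $H_2\supseteq J$. Union bounding over $J$ gives
\[
\Pr_{H_2}[|H_1\cap H_2|=\ell]\le \sum_{J\subseteq H_1,\ |J|=\ell}\Pr_{H_2}[J\subseteq H_2].
\]
For a fixed labelled copy $J$, double counting pairs (copy of $J$, copy of $H$ containing it) gives
\[
\Pr_{H_2}[J\subseteq H_2]=\frac{N_{H}(J)}{|\+H_J|}\le \frac{2^{m}}{|\+H_J|},
\]
where $N_H(J)\le 2^m$ is the number of copies of $J$ inside a fixed copy of $H$. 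By definition $|\+H_J|=p_{1M}(\+H_J)^{-\ell}\ge p_E^{-\ell}$, since $p_E$ maximizes $p_{1M}(\+H_J)$ over $J\subseteq H$. The number of choices of $J$ is at most $\binom{m}{\ell}\le 2^m$. Hence
\[
\Pr[|H_1\cap H_2|=\ell]\,p_{1M}^{-\ell}\le 4^m\Big(\frac{p_E}{p_{1M}}\Big)^{\ell}.
\]

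\textbf{Step 2 (first moment flatness).} First moment flatness gives $\log(p_E/p_{1M})=o(\log p_{1M}^{-1})$, so the factor $(p_E/p_{1M})^{\ell}$ is at most $\exp(o(m\log p_{1M}^{-1}))=M^{o(1)}$. Since $\log M=m\log(p_{1M}^{-1})$ and $\log(p_{1M}^{-1})=\omega(1)$, we also have $4^m=\exp(O(m))=M^{o(1)}$. Taking the logarithm and maximizing over $H_1$ and $\ell$ yields the required $o(\log M)$ bound, together with the hypothesis $\log(p_{1M}^{-1})=\omega(1)$ that is part of \Cref{def:uniformly-sparse}.

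\textbf{Main obstacle.} The delicate point is the bound $\Pr[J\subseteq H_2]\le 2^m/|\+H_J|$. The double count should be made precise by counting embeddings rather than copies: the number of pairs (copy $J'$ of $J$, copy $H'$ of $H$ with $J'\subseteq H'$) equals $|\+H_H|\,N_H(J)$, and by transitivity each copy $J'$ lies in the same number of copies $H'$. This gives exactly $\Pr[J\subseteq H_2]=N_H(J)/|\+H_J|$. A second issue is the treatment of isolated vertices, which are excluded because $J$ is edge-induced. Everything else, namely the losses $2^m$, is absorbed precisely because $\log(p_{1M}^{-1})=\omega(1)$.
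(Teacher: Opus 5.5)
Your proof is correct and follows essentially the same route as the paper: a union bound over the possible intersection graphs $J\subseteq H_1$, the symmetry identity $\Pr[J\subseteq H_2]=M_{J,H_1}/M_{J,K_n}$, the bound $M_{J,K_n}^{-1}\le p_E^{\ell}$, and first moment flatness together with $\log(p_{1M}^{-1})=\omega(1)$ to absorb the remaining factors into $o(\log M)$. The only difference is that you use the cruder combinatorial factor $4^m$ where the paper uses $(e/\lambda)^{2\lambda m}$; both are $\exp(O(m))=M^{o(1)}$, so nothing is lost.
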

We need a few lemmas before proving this theorem.

\begin{lemma}\label{lem:large-intersection-bounds}
For any isomorphic graph collection $\+H_H$, any $H_1 \in \+H_H$, and any $\lambda\in\{1/m,2/m,\ldots,1\}$, we have
\begin{equation}\label{eq:isomorphic-overlap-bound0}
    \Pr_{H_2 \sim \mathcal{H}}[|H_1 \cap H_2| = \lambda m] \le p_E^{\lambda m} \cdot (e/\lambda)^{2\lambda m}.
\end{equation}
\end{lemma}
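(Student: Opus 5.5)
The plan is a union bound over the possible common subgraph, followed by a symmetry computation that turns the containment probability into the first moment threshold of a subgraph collection. Write $\ell=\lambda m$.

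\textbf{Step 1 (union bound over the overlap).} If $|H_1\cap H_2|=\ell$, then $J:=H_1\cap H_2$ is an $\ell$-edge edge-induced subgraph of $H_1$ with $J\subseteq H_2$. So
\begin{equation}
\Pr_{H_2}[|H_1\cap H_2|=\ell]\le \sum_{J\subseteq H_1,\ |J|=\ell}\Pr_{H_2\sim\+H_H}[J\subseteq H_2].
\end{equation}
The number of summands is $\binom{m}{\ell}\le (em/\ell)^\ell=(e/\lambda)^{\ell}$.

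\textbf{Step 2 (containment probability of a fixed copy).} Fix a labeled $\ell$-edge graph $J$ in $K_n$, with isolated vertices dropped. I will show
\begin{equation}
\Pr[J\subseteq H_2]=\frac{N_H(J)}{|\+H_J|},
\end{equation}
where $N_H(J)$ is the number of subgraphs of $H$ isomorphic to $J$, and $\+H_J$ is the collection of all copies of $J$ in $K_n$.

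To see this, double count the pairs $(J',H')$ with $J'\in\+H_J$, $H'\in\+H_H$ and $J'\subseteq H'$. Counting by $H'$ gives $M\cdot N_H(J)$ pairs. Now count by $J'$. The vertex-permutation group of $K_n$ acts transitively on $\+H_J$ and preserves $\+H_H$, so every $J'$ lies in the same number of copies of $H$. Hence each $J'$ lies in $M N_H(J)/|\+H_J|$ copies, and dividing by $M$ gives the formula.

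\textbf{Step 3 (bounding the ratio).} Since $N_H(J)\le\binom{m}{\ell}\le(e/\lambda)^\ell$, and by definition $|\+H_J|=p_{1M}(\+H_J)^{-\ell}$, we get
\begin{equation}
\Pr[J\subseteq H_2]\le (e/\lambda)^{\ell}\,p_{1M}(\+H_J)^{\ell}\le (e/\lambda)^{\ell}\,p_E^{\ell}.
\end{equation}
The last inequality uses $p_{1M}(\+H_J)\le \max_{J\subseteq H}p_{1M}(\+H_J)=p_E$, which is valid because $J$ is isomorphic to an edge-induced subgraph of $H$.

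\textbf{Conclusion and main care point.} Multiplying the count from Step 1 by the bound from Step 3 gives $p_E^{\lambda m}(e/\lambda)^{2\lambda m}$, as claimed.

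The only delicate point is the symmetry argument in Step 2. It needs $H_2$ to be uniform over an orbit of the permutation action, which holds because $\+H_H$ is an isomorphic collection. It also needs $J$ to be treated as an edge-induced subgraph, so that $|\+H_J|$ matches the definition of $p_{1M}(\+H_J)$ used in $p_E$. Everything else is routine.
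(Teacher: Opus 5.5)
Your proposal is correct and follows essentially the same route as the paper. You use a union bound over the common subgraph $J\subseteq H_1$, then the symmetry identity $\Pr[J\subseteq H_2]=M_{J,H_1}/M_{J,K_n}$ (you get it by double counting, while the paper writes $H_2=\sigma\circ H_1$ for a uniform random permutation $\sigma$), and finally the bounds $M_{J,H_1}\le\binom{m}{\lambda m}$ and $M_{J,K_n}^{-1}=p_{1M}(\mathcal{H}_J)^{\lambda m}\le p_E^{\lambda m}$; summing over all $\lambda m$-edge subsets of $H_1$ rather than only realized intersections is a cosmetic difference that gives the same $\binom{m}{\lambda m}$ count.
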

\begin{proof}
Let $M_{J,H}$ denote the number of subgraphs of $H$ which are isomorphic copies of $J$, and $K_n$ denote the complete graph with $n$ vertices. For any fixed $H_1 \in \mathcal H$, define $\+H^+ := \{H_1 \cap H_2 : |H_1 \cap H_2| = \lambda m, H_2 \in \+H\}$, then
\begin{equation}
  \begin{aligned}
    \Pr_{H_2 \sim \mathcal{H}}[|H_1 \cap H_2| = \lambda m] &\le \Pr_{H_2 \sim \mathcal{H}}[\exists J\in \+H^+,\ J \subseteq H_2] \le \sum_{J \in \+H^+} \Pr_{H_2 \sim \mathcal{H}}[J \subseteq H_2].\\
  \end{aligned}
\end{equation}
As $\+H$ consists of isomorphic graphs, uniformly sampling $H_2$ is equivalent to uniformly sampling a random vertex permutation, thus
\begin{equation}
  \begin{aligned}
    \sum_{J \in \+H^+} \Pr_{H_2 \sim \mathcal{H}}[J \subseteq H_2] &= \sum_{J \in \+H^+} \Pr_{\substack{\sigma \sim \mathrm{permutation(n)}}}[J \subseteq (\sigma \circ H_1)]\\
    &= \sum_{J \in \+H^+} \Pr_{\substack{\sigma \sim \mathrm{permutation(n)}}}[(\sigma^{-1} \circ J) \subseteq H_1]\\
    &= \sum_{J \in \+H^+} \Pr_{J' \sim \+H_{J}}[J' \subseteq H_1]\\
    &= \sum_{J \in \+H^+} \frac{M_{J,H_1}}{M_{J,K_n}} \le \frac{|\+H^+| \cdot (\max_{J \in \+H^+} M_{J,H_1})}{\min_{J \in \+H^+}M_{J,K_n}}.\\
  \end{aligned}
\end{equation}

By the definition of expectation threshold $p_E$, we have
\begin{equation}
    (\min_{J \in \+H^+}M_{J,K_n})^{-1} = (\max_{J \in \+H^+}p_{1M}(\+H_{J}))^{\lambda m} \le (\max_{J \subseteq H} p_{1M}(\+H_{J}))^{\lambda m} = (p_E(\+H_H))^{\lambda m}.
\end{equation}

Therefore
\begin{equation}
  \Pr_{H_2 \sim \mathcal{H}}[|H_1 \cap H_2| = \lambda m] \le p_E^{\lambda m} \cdot |\+H^+| \cdot (\max_{J \in \+H^+} M_{J,H_1}). 
\end{equation}

Since $|\+H^+| \le \binom{m}{\lambda m}$ and $M_{J,H_1} \le \binom{m}{\lambda m}$, we have
\begin{equation}
  \Pr_{H_2 \sim \mathcal{H}}[|H_1 \cap H_2| = \lambda m] \le p_E^{\lambda m} \cdot (e/\lambda)^{2\lambda m}.
\end{equation}
\end{proof}

\noindent{\textbf{Proof of \Cref{lem:large-intersection-probability2}}.}
The case $\ell=0$ is trivial, since the probability is at most $1$. For $\ell\ge1$, set $\lambda=\ell/m$. For first moment flat $\+H$, we have $\log(p_{E}^{-1}) = (1+o(1)) \log(p_{1M}^{-1})$ and $\log(p_{1M}^{-1})=\omega(1)$.
By \Cref{lem:large-intersection-bounds}, 
\begin{equation}
  \begin{aligned}
  \Pr_{H_2 \sim \mathcal{H}}[|H_1 \cap H_2| = \lambda m] &\le \exp\left(- \lambda m \log(p_E^{-1}) + 2 \lambda m \log (e/\lambda)\right)\\
  &\le \exp\left(- (1+o(1))\lambda m \log(p_{1M}^{-1}) + 2 \lambda m + 2 m\right)\\
  &= \exp\left(- \lambda m \log(p_{1M}^{-1}) + o(\log M)\right).\\
  \end{aligned}
\end{equation}
Therefore $\+H$ is uniformly sparse.
\qed

\subsection{Sufficiently large graph collections are uniformly sparse}
We also consider non-isomorphic graph collections $\+H$ that are \emph{sufficiently large}: these are 
$\+H$ that satisfy $\log (p_{1M}^{-1}) = (1 + o(1)) \log (N/m) = \omega(1)$. \Cref{tbl:p2} presents several graph collections that are sufficiently large (the asterisks indicate that they are also first moment flat).

\renewcommand{\arraystretch}{1.2}
\begin{table}[h!]
  \centering
  \begin{tabular}{|c|c|c|c|}
  \hline
  Graph & $M$ & $\log M$ & $\log (p_{1M}^{-1})$\\
  \hline
  Uniform $m$-subgraphs & $\binom{N}{m}$ & $(1+o(1)) m \log(N/m)$ & $(1+o(1)) \log(N/m)$\\
  \hline 
  Trees & $n^{n-2}$ & $(n-2) \log n$ & $(1+o(1)) \log n$\\
  \hline
	  \makecell{$k$-factors \\ ($k \le n^{o(1)}$)} & \makecell{$\exp(\frac{nk}{2} \log \frac{n}{k} + O(nk))$ \\ \cite{mckay1991asymptotic}} & $\frac{nk}{2} \log \frac{n}{k} + O(nk)$ & $(1+o(1)) \log \frac{n}{k}$\\
  \hline
  perfect matching\textsuperscript{*} & $(n-1)!!$ & $(1+o(1)) \frac{n}{2} \log n$ & $(1+o(1)) \log n$\\
  \hline
  Hamiltonian cycle\textsuperscript{*} & $\frac 12 (n-1)!$ & $(1+o(1)) n \log n$ & $(1+o(1)) \log n$\\
  \hline
  \end{tabular}
  \caption{Graph collections that are sufficiently large}
  \label{tbl:p2}
\end{table}

\begin{theorem}\label{lem:large-intersection-probability}
  Every sufficiently large $\+H$ is also uniformly sparse.
\end{theorem}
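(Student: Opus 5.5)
Fix $H_1\in\+H$ and an integer $\ell\in[0,m]$. I will bound $\Pr_{H_2\sim\+H}[|H_1\cap H_2|=\ell]$ by a pure counting argument that ignores the structure of $\+H$ and uses only its size. For $\ell=0$ the required inequality is trivial, since the probability is at most $1$ and $0\cdot\log(p_{1M}^{-1})=0$. For $\ell\ge1$, every $H_2$ with $|H_1\cap H_2|=\ell$ is an $m$-edge subset of $K_n$ that shares exactly $\ell$ edges with $H_1$. The number of such subsets is at most $\binom{m}{\ell}\binom{N-m}{m-\ell}$, and this bound holds whether or not the subsets lie in $\+H$. Hence
\begin{equation}
\Pr_{H_2\sim\+H}[|H_1\cap H_2|=\ell]\le \frac{\binom{m}{\ell}\binom{N}{m-\ell}}{M}.
\end{equation}

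Next I rewrite everything on the scale $\log(N/m)$. The sufficiently-large hypothesis gives $\log M = m\log(p_{1M}^{-1}) = (1+o(1))\, m\log(N/m)$, and also $\log(N/m)=\omega(1)$. I use the standard estimates $\binom{m}{\ell}\le 2^m$ and $\binom{N}{m-\ell}\le (eN/(m-\ell))^{m-\ell}$. The latter is at most $\exp\bigl((m-\ell)\log(N/m) + O(m)\bigr)$ once one checks that $(m-\ell)\log(m/(m-\ell))\le m/e$. This gives
\begin{equation}
\log \Pr[|H_1\cap H_2|=\ell] + \ell\log(p_{1M}^{-1}) \le (m-\ell)\log\tfrac{N}{m} + O(m) - (m-\ell)\log(p_{1M}^{-1}).
\end{equation}
Substituting $\log(p_{1M}^{-1})=(1+o(1))\log(N/m)$, the right-hand side becomes $o\bigl((m-\ell)\log(N/m)\bigr)+O(m)$, which is $o(m\log(N/m))$. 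Because $\log(N/m)=\omega(1)$, the $O(m)$ term is $o(\log M)$, and since $\log M=\Theta(m\log(N/m))$ the whole expression is $o(\log M)$.

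The step I expect to need the most care is keeping the errors uniform over $\ell$ and over $H_1$. The $o(1)$ in $\log(p_{1M}^{-1})=(1+o(1))\log(N/m)$ has to be multiplied by $(m-\ell)\le m$, and the crude binomial bounds must not depend on $\ell$ in a way that breaks uniformity. Both appear to be fine, because every estimate above is bounded by a quantity depending only on $m$ and $N$. Finally, the condition $\log(p_{1M}^{-1})=\omega(1)$ in the definition of uniform sparsity holds directly by assumption.
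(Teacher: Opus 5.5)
Your proof is correct and follows essentially the same route as the paper. You bound the overlap probability by $\frac{1}{M}\binom{m}{\ell}\binom{N-m}{m-\ell}$, ignoring the structure of $\mathcal{H}$, estimate this as $\exp\bigl((m-\ell)\log(N/m)+O(m)\bigr)/M$, and then use $\log M = m\log(p_{1M}^{-1}) = (1+o(1))\,m\log(N/m)$ together with $\log(N/m)=\omega(1)$ to absorb the errors into $o(\log M)$ uniformly in $\ell$ and $H_1$. The only differences are cosmetic: you use $\binom{m}{\ell}\le 2^m$ and $x\log(1/x)\le 1/e$ where the paper uses a binary-entropy bound, and the paper treats $\ell=m$ separately, which your argument covers with the convention $\binom{N}{0}=1$.
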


For sufficiently large $\+H$ and distributions satisfying \Cref{asm:renyi-stable-upper}, we also show in \Cref{sec:topm} that a simple top-$m$ selection based on the likelihood ratio recovers almost all hidden edges with high probability, if arbitrary $m$-edge subsets are allowed.

\begin{lemma}\label{lem:large-intersection-bounds2}
For any graph collection $\+H$, any $H_1 \in \+H$, and any $\lambda\in[0,1]$ with $\lambda m\in\mathbb Z$, we have
\begin{equation}
    \Pr_{H_2 \sim \mathcal{H}}[|H_1 \cap H_2| = \lambda m] \le \frac{1}{M} \exp \left( (1-\lambda)m \log (N/m) + (1+\log 2)m\right).
\end{equation}
\end{lemma}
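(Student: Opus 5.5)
Fix $H_1\in\+H$ and write $\ell=\lambda m$. The plan is a direct counting bound: every $H_2$ with $|H_1\cap H_2|=\ell$ is an $m$-edge subset of $K_n$ that contains exactly $\ell$ edges of $H_1$ and $m-\ell$ edges outside $H_1$. We drop the requirement $H_2\in\+H$ and count all such subsets. There are at most $\binom{m}{\ell}\binom{N-m}{m-\ell}\le 2^m\binom{N}{m-\ell}$ of them. Since $H_2$ is uniform on $\+H$, dividing by $M$ gives
\begin{equation}
\Pr_{H_2\sim\+H}[|H_1\cap H_2|=\lambda m]\le \frac{1}{M}\,2^m\binom{N}{(1-\lambda)m}.
\end{equation}
The factor $2^m$ supplies the $m\log 2$ term in the claimed exponent.

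Next we bound the binomial with $\binom{N}{k}\le (eN/k)^k$ for $k=(1-\lambda)m$. This gives $\exp\bigl((1-\lambda)m\log(N/m)+(1-\lambda)m+(1-\lambda)m\log\frac{1}{1-\lambda}\bigr)$. We need the last two terms together to be at most $m$, i.e.\ $x+x\log(1/x)\le 1$ for $x=1-\lambda\in(0,1]$.
\begin{itemize}
\item The function $g(x)=x(1+\log(1/x))$ has $g'(x)=\log(1/x)\ge 0$, so it is increasing on $(0,1]$ with $g(1)=1$.
\item The endpoint $x=0$, i.e.\ $\lambda=1$, is trivial, since then $\binom{N}{0}=1$; the same holds with the convention $0\log 0=0$.
\end{itemize}
Combining these gives the exponent $(1-\lambda)m\log(N/m)+m+m\log 2=(1-\lambda)m\log(N/m)+(1+\log 2)m$, as claimed.

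The only point needing care is this elementary inequality $x(1+\log(1/x))\le 1$ on $[0,1]$, which absorbs the $\log\frac{1}{1-\lambda}$ blow-up into the constant $m$. Everything else is routine counting, and the argument uses no structure of $\+H$ beyond $|\+H|=M$ and every member having $m$ edges.
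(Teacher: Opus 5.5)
Your proof is correct and follows essentially the paper's route: drop the constraint $H_2\in\mathcal H$, count the $\binom{m}{\ell}\binom{N-m}{m-\ell}$ edge sets with the prescribed overlap, divide by $M$, and apply elementary binomial estimates. The only difference is how you absorb the $O(m)$ slack. You use $\binom{m}{\ell}\le 2^m$ together with $x(1+\log(1/x))\le 1$, whereas the paper uses $(e/\lambda)^{\lambda m}\bigl(eN/((1-\lambda)m)\bigr)^{(1-\lambda)m}$ and bounds the binary entropy by $\log 2$; your version handles $\lambda=0$ without a separate case.
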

\begin{proof}
	  For any fixed $H_1 \in \mathcal H$,
	  the cases $\lambda=0$ and $\lambda=1$ are immediate from $\binom{N-m}{m}\le (eN/m)^m$ and $\Pr[H_2=H_1]=1/M$, respectively, so assume $0<\lambda<1$. Then
	  \begin{equation}
    \begin{aligned}
      \Pr_{H_2 \sim \mathcal{H}}[|H_1 \cap H_2| = \lambda m] 
      &\le \frac{1}{M}\sum_{H_2\in \binom{[N]}{m}} \mathbb I[|H_1 \cap H_2| = \lambda m]\\
      &= \frac{1}{M} \binom{m}{\lambda m} \binom{N-m}{(1-\lambda)m}\\
      &\le \frac{1}{M} \left( \frac{e}{\lambda} \right)^{\lambda m} \left( \frac{eN}{(1-\lambda) m} \right)^{(1-\lambda) m}\\
      &= \frac{1}{M} \exp \left( (1-\lambda)m \log (N/m) + m \left[ 1 - \lambda \log \lambda - (1-\lambda) \log (1-\lambda) \right]\right)\\
      &\le \frac{1}{M} \exp \left( (1-\lambda)m \log (N/m) + (1+\log 2)m\right).
    \end{aligned}
  \end{equation}
  The last inequality holds because the binary entropy $-\lambda \log \lambda -(1-\lambda) \log (1-\lambda)$ is bounded by $\log 2$.
\end{proof}

\noindent{\textbf{Proof of \Cref{lem:large-intersection-probability}}.}
When $\log (p_{1M}^{-1}) = (1+o(1)) \log (N/m)$, since $\log M = m \log (p_{1M}^{-1})$, by \Cref{lem:large-intersection-bounds2}, we have
\begin{equation}
\begin{aligned}
  \Pr_{H_2 \sim \mathcal{H}}[|H_1 \cap H_2| = \lambda m] 
  &\le \frac{1}{M} \exp \left( (1-\lambda)m \log (N/m) + (1+\log 2)m\right)\\
  &\le \exp \left( -m \log(p_{1M}^{-1}) + (1-\lambda) m \log (N/m) + (1+\log 2)m \right)\\
  &\le \exp \left( -\lambda m \log(p_{1M}^{-1}) + (1-\lambda)m(\log (N/m)- \log(p_{1M}^{-1})) + (1+\log 2)m \right)\\
  &\le \exp \left( -\lambda m \log(p_{1M}^{-1}) + o(\log M) \right).\\
\end{aligned}
\end{equation}
Therefore $\+H$ is uniformly sparse.
\qed

\section{\Renyi Divergence Stability for common distributions}
\label{sec:renyi-stability}

We briefly recall the definition of \Renyi divergence. For $\alpha \in (0,1) \cup (1,+\infty)$, the \Renyi divergence is defined as
\begin{equation}
\Dalpha := \frac{1}{\alpha - 1} \log \int \left(\frac{\mathrm{d} \+P}{\mathrm{d} \+Q}\right)^{\alpha} \mathrm{d} \+Q.
\end{equation}
When $\alpha = 1$, the \Renyi divergence coincides with the KL divergence:
\begin{equation}
D_1(\+P \| \+Q) := \lim_{\alpha \to 1} \Dalpha = \Dkl.
\end{equation}
Under this definition, \Renyi divergence is non-decreasing in $\alpha$, and continuous in $\alpha$ on the domain
$[0, 1] \cup \left\{ \alpha \in (1, \infty] \mid \Dalpha < \infty \right\}$ (see \cite{van2014renyi}).

Let $\psi_Q$ be the log moment generating function (log MGF) of the random variable $\log \frac{\mathrm{d} \+P}{\mathrm{d} \+Q}$ under measure $\+Q$, defined in~\cref{eq:log-mgf} then:
\begin{equation}
  \Dalpha = \frac{1}{\alpha - 1} \psi_Q(\alpha).
\end{equation}
Using a Taylor expansion around $\alpha = 1$, we obtain
\begin{equation}
  \psi_Q(\alpha) = \psi_Q(1) + \psi'_Q(1)(\alpha - 1) + \frac{1}{2} \psi''_Q(1)(\alpha - 1)^2 + o((\alpha - 1)^2),
\end{equation}
where $\psi_Q(1) = 0$, $\psi'_Q(1) = \Dkl$, and $\psi''_Q(1) = \operatorname{Var}_{\+P}[\log \frac{\mathrm{d} \+P}{\mathrm{d} \+Q}]$. This yields the approximation
\begin{equation}
\Dalpha = \Dkl + \frac{1}{2}(\alpha - 1) \operatorname{Var}_{\+P}\left[\log \frac{\mathrm{d} \+P}{\mathrm{d} \+Q}\right] + o(\alpha - 1).
\end{equation}
However, in our setting, each $n$ corresponds to a pair of distributions $(\+P_n, \+Q_n)$, so a non-asymptotic bound is required. Intuitively, the Taylor expansion suggests that \Cref{asm:renyi-stable-upper} is consistent with $\operatorname{Var}_{\+P}[\log \frac{\mathrm{d} \+P}{\mathrm{d} \+Q}] = o(\Dkl^2)$, while \Cref{asm:renyi-stable-lower} is consistent with $\operatorname{Var}_{\+P}[\log \frac{\mathrm{d} \+P}{\mathrm{d} \+Q}] = O(\Dkl^2)$.

\subsection{Bernoulli distributions}

\renewcommand{\Dkl}{D_{\mathrm{KL}}\left( \mathrm{Bern}(p) \| \mathrm{Bern}(q) \right)}
\renewcommand{\Dalpha}{D_\alpha\left( \mathrm{Bern}(p) \| \mathrm{Bern}(q) \right)}

By definition, the KL divergence and \Renyi divergence between two Bernoulli distributions are given respectively by:
\begin{equation}
  \begin{aligned}
    \Dkl &= p \log \frac{p}{q} + (1-p) \log \frac{1-p}{1-q},\\
    \Dalpha &=\frac{1}{\alpha-1} \log \left[ p^\alpha q^{1-\alpha}+(1-p)^\alpha (1-q)^{1-\alpha} \right].
  \end{aligned}
\end{equation}

\subsubsection{\texorpdfstring{$\mathcal{P} = \mathrm{Bern}(1),\ \mathcal{Q} = \mathrm{Bern}(q)$}{P=Bern(1),Q=Bern(q)}}

In this case,
\begin{equation}
  D_{\mathrm{KL}}(\mathrm{Bern}(1)\|\mathrm{Bern}(q))
  =
  D_{\alpha}(\mathrm{Bern}(1)\|\mathrm{Bern}(q))
  =
  -\log q.
\end{equation}
It is evident that \Cref{asm:renyi-stable-upper} and \Cref{asm:renyi-stable-lower} hold.

\subsubsection{\texorpdfstring{$\mathcal{P} = \mathrm{Bern}(p),\ \mathcal{Q} = \mathrm{Bern}(q)$}{p=Bern(p),q=Bern(q)} with fixed \texorpdfstring{$p \in (0,1)$}{p in (0,1)} and \texorpdfstring{$\Dkl\to\infty$}{Dkl -> infinity}}\label{sec:renyi-stability-bern-hard}

In this case we can only verify that \Cref{asm:renyi-stable-lower} holds:

\begin{lemma}\label{lem:bern-stability}
The \Renyi divergence between Bernoulli distributions has the following bound:
\begin{equation}
  \Bigl| \Dalpha - \Dkl \Bigr| \le \frac{1}{2} \max \left\{ \left| \log \frac{p}{q} \right|, \left| \log \frac{1-p}{1-q} \right| \right\}^2 \cdot |\alpha - 1|.
\end{equation}
\end{lemma}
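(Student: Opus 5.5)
Write $a := \log\frac{p}{q}$, $b := \log\frac{1-p}{1-q}$ and $K := \max\{|a|,|b|\}$. The approach is to view both divergences through the cumulant generating function of the log-likelihood ratio under $\+P$. Let $X=\log\frac{\mathrm d\+P}{\mathrm d\+Q}$ under $\mathrm{Bern}(p)$. Then $X=a$ with probability $p$ and $X=b$ with probability $1-p$. Set
\begin{equation}
  g(s) := \log \mathbb E_{\+P}\left[e^{sX}\right] = \log\left(p e^{sa} + (1-p)e^{sb}\right).
\end{equation}
Since $\int (\mathrm d\+P/\mathrm d\+Q)^\alpha \mathrm d\+Q = \mathbb E_{\+P}[e^{(\alpha-1)X}]$, we have $D_\alpha = g(\alpha-1)/(\alpha-1)$ for $\alpha\neq 1$. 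We also have $g(0)=0$ and $g'(0)=\mathbb E_{\+P}[X]=D_{\mathrm{KL}}$.

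The key step is to bound the second derivative uniformly in $s$. For each $s$, $g''(s)$ is the variance of $X$ under the exponentially tilted two-point law that puts mass proportional to $pe^{sa}$ on $a$ and $(1-p)e^{sb}$ on $b$. Call these tilted weights $\pi_s$ and $1-\pi_s$. The variance of a two-point variable is
\begin{equation}
  g''(s) = \pi_s(1-\pi_s)(a-b)^2 \le \tfrac14 (a-b)^2 .
\end{equation}
Because $|a-b|\le |a|+|b|\le 2K$, this gives $g''(s)\le K^2$ for every real $s$.

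Next apply Taylor's theorem with Lagrange remainder at $0$ and $s=\alpha-1$:
\begin{equation}
  g(s) = s\,D_{\mathrm{KL}} + \tfrac12 g''(\xi)s^2
\end{equation}
for some $\xi$ between $0$ and $s$. Dividing by $s$ yields $D_\alpha - D_{\mathrm{KL}} = \tfrac12 g''(\xi)(\alpha-1)$. Hence $|D_\alpha - D_{\mathrm{KL}}|\le \tfrac12 K^2|\alpha-1|$, which is the claimed inequality. The case $\alpha=1$ is trivial.

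No step is a real obstacle; the only care needed is degenerate cases. If $p\in\{0,1\}$ the law is a point mass, so $g''\equiv 0$ and the bound is trivial. In the paper's setting $0<q<1$, so $a$ and $b$ are finite whenever they are used. The bound $(a-b)^2\le 4K^2$ is what produces exactly the constant $\tfrac12$ in the statement.
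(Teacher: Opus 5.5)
Your proposal is correct and follows essentially the same argument as the paper: your $g(s)$ is exactly the paper's $R(\alpha)=\log S(\alpha)$ under the reparametrization $s=\alpha-1$. Your tilted two-point variance $\pi_s(1-\pi_s)(a-b)^2\le \frac14(a-b)^2\le K^2$ is the paper's bound on $R''$. The Lagrange-remainder Taylor expansion at $\alpha=1$, using $R(1)=0$ and $R'(1)=D_{\mathrm{KL}}$, is likewise the same final step as in the paper.
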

\begin{proof}
Let $S(\alpha) = p^{\alpha} q^{1-\alpha} + (1-p)^{\alpha} (1-q)^{1-\alpha}$, then
\begin{equation}
  \begin{aligned}
    S'(\alpha) &= p^{\alpha} q^{1-\alpha} \left[\log \frac{p}{q}\right] + (1-p)^{\alpha} (1-q)^{1-\alpha} \left[\log \frac{1-p}{1-q}\right],\\
    S''(\alpha) &= p^{\alpha} q^{1-\alpha} \left[\log \frac{p}{q}\right]^2 + (1-p)^{\alpha} (1-q)^{1-\alpha} \left[\log \frac{1-p}{1-q}\right]^2.
  \end{aligned}
\end{equation}

Let $R(\alpha) = \log S(\alpha)$, then
\begin{equation}\label{eq:bern-r}
  \begin{aligned}
  R''(\alpha) &= \frac{S''(\alpha) S(\alpha) - (S'(\alpha))^2}{S(\alpha)^2}\\
  &= \frac{p^{\alpha} q^{1-\alpha} (1-p)^{\alpha} (1-q)^{1-\alpha} \left[ \log\left(\frac{p}{q}\right) - \log\left(\frac{1-p}{1-q}\right) \right]^2}{\left( p^{\alpha} q^{1-\alpha} + (1-p)^{\alpha} (1-q)^{1-\alpha} \right)^2}\\
  &\le \frac 14 \left[ \log \left(\frac{p}{q}\right) - \log\left(\frac{1-p}{1-q}\right) \right]^2\\
  &\le \max \left\{ \left| \log \frac{p}{q} \right|, \left| \log \frac{1-p}{1-q} \right| \right\}^2.
  \end{aligned}
\end{equation}
Where the first inequality holds because $4xy \leq (x+y)^2$, and the second inequality holds because $|x-y| \leq |x| + |y| \leq 2\max\{|x|, |y|\}$. Performing a Taylor expansion of $R$ at $\alpha = 1$, we have
\begin{equation}
  R(\alpha) = R(1) + R'(1)(\alpha - 1) + \frac{1}{2} R''(\xi) (\alpha - 1)^2, \quad \text{for some}\ \xi \in [\min(\alpha, 1), \max(\alpha, 1)].
\end{equation}
Since $R(1) = \log S(1) = 0,\ R'(1) = \frac{S'(1)}{S(1)} = \Dkl$,
\begin{equation}
  \Dalpha = \frac{R(\alpha)}{\alpha-1} = \Dkl + \frac{1}{2} R''(\xi) (\alpha-1).
\end{equation}
Substituting with~\cref{eq:bern-r} completes the proof.
\end{proof}

\begin{lemma}
  When $p \in (0,1)$ is fixed and $\Dkl\to\infty$, \Cref{asm:renyi-stable-lower} holds.
\end{lemma}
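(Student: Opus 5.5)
The plan is to derive \Cref{asm:renyi-stable-lower} directly from the quantitative bound in \Cref{lem:bern-stability}. The point is that when $p$ is fixed, the log-likelihood ratio magnitude
\[
K := \max\left\{ \left|\log\tfrac{p}{q}\right|, \left|\log\tfrac{1-p}{1-q}\right| \right\}
\]
is at most a constant multiple of $D := D_{\mathrm{KL}}(\mathrm{Bern}(p)\|\mathrm{Bern}(q))$. Then the quadratic-in-$K$ error term in \Cref{lem:bern-stability} can be absorbed into $\theta D$ by taking $\beta_n - 1$ of order $1/D$.

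\textbf{Step 1: bound $K$ linearly in $D$, uniformly in $q$.} Write $D = p\log\frac{p}{q} + (1-p)\log\frac{1-p}{1-q}$.
\begin{itemize}
\item Since $1-q \le 1$, the second term is at least $(1-p)\log(1-p)$. Hence $p\log\frac{p}{q} \le D - (1-p)\log(1-p) \le D + \log 2$. This controls $\log\frac pq$ when it is positive.
\item When $\log\frac pq$ is negative, its magnitude is $\log\frac qp \le \log\frac1p$.
\item The same argument with the roles of $p$ and $1-p$ exchanged bounds $\left|\log\frac{1-p}{1-q}\right|$.
\end{itemize}
Setting $c := \min\{p, 1-p\} > 0$, which is a fixed constant, these give
\[
K \le \frac{D + \log 2}{c} + \log\frac1c.
\]
Because $D \to \infty$, for large $n$ this yields $K \le C D$ with $C := 2/c$. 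This step needs no case analysis on whether $q \to 0$ or $q \to 1$; I expect it to be the only mildly delicate point. The subtlety is making the bound uniform over $q$ rather than arguing along subsequences.

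\textbf{Step 2: choose $\beta_n$.} Fix $\theta > 0$ and set $\beta_n := 1 + \frac{2\theta}{C^2 D}$. Then $\beta_n > 1$, $\beta_n \to 1$ since $D \to \infty$, and $\beta_n - 1 = \Omega(1/D)$. By \Cref{lem:bern-stability},
\[
D_{\beta_n}(\mathrm{Bern}(p)\|\mathrm{Bern}(q)) \le D + \tfrac12 K^2(\beta_n - 1) \le D + \tfrac12 C^2 D^2 \cdot \frac{2\theta}{C^2 D} = (1+\theta) D.
\]
This holds for all sufficiently large $n$, which is exactly \Cref{asm:renyi-stable-lower}.

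Finiteness of $D_{\beta_n}$ causes no issue, since for $q \in (0,1)$ the Bernoulli Rényi divergence of any order is finite. The edge cases $q \in \{0,1\}$ are excluded by absolute continuity together with $p \in (0,1)$.
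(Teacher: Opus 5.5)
Your proof is correct and follows the paper's route. The paper likewise bounds $K$ by a constant times $D_{\mathrm{KL}}$, sets the order to $1+\frac{2\theta}{C^{2}D_{\mathrm{KL}}}$, and applies \Cref{lem:bern-stability}. The one addition on your side is that you actually prove the linear bound $K=O(D_{\mathrm{KL}})$ uniformly in $q$, via $p\log\frac{p}{q}\le D_{\mathrm{KL}}+\log 2$ and its mirror image, whereas the paper only asserts it.
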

\begin{proof}
  Under these assumptions, 
  \begin{equation}
    \max \left\{ \left| \log \frac{p}{q} \right|, \left| \log \frac{1-p}{1-q} \right| \right\} = O(\Dkl),
  \end{equation}
  and there exists a constant $c > 0$ such that $\max \left\{ \left| \log \frac{p}{q} \right|, \left| \log \frac{1-p}{1-q} \right| \right\} \le c \cdot \Dkl$.

   By \Cref{lem:bern-stability}, for any constant $\theta > 0$, let $\alpha_n = 1 + \frac{2\theta}{c^2 \Dkl}$, then
  \begin{equation}
    \begin{aligned}
      \Dalpha &\le \Dkl + \frac{1}{2} \max \left\{ \left| \log \frac{p}{q} \right|, \left| \log \frac{1-p}{1-q} \right| \right\}^2 \cdot (\alpha-1)\\
      &\le (1 + \theta) \cdot \Dkl.
    \end{aligned}
  \end{equation}
\end{proof}

\subsection{Gaussian distributions}

\renewcommand{\Dkl}{D_{\mathrm{KL}}\left( N(\mu,1) \| N(0,1) \right)}
\renewcommand{\Dalpha}{D_\alpha\left( N(\mu,1) \| N(0,1) \right)}

By definition, the KL divergence and \Renyi divergence between two Gaussian distributions are given respectively by:
\begin{equation}
  \begin{aligned}
    \Dkl &= \frac{\mu^2}{2},\\
    \Dalpha &= \frac{\alpha \mu^2}{2}.
  \end{aligned}
\end{equation}

For \Cref{asm:renyi-stable-upper}, take $\alpha_n = 1 - \Dkl^{-1/2}$; for \Cref{asm:renyi-stable-lower}, take $\beta_n = 1 + \Dkl^{-1/2}$. In both cases, the order parameter tends to $1$ and its distance from $1$ is $\omega(1/\Dkl)$. Moreover, for any $\alpha \in \{\alpha_n,\beta_n\}$,
\begin{equation}
  \begin{aligned}
    \Bigl|\Dalpha - \Dkl\Bigr| &= \frac{\mu^2}{2} |\alpha-1|\\
    &= \Dkl^{1/2}\\
    &= o(\Dkl).
  \end{aligned}
\end{equation}
We have $\Dalpha = (1+o(1)) \Dkl$, which satisfies \Cref{asm:renyi-stable-upper} and \Cref{asm:renyi-stable-lower}.

\subsection{Exponential distributions}

\renewcommand{\Dkl}{D_{\mathrm{KL}}\left( \mathrm{Exp}(\lambda_1) \| \mathrm{Exp}(\lambda_2) \right)}
\renewcommand{\Dalpha}{D_\alpha\left( \mathrm{Exp}(\lambda_1) \| \mathrm{Exp}(\lambda_2) \right)}

By definition, the KL divergence and \Renyi divergence between two Exponential distributions are given respectively by:
\begin{equation}
\begin{aligned}
  \Dkl &= \frac{\lambda_2}{\lambda_1} + \log\left(\frac{\lambda_1}{\lambda_2}\right) - 1,\\
  \Dalpha &= \frac{1}{\alpha - 1} \left[ \alpha \log\left( \frac{\lambda_1}{\lambda_2} \right) - \log\left( \alpha \frac{\lambda_1}{\lambda_2} + 1 - \alpha \right) \right].\\
\end{aligned}
\end{equation}
Since the exponential distribution is invariant under scaling up to a change in the rate parameter, without loss of generality, we can set $t = \lambda_1 / \lambda_2$, then
\begin{equation}
  \begin{aligned}
    \Dkl &= \frac{1}{t} + \log t - 1,\\
    \Dalpha &= \frac{1}{\alpha - 1} \left[ \alpha \log (t) - \log\left( \alpha t + 1 - \alpha \right) \right].\\
  \end{aligned}
\end{equation}

\begin{lemma}\label{lem:exp-stability}
  When $\alpha \in [1 - \delta, 1 + \delta]$ and $t-\delta |t-1|>0$, the \Renyi divergence between Exponential distributions has the following bound:
  \begin{equation}
    \Bigl| \Dalpha - \Dkl \Bigr| \le \frac{(t - 1)^2}{2\left( t - \delta |t - 1| \right)^2} \cdot |\alpha - 1|.
  \end{equation}
\end{lemma}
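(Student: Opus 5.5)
Mirroring the Bernoulli case (\Cref{lem:bern-stability}), I will Taylor expand $R(\alpha) := (\alpha-1)\Dalpha = \alpha\log t - \log(\alpha t + 1-\alpha)$ around $\alpha=1$ with a Lagrange remainder. The function $R$ is smooth wherever $\alpha t + 1 - \alpha = 1 + \alpha(t-1) > 0$. I first check that this holds on the whole interval $[1-\delta,1+\delta]$. Write $1+\alpha(t-1) = t + (\alpha-1)(t-1)$. Since $|\alpha-1|\le\delta$, this is at least $t-\delta|t-1|$, which is positive by hypothesis. So $R$ is $C^2$ on an interval containing both $1$ and $\alpha$.

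Next I compute the derivatives. We have $R'(\alpha) = \log t - \frac{t-1}{1+\alpha(t-1)}$ and $R''(\alpha) = \frac{(t-1)^2}{(1+\alpha(t-1))^2}$. At $\alpha=1$, $R(1) = \log t - \log t = 0$ and $R'(1) = \log t - \frac{t-1}{t} = \log t + \frac1t - 1$, which is exactly $\Dkl$. Taylor's theorem then gives
\begin{equation}
R(\alpha) = (\alpha-1)\Dkl + \tfrac12 R''(\xi)(\alpha-1)^2
\end{equation}
for some $\xi$ between $1$ and $\alpha$. Dividing by $\alpha-1$ (the case $\alpha=1$ is trivial, since both sides vanish) yields $\Dalpha - \Dkl = \frac12 R''(\xi)(\alpha-1)$.

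Finally, since $\xi\in[1-\delta,1+\delta]$, the same lower bound applies at $\xi$: $1+\xi(t-1) \ge t-\delta|t-1| > 0$. Hence $R''(\xi) \le \frac{(t-1)^2}{(t-\delta|t-1|)^2}$, and this gives the claimed bound. The only delicate point is making sure the remainder point $\xi$ stays in the region where the logarithm's argument is bounded below by $t-\delta|t-1|$; that follows from the convexity of the interval together with the linearity of $1+\alpha(t-1)$ in $\alpha$. Everything else is routine calculus.
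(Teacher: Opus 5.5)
Your proof is correct and follows the paper's argument essentially verbatim: the same function $\alpha\log t-\log(\alpha t+1-\alpha)$, the same Lagrange-remainder Taylor expansion at $\alpha=1$ with $h'(1)$ identified as the KL divergence, and the same lower bound $t-\delta|t-1|$ on the denominator of the second derivative at the intermediate point $\xi$. Your explicit check that $1+\alpha(t-1)=t+(\alpha-1)(t-1)$ stays positive on the whole interval adds some rigor: the paper leaves the smoothness of the function on that interval implicit.
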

\begin{proof}
  Let $h(\alpha) = \alpha \log t - \log \left( \alpha t + 1 - \alpha \right)$, then
  \begin{equation}
    \begin{aligned}
      h'(\alpha) &= \log t - \frac{t - 1}{(t-1) \alpha + 1},\\
      h''(\alpha) &= \frac{(t - 1)^2}{\left((t-1) \alpha + 1\right)^2}.
    \end{aligned}
  \end{equation}
  Performing a Taylor expansion of $h$ at $\alpha = 1$, we have
  \begin{equation}
    h(\alpha) = h(1) + h'(1)(\alpha-1) + \frac{1}{2}h''(\xi)(\alpha - 1)^2 \quad \text{for some}\ \xi \in [\min(\alpha, 1), \max(\alpha, 1)],
  \end{equation}
  Since $h(1) = 0,\ h'(1) = \log t - \frac{t - 1}{t} = \Dkl$,
  \begin{equation}\label{eq:exp-dist-taylor}
    \Dalpha = \frac{h(\alpha)}{\alpha-1} = \Dkl + \frac{1}{2} h''(\xi) (\alpha - 1).
  \end{equation}

  For $\xi \in [1 - \delta, 1 + \delta]$, we have the lower bound on the denominator:
  \begin{equation}
  \left| (t - 1) \xi + 1 \right| \geq \left| t - \delta |t - 1| \right|.
  \end{equation}
  Therefore, the derivative is upper bounded by:
  \begin{equation}
  \left| h''(\xi) \right| \leq \frac{(t - 1)^2}{\left( t - \delta |t - 1| \right)^2}.
  \end{equation}
  Substituting this into \cref{eq:exp-dist-taylor} finishes the proof.
\end{proof}

\subsubsection{\texorpdfstring{$\+P = \mathrm{Exp}(\lambda_1),\ \+Q = \mathrm{Exp}(\lambda_2)$}{P=Exp(lambda1),Q=Exp(lambda2)} with \texorpdfstring{$\lambda_1 \gg \lambda_2$}{lambda1 >> lambda2}}

  When $t \gg 1$, $\Dkl = (1+o(1)) \log t$, we will show that \Cref{asm:renyi-stable-upper} and \Cref{asm:renyi-stable-lower} holds:
  
	  For \Cref{asm:renyi-stable-upper}, take $\alpha_n = 1-\Dkl^{-1/2}$; for \Cref{asm:renyi-stable-lower}, take $\beta_n = 1+\Dkl^{-1/2}$. In both cases, the order parameter tends to $1$ and its distance from $1$ is $\omega(1/\Dkl)$. Moreover, for any $\alpha \in \{\alpha_n,\beta_n\}$, we have $\left| h''(\xi) \right| = \frac{(1+o(1)) t^2}{(1+o(1)) t^2} = 1+o(1)$. Thus, there exists a constant $c > 0$ such that $\left| h''(\xi) \right| \leq c$, and
  \begin{equation}
    \begin{aligned}
      \Bigl|\Dalpha - \Dkl\Bigr| &\le c \cdot |\alpha - 1| = o(1).\\
    \end{aligned}
  \end{equation}
  Therefore we have $\Dalpha = (1+o(1)) \Dkl$, which satisfies \Cref{asm:renyi-stable-upper} and \Cref{asm:renyi-stable-lower}.

\subsubsection{\texorpdfstring{$\+P = \mathrm{Exp}(\lambda_1),\ \+Q = \mathrm{Exp}(\lambda_2)$}{P=Exp(lambda1),Q=Exp(lambda2)} with \texorpdfstring{$\lambda_1 \ll \lambda_2$}{lambda1 << lambda2}}

  When $t \ll 1$, $\Dkl = (1+o(1)) \frac{1}{t}$, we will show that \Cref{asm:renyi-stable-lower} holds.

  Let $\alpha_n = 1 + \frac{c}{\Dkl}$, where $c \in (0,\frac 12)$ is a constant, then
  \begin{equation}
    \begin{aligned}
      \frac 12|h''(\xi)| \le \frac{(t - 1)^2}{2\left( t - \delta |t - 1| \right)^2} &\le \frac{1}{2\left( t - \delta |t - 1| \right)^2} = \frac{1}{2\left( (1-c+o(1)) t \right)^2}\\
      &\le (1+o(1)) \frac{1}{t^2} = (1+o(1)) \left[ \Dkl \right]^2.
    \end{aligned}
  \end{equation}
  Therefore, there exists a constant $d > 0$ such that
  \begin{equation}
    \begin{aligned}
      \Dalpha &\le \Dkl + d \Dkl^2 \cdot (\alpha-1).\\
    \end{aligned}
  \end{equation}
	  For any constant $\theta > 0$, we can set $c = \min\{\theta/d,1/4\}$, then
  \begin{equation}
    \Dalpha \le (1+\theta) \Dkl.
  \end{equation}

\subsection{Distributions with $D_{2}= (1+ o(1)) D_{\mathrm{KL}}$}\label{sec:12renyi-check}
We show that for $\+P=\-{Bern}(1-o(1)),\ \+Q=\-{Bern}(o(1))$, and $\+P=\-{Exp}(\lambda_1),\ \+Q=\-{Exp}(\lambda_2)$ with $\lambda_1 \gg \lambda_2$, we have $D_{2}= (1+ o(1)) D_{\mathrm{KL}}$.

\subsubsection{$\+P=\-{Bern}(1-o(1)),\ \+Q=\-{Bern}(o(1))$}
Intuitively, this can be seen as an extension to the planted \Erdos-\Renyi random graph model. Roughly speaking, in a standard planted model, the planted graph will always be a subgraph (corresponding to $\+P=\-{Bern}(1)$), while in our setting, we have to also consider the planted subgraph may undergo a random edge removal process.
So an AoN in this case can be seen as an extension to~\cite{mossel2023sharp}.

We verify $D_{2}= (1+ o(1)) D_{\mathrm{KL}}$ next.

\begin{equation}
\begin{aligned}
D_{\mathrm{KL}}(\mathrm{Bern}(p) || \mathrm{Bern}(q)) &= p \ln \left( \frac{p}{q} \right) + (1-p) \ln \left( \frac{1-p}{1-q} \right) = - (1+o(1)) \log q,\\
D_2(\mathrm{Bern}(p) || \mathrm{Bern}(q)) &= \ln \left( \frac{p^2}{q} + \frac{(1-p)^2}{1-q} \right) = - (1+o(1)) \log q.
\end{aligned}
\end{equation}

\subsubsection{$\+P=\-{Exp}(\lambda_1),\ \+Q=\-{Exp}(\lambda_2)$ with $\lambda_1 \gg \lambda_2$}
This setting was studied previously for almost exact recovery of planted matchings (\cite{moharrami2021planted,ding2023planted}) and spanning trees and Hamiltonian paths/cycles (\cite{moharrami2025planted}).

We verify $D_{2}= (1+ o(1)) D_{\mathrm{KL}}$ next.
\begin{equation}
\begin{aligned}
D_{\mathrm{KL}}(\mathrm{Exp}(\lambda_1) \| \mathrm{Exp}(\lambda_2)) &= \ln \left( \frac{\lambda_1}{\lambda_2} \right) + \frac{\lambda_2}{\lambda_1} - 1 = \ln \left( \frac{\lambda_1}{\lambda_2} \right)-1+o(1),\\
D_2(\mathrm{Exp}(\lambda_1) \| \mathrm{Exp}(\lambda_2)) &= \ln \left( \frac{\lambda_1^2}{2\lambda_1\lambda_2 - \lambda_2^2} \right) = \ln \left( \frac{\lambda_1}{\lambda_2} \right) - \ln(2) + o(1).
\end{aligned}
\end{equation}
Therefore, the Bernoulli regime above and the exponential regime with $\lambda_1 \gg \lambda_2$ both satisfy $D_{2}= (1+ o(1)) D_{\mathrm{KL}}$.

\subsection{\texorpdfstring{Assumption 2 of \cite{ding2020consistent}}{[DWXY20]} implies our \texorpdfstring{\Cref{asm:renyi-stable-upper}}{assumption}}

\renewcommand{\Dkl}{D_{\mathrm{KL}}\left( \+P \| \+Q \right)}
\renewcommand{\Dalpha}{D_\alpha\left( \+P \| \+Q \right)}

In Assumption 2 of \cite{ding2020consistent}, the authors stated the following assumption (with notation slightly modified to avoid conflicts):
\begin{assumption}[Assumption 2 in \cite{ding2020consistent}]
  There exists an absolute constant $c > 0$ such that for all $\theta \in [0,1]$,
  \begin{equation}
    E_P\left((1 - \theta) D(P_n | Q_n)\right) \geq c \theta^2 D(P_n | Q_n),
  \end{equation}
\end{assumption}
where $E_P$ is the Legendre transform defined in~\cref{eq:log-mgf-legendre}, and $D(P_n|Q_n)$ denotes $D_{\mathrm{KL}}(P_n\|Q_n)$ in the notation of~\cite{ding2020consistent}. For $\tau=(1-\theta)\Dkl \in [0,\Dkl]$, convexity gives $E_P(\tau)=\sup_{\lambda\le0}\{\lambda\tau-\psi_P(\lambda)\}$, and Jensen's inequality rules out $\lambda<-1$ from improving over the value at $\lambda=0$. Hence $E_P(\tau)=\sup_{\lambda\in[-1,0]}\{\lambda\tau-\psi_P(\lambda)\}$. Therefore, by this assumption, after reducing $c$ by a constant factor if necessary, for any $\theta \in [0,1]$ there exists a $\lambda \in [-1,0)$ satisfying
\begin{equation}
  \lambda \left((1 - \theta) \Dkl - D_{1+\lambda}(\+P \| \+Q)\right) \geq c \theta^2 \Dkl.
\end{equation}
By the monotonicity of \Renyi divergence in its order and $\lambda<0$, we have $\Dkl \ge D_{1+\lambda}(P_n \| Q_n) > (1 - \theta) \Dkl$. Moreover,
\begin{equation}
  -\lambda \ge \frac{c \theta^2 \Dkl}{\theta \Dkl} = c \theta.
\end{equation}
For any fixed $\theta_0>0$, set $\theta_n=\Dkl^{-1/2}$ and let $\lambda_n<0$ be the order shift obtained above with $\theta=\theta_n$. Let $\alpha_n=1-(c/2)\theta_n$. Then $\alpha_n\to 1$, $1-\alpha_n=\omega(1/\Dkl)$, and $\alpha_n\ge 1+\lambda_n$ for all sufficiently large $n$. By monotonicity of \Renyi divergence,
\begin{equation}
  D_{\alpha_n}(\+P\|\+Q)\ge D_{1+\lambda_n}(\+P\|\+Q)\ge (1-\theta_n)\Dkl\ge (1-\theta_0)\Dkl
\end{equation}
for all sufficiently large $n$. This implies that \Cref{asm:renyi-stable-upper} holds.

\section{Almost all hidden edges by top-\texorpdfstring{$m$}{m} selection for sufficiently large \texorpdfstring{$\+H$}{H}}
\label{sec:topm}

\begin{lemma}\label{lem:topm}
  Suppose \Cref{asm:renyi-stable-upper} holds. When $\log (p_{1M}^{-1}) = (1 + o(1)) \log (N/m) = \omega(1)$ and $D_{\mathrm{KL}}(\+P \| \+Q) = (1+\eta) \log (p_{1M}^{-1})$ for some constant $\eta > 0$, selecting the top $m$ edges with the highest likelihood ratios will, with high probability, output an $m$-edge subset with more than $(1 - o(1))m$ edges belonging to the hidden graph $H^*$.
\end{lemma}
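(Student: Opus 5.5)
Since the top-$m$ set $\hat S$ consists of the $m$ edges with the largest log-likelihood ratios $L_e$, I would reduce the claim to a threshold comparison. Fix a threshold $\tau=(1+\eta/4)\log(p_{1M}^{-1})$, which is the same choice as in the proof of \Cref{lem:almost-upper}. Let $B_{\mathrm{in}}$ be the number of hidden edges $e\in H^*$ with $L_e\le\tau$, and let $B_{\mathrm{out}}$ be the number of non-hidden edges with $L_e>\tau$.

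The key observation is deterministic. If $B_{\mathrm{in}}\le\delta m$ and $B_{\mathrm{out}}\le\delta m$, then at least $(1-\delta)m$ hidden edges exceed $\tau$. At most $\delta m$ other edges exceed $\tau$. Hence every edge above $\tau$ lies in $\hat S$ unless more than $m$ edges exceed $\tau$. In either case, the number of hidden edges in $\hat S$ is at least $(1-\delta)m-B_{\mathrm{out}}\ge(1-2\delta)m$. So it suffices to show $B_{\mathrm{in}},B_{\mathrm{out}}=o(m)$ with high probability, and then apply Markov's inequality to each expectation.

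For $B_{\mathrm{out}}$, the one-edge Chernoff bound from \cref{eq:xy-chernoff} with $\lambda=1$ gives $\Pr_{\+Q}[L_e>\tau]\le e^{-\tau}$, since $\psi_Q(1)=0$. Therefore $\mathbb E[B_{\mathrm{out}}]\le N e^{-\tau}=N\,p_{1M}^{1+\eta/4}$. Using $\log(p_{1M}^{-1})=(1+o(1))\log(N/m)$, this equals $m\,(N/m)^{-\eta/4+o(1)}=o(m)$, because $\log(N/m)=\omega(1)$. This is the step where sufficient largeness is used: the union over all $N$ non-hidden edges is paid by $N/m$, not by $p_{1M}^{-1}$.

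For $B_{\mathrm{in}}$, I would use \Cref{asm:renyi-stable-upper}, exactly as in \cref{eq:Ep-lower-bound}. Choose $\lambda=-\frac{1}{\epsilon_n\log(p_{1M}^{-1})}$ so that $D_{1+\lambda}\ge(1+\eta/2)\log(p_{1M}^{-1})$. This gives $\Pr_{\+P}[L_e\le\tau]\le e^{-E_P(\tau)}\le e^{-\eta/(4\epsilon_n)}=o(1)$. Hence $\mathbb E[B_{\mathrm{in}}]=o(m)$.

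I expect the main obstacle to be checking that the order $1+\lambda$ is admissible. We need $1-\alpha_n=\omega(1/D_{\mathrm{KL}})$ together with $D_{\mathrm{KL}}=\Theta(\log(p_{1M}^{-1}))$. This is what makes $|\lambda|\log(p_{1M}^{-1})\to\infty$ for some $\epsilon_n\to0$, and in turn what makes $E_P(\tau)\to\infty$. This verification mirrors the argument in \Cref{lem:almost-upper} and involves no combinatorial union bound.

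Finally, Markov's inequality with $\delta_n=\sqrt{(\mathbb E B_{\mathrm{in}}+\mathbb E B_{\mathrm{out}})/m}\to0$ gives $\Pr[\max(B_{\mathrm{in}},B_{\mathrm{out}})>\delta_n m]\to0$. Combined with the deterministic observation, this shows $|\hat S\cap H^*|\ge(1-2\delta_n)m$ with high probability.
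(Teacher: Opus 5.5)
Your proposal is correct and follows essentially the same route as the paper. It uses the same threshold $\tau=(1+\eta/4)\log(p_{1M}^{-1})$, the same per-edge Chernoff bounds from \cref{eq:Ep-lower-bound} for the misclassified hidden and non-hidden edges, and Markov's inequality; your deterministic top-$m$ reduction and your check that $1-\alpha_n=\omega(1/D_{\mathrm{KL}})$ forces $\epsilon_n\to0$ simply make explicit steps the paper leaves implicit.
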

\begin{proof}
Recall that in \Cref{lem:almost-upper}, let $\+X$ denote the random variable $\log \frac{\mathrm{d}\+P}{\mathrm{d}\+Q}$ under the distribution $\+P$, and $\+Y$ denote the same quantity under the distribution $\+Q$. Let $X_1, \ldots, X_m$ be i.i.d. samples from $\+X$, and $Y_1, \ldots, Y_{N-m}$ be i.i.d. samples from $\+Y$. For any threshold $\tau$, define $X = \sum_{i=1}^m \mathbb{I}[X_i \le \tau]$ and $Y = \sum_{j=1}^{N-m} \mathbb{I}[Y_j \ge \tau]$. If there exists a threshold $\tau$ such that, with high probability, $X = Y = o(m)$, then it implies that among the $N$ values, the top $m$ largest values contain at least $(1 - o(1))m$ samples from the $\+P$ distribution, i.e., from the hidden graph $H^*$.  

By \Cref{eq:xy-chernoff} we have $\Pr \left[ X_i \le \tau \right] \le e^{-E_P(\tau)}, \quad \Pr \left[ Y_i \ge \tau \right] \le e^{-E_Q(\tau)}$, which implies $\mathbb{E}[X] \le m e^{-E_P(\tau)}$, $\mathbb{E}[Y] \le (N - m) e^{-E_Q(\tau)}$.

By \Cref{eq:Ep-lower-bound}, when $D_{\mathrm{KL}}(\+P \| \+Q) = (1+\eta) \log (p_{1M}^{-1})$ there exists a $\tau$ such that $E_P(\tau) \ge r_n$ for some $r_n \to \infty$ and $E_Q(\tau) \ge (1 + \eta/4) \log(p_{1M}^{-1})$.
Moreover, when $\log(p_{1M}^{-1}) = (1+o(1)) \log (N/m)$, we have 
\begin{equation}
  \begin{aligned}
    \mathbb{E}[X] &\le m e^{-r_n},\\
    \mathbb{E}[Y] &\le (N - m) e^{-(1+\eta/4) \log(p_{1M}^{-1})} \le (N-m) \cdot (m/N)^{1+\eta/4+o(1)}.
  \end{aligned}
\end{equation}

Since $\mathbb{E}[X] = o(m)$ and $\mathbb{E}[Y] = o(m)$, by Markov's inequality, for any fixed $\gamma > 0$ we have
\begin{equation}
  \begin{aligned}
    \Pr[X \ge \gamma m] &\le \frac{\mathbb{E}[X]}{\gamma m} = o(1),\\
    \Pr[Y \ge \gamma m] &\le \frac{\mathbb{E}[Y]}{\gamma m} = o(1).
  \end{aligned}
\end{equation}

Therefore, with high probability, we have $X = o(m)$ and $Y = o(m)$, which implies that the top $m$ largest values contain at least $(1 - o(1))m$ samples from the $\+P$ distribution, i.e., from the hidden graph $H^*$.
\end{proof}

\section{Tightness of the Partial Recovery Lower Bound}
\label{sec:partial-tightness}

{\noindent\textbf{Proof of \Cref{thm:non-aon}}.}

For any constant $0 < \lambda < 1$, take $\+H$ to be the collection of all $m$-edge subgraphs of $K_n$ with $m=o(N)$ and $m\to\infty$. Let $\+P = \mathrm{Bern}(\lambda),\+Q = \mathrm{Bern}\left(\frac{(1-\lambda) m}{N-m}\right)$. We will demonstrate that this distribution is tight for the partial recovery lower bound in \Cref{sec:partial-lower-bound}.

\begin{figure}[h!]
\centering
\begin{tikzpicture}[scale=1.2]
\definecolor{lightred}{RGB}{255,200,200}
\definecolor{verylightred}{RGB}{255,230,230}
\definecolor{lightgreen}{RGB}{230,245,230}
\definecolor{verylightgreen}{RGB}{238,248,238}
\definecolor{lightblue}{RGB}{230,230,255}

\pgfmathsetmacro{\X}{6.3}
\pgfmathsetmacro{\Y}{2.7}
\pgfmathsetmacro{\A}{3.9}
\pgfmathsetmacro{\N}{0.07}

\coordinate (O) at (0,0);
\coordinate (X) at (\X,0);
\coordinate (Y) at (0,\Y);
\coordinate (A) at (\A,0);
\coordinate (N) at (\N,0);

\pgfmathsetmacro{\Factor}{\N/\A}
\coordinate (D) at ($(O)!\Factor!(\A,\Y)$);

\fill[lightred] (O) rectangle ($(N) + (Y)$);
\fill[verylightred] (D) -- ($(A) + (Y)$) -- ($(N) + (Y)$) -- cycle;

\fill[lightgreen] (A) rectangle ($(X) + (Y)$);

\fill[verylightgreen] (N) -- (D) -- ($(A) + (Y)$) -- (A) -- cycle;

\draw[dashed] (A) -- ($(A) + (Y)$);
\draw (N) -- (D) -- ($(A) + (Y)$) -- ($(X) + (Y)$);

\pgfmathsetmacro{\U}{1.3}
\node at ($(0.5*\A+0.5*\X,\U)$) {All};
\node at ($(0.4*\A,\Y-0.4)$) {Impossible region};
\node at ($(0.5*\A+0.5*\X,\Y-0.4)$) {Possible region};

\draw[->] (O) -- ($(X) + (0.5, 0)$) node[right] {$D_{\mathrm{KL}}$};
\draw[->] (O) -- ($(Y) + (0, 0.5)$) node[above, align=center] {Target edge recovery fraction};

\draw (\N,0.1) -- (\N,-0.1);
\draw (\A,0.1) -- (\A,-0.1);
\draw (0.1,\Y) -- (-0.1,\Y);

\node[below, align=center] at (N) {$\kappa_2$\\($D_2(\mathcal{P}\|\mathcal{Q}) = \log(p_{1M}^{-1})$)};
\node[below] at (A) {$\log(p_{1M}^{-1})$};

\node[left] at (Y) {1};
\node[left] at (O) {0};
\end{tikzpicture}
\caption{Asymptotic visualization of \Cref{thm:non-aon}. Here $\kappa_2$ denotes the $D_{\mathrm{KL}}$-coordinate at which $D_2(\+P\|\+Q)=\log(p_{1M}^{-1})$; in this construction, $\kappa_2$ is close to the origin.}
\label{fig:tikz-noaon}
\end{figure}

First we calculate the KL divergence:
\begin{equation}
  D_{\mathrm{KL}}(\+P \| \+Q) = \lambda \log \frac{\lambda}{\frac{(1-\lambda) m}{N-m}} + (1-\lambda) \log \frac{1-\lambda}{1 - \frac{(1-\lambda) m}{N-m}} = (1+o(1)) \lambda \log (N/m).
\end{equation}
This distribution also satisfies \Cref{asm:renyi-stable-lower}, which is proven in \Cref{sec:renyi-stability-bern-hard}.

Next, we demonstrate that it is possible to recover a $\lambda + o(1)$ fraction of edges in the hidden $m$-subgraph model, which satisfies the uniformly sparse condition and it is easier to analyze because under this model, any subgraph with $m$ edges is a valid hidden subgraph.

In the $m$-subgraph model, the inference task can be described as follows:
let $X_1, \dots, X_m$ be i.i.d. samples drawn from distribution $\+P$, and $Y_1, \dots, Y_{N - m}$ be i.i.d. samples drawn from distribution $\+Q$. Given the mixture of these $N$ samples (with their indices randomly permuted), the goal is to select a subset of $m$ indices such that at least a $\lambda + o(1)$ fraction of the selected elements originate from the $\+P$-distributed set.

Define $X := \sum_{i=1}^m X_i$, $Y := \sum_{j=1}^{N-m} Y_j$. Then $\mathbb{E}[X] = \lambda m$, $\mathbb{E}[Y] = (1 - \lambda)m$. We consider the following recovery strategy:
\begin{itemize}
  \item If the total number of one-indices (i.e., $X + Y$) is less than $m$, we select all available one-indices and fill the remaining $m - (X + Y)$ positions arbitrarily with zero-indices. In this case, the number of correctly recovered $\+P$-samples is at least $X$.

  \item If $X + Y \geq m$, we select any $m$ one-indices. Since at most $Y$ one-indices come from $\mathcal{Q}$, the number of correctly recovered indices is at least $m - Y$.
\end{itemize}

Hence, in either case, the number of correctly recovered indices is at least $\min(X, m - Y)$. Using Chernoff bound, we have
\begin{equation}
  \begin{aligned}
    \Pr[|X - \mathbb{E}[X]| \ge \delta \mathbb{E}[X]] &\le 2 \exp\left(-\frac{\delta^2}{3} \cdot \mathbb{E}[X]\right),\\
    \Pr[|Y - \mathbb{E}[Y]| \ge \delta \mathbb{E}[Y]] &\le 2 \exp\left(-\frac{\delta^2}{3} \cdot \mathbb{E}[Y]\right).
  \end{aligned}
\end{equation}
Setting $\delta = m^{-1/3}$, we have
\begin{equation}
  \Pr[\min(X, m - Y) \le \lambda m - m^{2/3}] \le 4 \exp\left(-\frac{\min(\lambda, 1-\lambda)}{3} \cdot m^{1/3}\right).
\end{equation}
This implies that we can recover a $(1 + o(1))\lambda$ fraction of the hidden $m$-subgraph with high probability. Moreover, \Cref{sec:partial-lower-bound} shows that no fixed fraction strictly larger than $\lambda$ can be recovered with high probability under this scaling.

\section{Expectation threshold for \texorpdfstring{$2k$}{2k}-NN graphs}
\label{sec:pc-2knn}

\cite{ding2020consistent} established the almost exact recovery threshold for the $2k$-nearest-neighbor ($2k$-NN) graph. As noted in \Cref{tbl:p1}, the $2k$-NN graph is first-moment flat, which makes \Cref{thm:main} applicable. Here we formally verify first-moment flatness by proving an upper bound on its expectation threshold.

We provide an upper bound on the stronger notion of the ``modified subgraph expectation threshold''. Let $M_{H',H}$ denote the number of subgraphs of $H$ which are isomorphic copies of $H'$, define 
\begin{equation}
\tilde{p}_E(H) = \min \left\{ p : \mathbb{E} M_{H', G(n, p)} \geq \frac{M_{H', H}}{2} \text{ for all edge induced subgraph } H' \subseteq H \right\}.
\end{equation}

By definition, it is clear that $p_E \le \tilde{p}_E$, in fact, ~\cite{mossel2022second} established that $p_c(H) \leq L \tilde{p}_E(H) \log e(H)$ for a universal constant $L$.

In the remainder of this section, we follow the same convention and use ``subgraph'' to only refer to \emph{edge-induced} subgraphs associated with a subset of edges, and simply write $H'\subseteq H$. In particular, $H'$ does not contain isolated vertices.

\begin{lemma}\label{lem:PC-bound}
For $\Delta = n^{o(1)}$ and a graph $H$ of maximum degree $\Delta$,
we have:
\begin{equation}
\tilde{p}_E(H) \le n^{-(1+o(1))\min_{H' \subseteq H}\frac{v(H')-c(H')}{e(H')}}.
\end{equation}
where $v(H'),e(H'),c(H')$ denote the number of vertices, the number of edges, and the number of connected components in $H'$.
\end{lemma}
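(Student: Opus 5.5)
We need, for each edge-induced $H'\subseteq H$, to show that at $p = n^{-(1+\epsilon)\rho}$, where $\rho := \min_{H'\subseteq H}\frac{v(H')-c(H')}{e(H')}$ and $\epsilon=o(1)$ is chosen suitably, we have $\mathbb E M_{H',G(n,p)} \ge M_{H',H}/2$. Then $\tilde p_E(H)\le p$ by definition. So the plan is to lower bound the left side and upper bound the right side for each $H'$, and compare the two.

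\textbf{Lower bound on $\mathbb E M_{H',G(n,p)}$.} The expected count is $M_{H',K_n}\,p^{e(H')}$. The number of copies of $H'$ in $K_n$ is $\frac{(n)_{v(H')}}{|\mathrm{Aut}(H')|}$. Since $v(H')\le 2e(H')$ and $v(H') = o(n)$ in the relevant range (otherwise we use $(n)_v\ge (n/e)^v$), we get
\[
(n)_{v}\ge (n/e)^{v}.
\]
We also need $|\mathrm{Aut}(H')|\le \Delta^{O(v(H'))}\cdot (\text{component permutations})$. More carefully, an automorphism is determined by the images of one root per component together with a BFS extension, each step having at most $\Delta$ choices. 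This gives
\[
|\mathrm{Aut}(H')|\le c(H')!\, v(H')^{c(H')}\,\Delta^{v(H')}.
\]
Hence $\mathbb E M_{H',G(n,p)} \ge n^{v(H')}\,p^{e(H')}\,(e\Delta)^{-v(H')}\,\bigl(c!\,v^c\bigr)^{-1}$.

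\textbf{Upper bound on $M_{H',H}$.} We count embeddings of $H'$ into $H$ component by component. For each component, choose the image of a root vertex in at most $v(H)\le n$ ways, then extend along a spanning tree, each new vertex having at most $\Delta$ choices. This gives
\[
M_{H',H}\le n^{c(H')}\,\Delta^{v(H')}.
\]
Dividing the two bounds, it suffices to have
\[
n^{v(H')-c(H')}\,p^{e(H')} \;\ge\; 2\,(e\Delta)^{2v(H')}\,c!\,v^{c}.
\]
The $c!\,v^c$ factor is problematic. To handle it, I would count unordered embeddings more carefully: the copies of $H'$ in $K_n$ with isomorphic components permuted cancel against the same symmetry factor in the count inside $H$. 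Equivalently, I would compare labelled embedding counts, since the $|\mathrm{Aut}(H')|$ factor appears in both $M_{H',K_n}$ and $M_{H',H}$ and cancels exactly. Doing this gives
\[
M_{H',K_n} = \frac{(n)_v}{|\mathrm{Aut}|},\qquad M_{H',H}\le \frac{n^{c}\,\Delta^{v}}{|\mathrm{Aut}|}\ \text{(as labelled maps)}.
\]
The requirement then becomes $(n/e)^{v}\,p^{e}\ge 2\,n^{c}\,\Delta^{v}$.

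\textbf{Choosing $\epsilon$.} Plugging in $p=n^{-(1+\epsilon)\rho}$ and using $e(H')\rho\le v-c$, we get
\[
(n/e)^{v}\,p^{e}\,n^{-c} \;\ge\; n^{-\epsilon(v-c)}\,e^{-v}.
\]
So we need $n^{-\epsilon(v-c)}\ge 2(e\Delta)^{v}$. This would need $\epsilon$ negative, which is fine: it shows the threshold is at most $n^{-\rho}$ times $(e\Delta)^{v/e}$-type factors. The correct formulation is therefore to take $p = n^{-\rho}(2e\Delta)^{2}$-ish. Since $v(H')\le 2e(H')$ and $v-c\ge v/2$ (each component has at least 2 vertices), the extra factor per edge is at most $(2e\Delta)^{2}=n^{o(1)}$. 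Since $\rho\ge$ a quantity of order $1/\Delta$, and in the $2k$-NN application $\rho\approx 1/k$ with $k = n^{o(1)}$... hmm, more precisely we need $\log\Delta = o(\rho\log n)$. This holds whenever $\rho\log n/\log\Delta\to\infty$, which is what lets us write the bound as $n^{-(1+o(1))\rho}$.

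\textbf{Main obstacle.} I expect the delicate points to be two: the cancellation of automorphism factors, and making the $n^{o(1)}$ slack genuinely $o(\rho\log n)$ in the exponent. For the latter, I would use $\rho\ge 1/\Delta$. This follows because each component with $v_i$ vertices has at most $\Delta v_i/2$ edges, so
\[
\frac{v-c}{e}\ge \frac{v/2}{\Delta v/2}=\frac1\Delta .
\]
I would need $\Delta\log\Delta=o(\log n)$, which is stronger than $\Delta=n^{o(1)}$. If this fails, I would fall back on bounding the embedding extension factor by the actual degree, to try to sharpen the argument.
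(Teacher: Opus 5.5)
Your setup is the same as the paper's: count labelled embeddings so that the $\mathrm{Aut}(H')$ factors cancel, bound the embeddings of $H'$ into $H$ by rooting a spanning forest ($n$ choices per root, $\Delta$ per further vertex), and use $(n)_v\ge (n/e)^v$. You also reach the right sufficient condition, $n^{-\epsilon(v-c)}\ge 2(e\Delta)^{v}$ for every $H'\subseteq H$, where $v=v(H')$ and $c=c(H')$.

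The gap is in how you then discharge this condition. You convert the slack into a per-edge factor via $v\le 2e(H')$ and get $\tilde p_E(H)\le n^{-\rho}(2e\Delta)^2$. That is an \emph{additive} error of order $\log\Delta/\log n$ in the exponent, which is $o(\rho)$ only when $\log\Delta=o(\rho\log n)$. Since $\rho$ can be of order $1/\Delta$ (e.g.\ if $H$ has a $K_{\Delta+1}$ component, or for the $2k$-NN graph where $\rho\approx 1/k$), you end up needing $\Delta\log\Delta=o(\log n)$. This is strictly stronger than $\Delta=n^{o(1)}$, and it already fails in the paper's $2k$-NN application with, e.g., $k=\log n/\log\log n$. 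Your fallback of using actual degrees does not fix this, so as written the argument does not prove the lemma.

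The missing observation is that the slack scales with $v-c$, not with $e(H')$. Since $H'$ has no isolated vertices, each component has at least two vertices, so $v\le 2(v-c)$; also $v-c\ge 1$. Hence $2(e\Delta)^{v}\le (2e^{2}\Delta^{2})^{v-c}$. Writing $\epsilon=-\epsilon'$, your condition $n^{\epsilon'(v-c)}\ge 2(e\Delta)^v$ then holds for all $H'$ simultaneously as soon as $\epsilon'\ge \log(2e^{2}\Delta^{2})/\log n$, which is $o(1)$ exactly because $\Delta=n^{o(1)}$. This gives $\tilde p_E(H)\le n^{-(1-\epsilon')\rho}$, a \emph{relative} error that does not depend on how small $\rho$ is.

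The paper does the same thing in per-$H'$ form. It writes the exponent as $-\frac{v-c}{e(H')}\bigl(1-\frac{\log\Delta}{\log n}\bigr)+\frac{v}{e(H')\log n}$ and uses $v\le 2(v-c)$ to see that both error terms are $o(1)\cdot\frac{v-c}{e(H')}$, before taking the maximum over $H'$.
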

\begin{proof}
It suffices to bound $\tilde{p}_E(H)$.
Noting that $\mathbb{E} M_{H', G(n, p)} = M_{H',K_n} \cdot p^{e(H')}$ where $K_n$ is the complete graph, we have
\begin{equation}
\begin{aligned}
\tilde{p}_E(H) &= \min \left\{p: \frac{2M_{H',K_n}\cdot p^{e(H')}}{M_{H',H}} \ge 1 \text{ for all } H' \subseteq H \right\}\\
&= \max_{H' \subseteq H} \tp{\frac{M_{H',H}}{2M_{H',K_n}}}^{\frac{1}{e(H')}}\\
&= \max_{H' \subseteq H} \tp{\frac{\mathrm{Aut}(H') M_{H',H}}{2\cdot(n)_{{v(H')}}}}^{\frac{1}{e(H')}}.
\end{aligned}
\end{equation}

Where the term $\mathrm{Aut}(H')$ represents the number of automorphisms of the graph $H'$. An automorphism of a graph $H'$ is a bijection $\phi: V(H') \to V(H')$ that preserves adjacency, meaning that for any two vertices $u, v \in V(H')$, we have $(u, v) \in E(H')$ if and only if $(\phi(u), \phi(v)) \in E(H')$.

We need to find an upper bound for $\frac{\mathrm{Aut}(H') M_{H',H}}{2n^{{v(H')}}}$. By definition,
\begin{equation}
\begin{aligned}
M_{H',H} &= \frac{1}{\mathrm{Aut}(H')} \sum_{\phi:V(H')\hookrightarrow V(H)} \mathbb I[(\phi(u),\phi(v)) \in H,\quad \forall (u,v) \in H'].
\end{aligned}
\end{equation}

Recall that $c(H')$ is the number of connected components of $H'$. Fix any maximal spanning forest in $H'$, so that each connected component has a canonical spanning tree. We note that the spanning tree for a connected component of $H'$ is  also a spanning tree for the induced subgraph of $H$ on the same set of vertices.
For each spanning tree, the first vertex (by lexicographical order) can be mapped to at most $n$ choices under $\phi$. Then, we traverse the spanning tree in a BFS order. Every other vertex of the spanning tree can have at most $\Delta$ choices, because the maximum degree in $H$ is $\Delta$.
Therefore, we have:
\begin{equation}
\mathrm{Aut}(H') M_{H',H} = \sum_{\phi:V(H')\hookrightarrow V(H)} \mathbb I[(\phi(u),\phi(v)) \in H,\quad \forall (u,v) \in H'] \le n^{c(H')} \Delta^{v(H') - c(H')}.
\end{equation}

Then
\begin{equation}
\begin{aligned}
\tilde{p}_E(H) &= \max_{H' \subseteq H} \tp{\frac{\mathrm{Aut}(H') M_{H',H}}{2 \cdot (n)_{{v(H')}}}}^{\frac{1}{e(H')}}\\
&\le \max_{H' \subseteq H} \tp{\frac{n^{c(H')} \Delta^{v(H')-c(H')}}{2\cdot (n)_{{v(H')}}}}^{\frac{1}{e(H')}}\\
&\le \max_{H' \subseteq H} \tp{\frac{n^{c(H')} \Delta^{v(H')-c(H')} }{(n/e)^{v(H')}}}^{\frac{1}{e(H')}}\\
&= \max_{H' \subseteq H} n^{\frac{c(H') - v(H')}{e(H')}\cdot \tp{1-\frac{\log \Delta}{\log n}} + \frac{v(H')}{e(H')\log n}}\\
&= n^{-(1+o(1))\min_{H' \subseteq H}\frac{v(H')-c(H')}{e(H')}}.
\end{aligned}
\end{equation}
The third line follows from Stirling approximation; and the last line follows from noting that $\frac{\log \Delta}{\log n}=o(1)$, and $v(H') \le 2(v(H')-c(H'))$ for $H'$ without isolated points.
\end{proof}

\begin{lemma}\label{lem:PC-bound-kNN}
  For $2k$-NN graph $H$ (every vertex in the graph has a degree of exactly $2k$, $k$ on the left and $k$ on the right), we have:
  \begin{equation}
  \frac{v(H')-c(H')}{e(H')} \ge \frac{1}{k} - O\tp{\frac 1n} \quad \forall H' \subseteq H.
  \end{equation}
  \end{lemma}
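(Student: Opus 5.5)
Since $e(H')\ge 1$, it suffices to show $e(H') \le k\,(v(H')-c(H')) + O(k)\cdot(\text{something bounded})$, and more precisely that every connected component $C$ of $H'$ satisfies $e(C) \le k\,(v(C)-1)$, with a single possible exception of $O(k^2)$ extra edges when $C$ wraps around the whole ring. Summing over components then gives $e(H') \le k(v(H')-c(H')) + O(k^2)$. Dividing, $\frac{v(H')-c(H')}{e(H')} \ge \frac1k - \frac{O(k)}{e(H')}$. When $e(H')$ is large, comparable to $nk$, this is $\frac1k - O(\frac1n)$. When $e(H')$ is small, the exceptional wrap-around component cannot occur, since that component needs at least about $n/k$ vertices, hence about $n/k$ edges. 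So the error term appears only when $e(H') = \Omega(n/k)$. I would then check that the resulting loss is $O(1/n)$. If the constant depends on $k$, I would accept $O_k(1/n)$, or refine the wrap-around count so that the excess is at most $k$ edges relative to $k\,v(C)$. In the latter case we get $e(C)\le k\,v(C) = k(v(C)-1)+k$, and the loss is $\frac{k}{k\,e(H')}\cdot\frac{1}{\cdot}$, giving $\frac{v-c}{e}\ge \frac{e/k-1}{e} = \frac1k-\frac1e$ with $e\ge n/k$. That last bound is only $O(k/n)$, so I would aim for the sharper per-component statement.

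The key per-component bound uses the ordering of vertices along the ring. First take a component $C$ that does not wrap, i.e.\ whose vertex set sits inside an arc where we can cut the cycle. Order its vertices linearly $u_1<\dots<u_{v}$ along that arc. Charge each edge to its right endpoint. A vertex $u_j$ has at most $k$ neighbours to its left in $H$, and $u_1$ has none inside $C$. Therefore $e(C)\le k(v(C)-1)$. This is exactly the bound needed for non-wrapping components.

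For a wrapping component, i.e.\ one with no gap of $k$ consecutive ring positions free of $C$-vertices, I would cut the ring at an arbitrary point. Only edges crossing the cut break the linear charging argument, and at most $\binom{k+1}{2}=O(k^2)$ ring edges cross any given cut. So $e(C)\le k(v(C)-1)+O(k^2)$. At most one component can wrap, since two wrapping components would interleave and, by the ring adjacency within distance $k$, be joined by an edge. Since $e(H')\le nk$ for any $H'$, one might hope that full saturation gives the extreme case: $H'=H$ gives $\frac{n-1}{nk}=\frac1k-\frac{1}{nk}$. This is consistent with the claimed $O(1/n)$.

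The main obstacle is controlling the wrap-around excess relative to $e(H')$ to obtain a true $O(1/n)$ rather than $O(k^2/n)$. My plan is to choose the cut point cleverly, at a position where few edges of $C$ cross, and use an averaging argument. Summed over all $n$ cut positions, each edge of length $d\le k$ crosses $d$ cuts. So some cut is crossed by at most $k\,e(C)/n$ edges of $C$. This gives $e(C)\le k(v(C)-1)+k\,e(C)/n$, hence $e(C)(1-k/n)\le k(v(C)-1)$. Combined with the non-wrapping components, this yields $\frac{v(H')-c(H')}{e(H')}\ge \frac{1-k/n}{k} = \frac1k-\frac1n$, as desired.
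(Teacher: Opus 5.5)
Your final averaging argument is correct, but it takes a different route from the paper. You cut the ring at one of the $n$ gaps and charge each non-crossing edge of a component $C$ to its right endpoint, which gives $e(C)\le k(v(C)-1)+X_t$, where $X_t$ is the number of edges of $C$ lying over gap $t$. An edge of length $d\le k$ lies over exactly $d$ gaps, so some gap has $X_t\le k\,e(C)/n$. Hence $e(C)(1-k/n)\le k(v(C)-1)$, and summing over components gives $\frac{v(H')-c(H')}{e(H')}\ge\frac1k-\frac1n$.

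The paper instead proves the extremal fact $e_{C_n^k}(S)\le k(|S|-1)$ for every nonempty proper vertex subset $S$. It does this by compressing $S$ into a consecutive block and counting edges in blocks directly, and it handles a connected spanning $H'$ separately via $e\le nk$. That buys a sharper dichotomy: the ratio is at least $\frac1k$ unless $H'$ is connected and spanning, and even then the loss is only $\frac{1}{nk}$. However, it relies on a compression step that the paper only sketches. Your double counting is shorter, fully rigorous, and needs no extremal lemma. Its weaker error $\frac1n$ is harmless, since downstream only $(1+o(1))/k$ is used and $k=o(n)$.

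Two clean-ups are needed. The averaging bound holds for every component, so drop the wrapping/non-wrapping case split, the $O(k^2)$ discussion, and the garbled estimate in your first paragraph. That is just as well, because your claim that at most one component can wrap is false. Components are those of $H'$, not of $H$, so interleaving vertex sets need not be joined by an edge of $H'$. For example, with $k\ge 2$ and $n$ even, the even vertices and the odd vertices can each carry their own cycle of length-$2$ edges.
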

  \begin{proof}
  Let $C_n^k$ denote the $2k$-NN graph on the ring. We first record a simple extremal fact: for every nonempty proper vertex subset $S\subsetneq V(C_n^k)$,
  \begin{equation}\label{eq:knn-proper-subset}
    e_{C_n^k}(S) \le k(|S|-1).
  \end{equation}
  Indeed, color the vertices of $S$ black. Among all subsets of a fixed size $s$, the number of pairs within cyclic distance at most $k$ is maximized when the vertices form a consecutive block on the ring: if there are two black segments separated by a gap, moving an endpoint vertex across an adjacent gap to join another black segment cannot decrease the number of black pairs within distance at most $k$, and iterating gives a consecutive block. For a consecutive block of size $s$, if $s\le k+1$ then
  \begin{equation}
    e_{C_n^k}(S) \le \binom{s}{2} \le k(s-1),
  \end{equation}
  while if $k+1<s\le n-k$, then
  \begin{equation}
    e_{C_n^k}(S) \le ks-\frac{k(k+1)}{2}\le k(s-1).
  \end{equation}
  Finally, if $s>n-k$, write $r=n-s$, where $1\le r<k$. Removing the complement of $S$ deletes at least $2kr-\binom r2$ edges, hence
  \begin{equation}
    e_{C_n^k}(S) \le nk-2kr+\binom r2 = ks-\left(kr-\binom r2\right)\le k(s-1).
  \end{equation}

  Fix an arbitrary $H' \subseteq H$, and abbreviate $v(H')$, $e(H')$, and $c(H')$ as $v$, $e$, and $c$. Let $C_1,\dots,C_c$ be the connected components of $H'$, and let $v_i,e_i$ denote their numbers of vertices and edges.
  If $c=1$ and $v=n$, then $e\le nk$, so
  \begin{equation}
    \frac{v-c}{e} \ge \frac{n-1}{nk} = \frac{1}{k}-O\tp{\frac1n}.
  \end{equation}
  Otherwise, the vertex set of each connected component is a nonempty proper subset of the ring. By~\cref{eq:knn-proper-subset}, $e_i\le k(v_i-1)$ for all $i$. Summing over components gives
  \begin{equation}
    e=\sum_{i=1}^c e_i \le k\sum_{i=1}^c(v_i-1)=k(v-c).
  \end{equation}
  Therefore $(v-c)/e\ge 1/k$ in this case.
\end{proof}

Combining \Cref{lem:PC-bound,lem:PC-bound-kNN} with $\Delta=2k$ gives
\begin{equation}
  \tilde p_E(H) \le n^{-(1+o(1))/k}.
\end{equation}
Since $p_E(H)\le \tilde p_E(H)$, we have
\begin{equation}
  \log(p_E(H)^{-1}) \ge (1+o(1))\frac{\log n}{k}.
\end{equation}
On the other hand, $p_E(H)\ge p_{1M}(H)$ by definition, and for the $2k$-NN graph
$p_{1M}(H)=(1+o(1))(e/n)^{1/k}$. Hence
\begin{equation}
  \log(p_E(H)^{-1})
  =
  (1+o(1))\frac{\log n}{k}
  =
  (1+o(1))\log(p_{1M}(H)^{-1}).
\end{equation}
Therefore, the $2k$-NN graph is first moment flat.

\end{document}